\theoremstyle{plain}%
  \newtheorem{theorem}{Theorem}
  \newtheorem{corollary}{Corollary}
  \newtheorem{proposition}{Proposition}
  \newtheorem{lemma}{Lemma}%                    
\theoremstyle{remark}
\theoremstyle{definition}
\newtheorem{definition}{Definition}
\newtheorem{example}{Example}
\begin{document}

\newcommand{\dunion}{\amalg}

\newcommand{\E}{{\mathcal E}}
\newcommand{\Z}{{\mathcal Z}}
\newcommand{\A}{{\mathcal A}}

\title{The Ring of Graph Invariants - Graphic Values}
\author{Tomi Mikkonen\\ \small{tomi.mikkonen@cs.tut.fi}}
\date{30th November 2007} 

\maketitle

\begin{abstract}
The ring of graph invariants is spanned by the basic graph invariants which
calculate the number of subgraphs isomorphic to a given graph in other graphs.
Sets of basic graph invariants form $G$-posets where each graph in the set
induces the corresponding invariant calculating the number of subgraphs
isomorphic to this graph in other graphs. It is well known that all other
graph invariants such as sorted eigenvalues and canonical permutations are
linear combinations of the basic graph invariants. 

These subgraphs counting invariants are not algebraically independent. In our
view the most important problem in graph theory of unlabeled graphs is the
problem of determining graphic values of arbitrary sets of graph
invariants. This corresponds to explaining the syzygy of the graph invariants
when the number of vertices is unbounded. We introduce two methods to explore
this complicated structure.

$G$-posets with a small number of vertices impose constraints on larger
$G$-posets. We describe families of inequalities of graph
invariants. These inequalities allow to loop over all values of graph
invariants which look like graphic from the small $G$-posets point of
view. The inequalities give rise to a weak notion of graphic values where the
existence of the corresponding graph is not guaranteed.

We also develop strong notion of graphic values where the existence of the
corresponding graphs is guaranteed once the constraints are satisfied by the
basic graph invariants. These constraints are necessary and sufficient for
graphs whose local neighborhoods are generated by a finite set of locally
connected graphs. The reconstruction of the graph from the basic graph
invariants is shown to be NP-complete in similarly restricted case.

Finally we apply these results to formulate the problem of Ramsey numbers as
 an integer polyhedron problem of moderate and adjustable dimension. 
\end{abstract}

\section{Introduction}
In this paper we study \emph{basic graph invariants} which count the number of
subgraphs isomorphic to $g$ in $h$, see \cite{Tomi3} for more complete
introduction. We denote by $I(g)(h)$ the number of subgraphs isomorphic to $g$
in the graph $h$. For simple graphs we use monomial
notation in $\mathbb{C}[a_{ij}]$, such that the monomial $\prod_{(i,j)\in
  E}a_{ij}$ represents the graph $(V,E)$.

For example  $I(a_{12})(a_{12}a_{23}a_{34}a_{14}a_{13})=5$ and $I(a_{12}a_{13})(a_{12}a_{23}a_{34}a_{14}a_{13})=8$. This definition does not depend on the labeling of the graphs $g$ and $h$ but only on their isomorphism classes.

Let $A$ be the adjacency matrix of the graph $h$, i.e. $a_{ij}=1$ if there is
an edge between the vertices $i$ and $j$ in $h$ and $a_{ij}=0$
otherwise. Because $h$ is a simple graph we have $a_{ij}=a_{ji}$. Let
the monomial $a_{i_1j_1}a_{i_2j_2}\cdots a_{i_dj_d}$ have the structure of $g$
i.e. the monomial contains all the variables corresponding to the edges of
$g$. Then $I(g)(h)$ is a function in the variables $a_{ij}$ and can be written explicitly as 
\begin{equation}\label{eq:eqeka}
I(g)(h)=\sum_{\rho \in S_n/\mathrm{Stab}(a_{i_1j_1}a_{i_1j_2}\cdots a_{i_dj_d})} a_{\rho(i_1)\rho(j_1)}a_{\rho(i_2)\rho(j_2)}\cdots a_{\rho(i_d)\rho(j_d)}.
\end{equation}
 We call the polynomials $I(g)(h)$ \textit{basic graph invariants of type
 $g$}. We may consider the basic graph invariants as symbolic polynomials in
 $a_{ij}$ and we often drop the second graph ($h$ in $I(g)(h)$) from the
 notation. We use the notation $I(g)$ for this symbolic polynomial, where $g$
 is some monomial in the orbit sum. 

The basic graph invariants are not algebraically independent. Let $cv(g)$
denote the number of vertices in $g$ having at least one edge connected to it
and let $|g|$ be the number edges in $g$. The product of
two basic graph invariants is the following linear combination of the basic
graph invariants:
\begin{equation}\label{eq:29}
I(g_i)I(g_j)=\sum_{k=1}^N \left( \sum_{h=1}^N (-1)^{|g_k|-|g_h|}e_{kh}e_{hi}e_{hj} \right) I(g_k),
\end{equation}
where $e_{ij}=I(g_j)(g_i)$ and the set $\{g_1,g_2,\ldots,g_N\}$ contains all
graphs with $cv(g_i)+cv(g_j)$ vertices and less. The equation remains valid if
we add all graphs to the set. This formula is originally due to V. L Mnukhin. There is also Fleischmann's product formula which gives the coefficients in the expansion \cite{Fleischmann}. In \cite{Tomi3} we study the minimal generators of the ring
of graph invariants by using these formulae.

To fully understand the structure of the ring of graph invariants it became
clear that we need a notion of \emph{$G$-posets}.

\begin{definition}
$G$-poset is a pair $(\E,G)$, where $\E$ is the set of
monomial equivalence classes/invariants with respect to $G$ in $R[x_1,\ldots,x_N]^G$ and $G\subseteq S_N$ is the permutation group acting
on $R[x_1,\ldots,x_N]$.
\end{definition}

This notion is intended to stress the sociological behavior of the monomials which
means that each monomial corresponds to a basic invariant which can
be evaluated in all other monomials. In this paper we restrict to multilinear
monomials which suffice for simple graphs. It is, however,  possible to
generalize this notion to general monomials \cite{Tomi3}.

We say that a set of monomial equivalence classes $\E$ is a \textit{complete $G$-poset} with respect to the permutation group $G$ if the following holds. 
\begin{itemize}
\item[] For all monomials $w$ appearing in the orbit sums of the invariants, all the submonomials $m\subseteq w$ appear also in some orbit sum in the $G$-poset.
\end{itemize}

For each $G$-poset $\E$ there is the corresponding $E$-\emph{transform} of $\E$
as a matrix with entries $e_{ij}=I(m_j)(m_i)$, where $m_i$, $i=1\ldots N$ are
all the monomials representing the orbit sums in the $G$-poset $\E$. 

We denote by $\E(n)$ the $G$-poset of simple graphs with $n$ vertices and by
$\E(n,d)$ we denote the $G$-poset of simple graphs with $n$ vertices and at
most $d$ edges.

\begin{example}\label{ex:aaa}
Consider the $G$-poset $\E(4)$ with the basic graph invariants $g_0=I(\emptyset)=1$, $g_1=I(a_{12})$, $g_2=I(a_{12}a_{34})$, $g_3=I(a_{12}a_{13})$, $g_4=I(a_{12}a_{23}a_{34})$, $g_5=I(a_{12}a_{13}a_{14})$, $g_6=I(a_{12}a_{13}a_{23})$, $g_7=I(a_{12}a_{23}a_{34}a_{14})$, $g_8=I(a_{12}a_{23}a_{24}a_{34})$, $g_9=I(a_{12}a_{23}a_{34}a_{14}a_{13})$, $g_{10}=I(a_{12}a_{23}a_{34}a_{14}a_{13}a_{24})$. The $E$-transform is
\begin{displaymath}
E=\left( \begin{array}{ccccccccccc}
1 & 0 & 0 & 0 & 0 & 0 & 0 & 0 & 0 & 0 & 0 \\
1 & 1 & 0 & 0 & 0 & 0 & 0 & 0 & 0 & 0 & 0 \\
1 & 2 & 1 & 0 & 0 & 0 & 0 & 0 & 0 & 0 & 0 \\
1 & 2 & 0 & 1 & 0 & 0 & 0 & 0 & 0 & 0 & 0 \\
1 & 3 & 0 & 3 & 1 & 0 & 0 & 0 & 0 & 0 & 0 \\
1 & 3 & 1 & 2 & 0 & 1 & 0 & 0 & 0 & 0 & 0 \\
1 & 3 & 0 & 3 & 0 & 0 & 1 & 0 & 0 & 0 & 0\\ 
1 & 4 & 1 & 5 & 1 & 2 & 1 & 1 & 0 & 0 & 0 \\
1 & 4 & 2 & 4 & 0 & 4 & 0 & 0 & 1 & 0 & 0 \\
1 & 5 & 2 & 8 & 2 & 6 & 2 & 4 & 1 & 1 & 0\\ 
1 & 6 & 3 & 12 & 4 & 12 & 4 & 12 & 3 & 6 & 1 \\
\end{array} \right).
\end{displaymath}
We write indices from $0$ to $10$. Thus for example $e_{9,3}=I(g_3)(g_9)=8$. 
\end{example}

The inverse $B=E^{-1}$ is easy to calculate when the $G$-poset is complete; it is defined by the elements
$b_{ij}=(-1)^{|g_i|-|g_j|}e_{ij}$, where $e_{ij}$ is the element of the
$E$. The theory of $G$-posets remains valid for arbitrary permutation groups $G$
and we also exploit this in defining the local invariants. The $E$-transform
is vital in understanding the constraints for the basic graph invariants.  

This paper is divided three sections. Section $2$ is devoted to study the
necessary constraints for the values of basic graph invariants. 

Section $3$ describes necessary and sufficient conditions for the graphic values. These
necessary and sufficient conditions apply only if we restrict to locally
finitely generated graphs. Also more accurate necessary constraints are developed for the general case.

In section $4$ we apply these results to the Ramsey numbers and show certain
invariants which are inevitably related to cliques and Ramsey invariants. In
general any extremal graph problem is of form: For a given property, generate
the graph having this property. Since all properties can be expressed as
linear combinations of the basic graph invariants and we now have results which
describe the graphic values of the basic graph invariants, we can solve all
the graph extremal problems in principle. Some properties of graphs may be of
high degree in the representation as a linear combination of the basic graph
invariants thus spoiling this attempt. 

Ramsey numbers require only low degree representation. The Ramsey number $r(k)$ is
the minimum number of vertices $n$ such that all undirected simple graphs of
order $n$ contain a clique of order $k$ or an 
independent set of order $k$. We use notation $I(K_k)$ for the number of
cliques and $I({\overline K}_k)$ for the number of independent sets. Let us
call the sum $I(K_k)+I({\overline K}_k)$ the \emph{Ramsey invariant}.
In this language the Ramsey number $r(k)$ is the smallest $n$ s.t.
\begin{equation}
I(K_k)(H)+I({\overline K}_k)(H) \geq 1 \ \ \forall\ H \in \E(n).
\end{equation}
Our approach is to find a lower bound for this graph invariant. We prove in Section \ref{sec:ram} that the invariant $I({\overline K}_k)$ can be written in terms of basic graph invariants as
\begin{equation}
I({\overline K}_k)=\sum_{A \subseteq K_k} (-1)^{|A|}{n-cv(A) \choose k-cv(A)} I(A),
\end{equation}
where $cv(A)$ denotes the number of vertices connected to the edges of the graph $A$ and $|A|$ denotes the number of edges in the graph $A$. The sum is over unlabeled subgraphs of the complete graph $K_k$.

The problem in finding the Ramsey numbers is that the $G$-poset $\E(n)$, in
which the lower bound for the Ramsey invariant should be calculated, is very
large. For instance $r(4)=18$ meaning that we should test the inequality for
all the graphs in the $G$-poset $\E(18)$ of size
$1787577725145611700547878190848$. However our results show that $\E(4)$ 
is sufficient for proving $r(3)=6$. Thus we can hope to find the value
of $r(5)$ using a much smaller $G$-poset than $\E(43)$ which is currently the
best lower bound for $r(5)$.

There are also numerous results on the graded algebra $(\mathbb{C}[a_{ij}]/\langle a_{ij}-a_{ji} \rangle)^{S_n}$
studied more or less by the tools of classical invariant theory \cite{Thiery},\cite{Bondy},\cite{Kocay},\cite{Pouzet}.

\section{Weakly Graphic Values}
Let $\E(n)$ denote the $G$-poset of graphs with $n$ vertices. In this section
we find a set of constraints for the large $G$-poset $\E(n)$ from the smaller
$G$-poset $\E(r)$, where $n\geq r$. 

Let $z$ denote the vector $[1,I(g_1),I(g_2),$ $\ldots$, $I(g_{m})]$ consisting of all basic graph invariants of the $G$-poset $\E(r)$. The vector $z$ can have only certain values in the $G$-poset $\E(n)$. It turns out that when $z$ is evaluated on the graph of the $G$-poset $\E(n)$ it satisfies the following constraints. Here and from now on we denote by $z_0=I(g_0)=I(\emptyset )=1$ the invariant of the empty graph which is the graph with $r$ vertices and no edges. 
\begin{theorem}\label{the:rajoitteet}
The $G$-poset $\E(r)$ imposes on the $G$-poset $\E(n)$, $ n \geq r$, the constraints
\begin{eqnarray} 
z \in \mathbf{Z}^{m+1} \ \mathrm{s.t.} \ z \geq 0 \\ 
(E^T)^{-1}Dz \geq 0 \\
z_iz_j-\sum_k c_{ij}^k z_k = 0, \forall \ i,j \ \mathrm{s.t.} \ cv(g_i)+cv(g_j)\leq r
\end{eqnarray}
where the coefficients $c_{ij}^k$ come from the product formula, $cv(g_i)$ is
the number of vertices of the graph $g_i$ in connection with its edges, $E$ is
the $E$-transform of the $G$-poset $\E(r)$ (containing the empty graph) and $D$ is the following diagonal matrix:
\begin{equation}
D=diag\left( {n \choose r},{n-cv(g_1) \choose r-cv(g_1)},\ldots,{n-cv(g_{m}) \choose r-cv(g_m)}\right).
\end{equation}
\end{theorem}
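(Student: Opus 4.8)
The plan is to establish each of the three constraints in turn, treating them as independent consequences of how a graph $H \in \E(n)$ restricts to induced structures on $r$-subsets of its vertices.

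First I would dispose of the integrality and nonnegativity constraint: every $z_i = I(g_i)(H)$ is a count of subgraphs, hence a nonnegative integer, so $z \in \mathbf{Z}^{m+1}$ with $z \geq 0$ is immediate. Likewise the third constraint is simply the product formula, equation~\eqref{eq:29}, evaluated on the graph $H$: since $cv(g_i)+cv(g_j) \leq r$, all graphs $g_k$ occurring in the expansion lie in $\E(r)$, so the identity $z_iz_j = \sum_k c_{ij}^k z_k$ holds verbatim with $c_{ij}^k$ the product-formula coefficients. Neither of these requires real work; the content is in the middle inequality $(E^T)^{-1}Dz \geq 0$.

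For the inequality $(E^T)^{-1}Dz \geq 0$, the key idea is to count, for each graph $g_i$ in $\E(r)$, the number $N_i$ of $r$-element vertex subsets $S \subseteq V(H)$ whose \emph{induced} subgraph $H[S]$ is isomorphic to $g_i$; this is a nonnegative integer, so the vector $N = (N_0,\ldots,N_m)^T \geq 0$. The plan is to show $N = (E^T)^{-1}Dz$, equivalently $E^T N = Dz$. To see this, fix $j$ and compute the $j$-th component of $E^T N$, namely $\sum_i e_{ij} N_i = \sum_i I(g_j)(g_i) N_i$. Summing over all $r$-subsets $S$ and grouping by the isomorphism type of $H[S]$, this equals $\sum_{S} I(g_j)(H[S])$, i.e. the total number of pairs $(S, \text{copy of } g_j \text{ inside } H[S])$. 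Now every copy of $g_j$ in $H$ spans $cv(g_j)$ vertices, and the number of $r$-subsets $S$ that contain a given such copy is $\binom{n-cv(g_j)}{r-cv(g_j)}$; hence $\sum_S I(g_j)(H[S]) = \binom{n-cv(g_j)}{r-cv(g_j)}\, I(g_j)(H) = (Dz)_j$. This yields $E^T N = Dz$, and since $E$ is invertible (it is lower triangular with $1$'s on the diagonal when $\E(r)$ is complete, as noted after Example~\ref{ex:aaa}), we get $N = (E^T)^{-1}Dz \geq 0$, as claimed.

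The main obstacle I anticipate is the careful bookkeeping in the double-counting step: one must be precise that $I(g_j)$ counts subgraphs, not induced subgraphs, so that a single $r$-subset $S$ contributes $I(g_j)(H[S])$ copies (the formula $\sum_i e_{ij} N_i$ is exactly the passage from induced counts $N_i$ to ordinary counts via the $E$-transform), and one must verify that a fixed embedded copy of $g_j$ touching $cv(g_j)$ vertices is counted in precisely $\binom{n-cv(g_j)}{r-cv(g_j)}$ of the $r$-subsets — here the isolated vertices of $g_j$, if any, play no role since $I(g_j)$ only sees the $cv(g_j)$ connected vertices. One should also confirm the edge case $r = n$, where $D$ is the identity and the inequality degenerates to $E^{-T} z \geq 0$, i.e. $N = z$ read off the single subset $S = V(H)$. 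Once the identity $E^T N = Dz$ is in hand, the nonnegativity is automatic and the theorem follows.
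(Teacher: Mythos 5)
Your proposal is correct, and for the substantive part --- the inequality $(E^T)^{-1}Dz \geq 0$ --- it takes a genuinely more direct route than the paper. The paper proves this constraint in two steps: Proposition~\ref{pro:kat} lifts every linear inequality $c^Tx\leq 0$ that is valid on all of $\E(r)$ (i.e.\ $Ec\leq 0$) to the inequality $c^TDz\leq 0$ on $\E(n)$, using Lemma~\ref{lem:suht} to sum over $r$-subsets; Proposition~\ref{the:lie} then observes that the cone of such $c$ is generated by the columns of $-E^{-1}$, so all of these lifted inequalities follow from the single matrix inequality $(E^{-1})^TDz\geq 0$. You instead exhibit $(E^T)^{-1}Dz$ directly as the vector $N$ of counts of $r$-subsets $S$ with $H[S]\cong g_i$, proving $E^TN=Dz$ by the same double count that underlies Lemma~\ref{lem:suht} (a fixed copy of $g_j$ on $cv(g_j)$ vertices lies in exactly $\binom{n-cv(g_j)}{r-cv(g_j)}$ of the $r$-subsets). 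The two arguments share the same combinatorial kernel, but yours buys more: it shows that $(E^T)^{-1}Dz$ is not merely nonnegative but an integer vector whose entries sum to $\binom{n}{r}$ --- facts the paper only extracts later, in Section~\ref{sec:loop}, when it introduces the orthogonal parameters $\hat z=(E^{-1})^TDz$ --- whereas the paper's cone formulation makes explicit that \emph{every} valid linear inequality on $\E(r)$, not just the extremal ones, is inherited by $\E(n)$, which is the form it reuses for the linear-programming bounds. Your treatment of the nonnegativity/integrality constraint and of the product-formula constraint via \eqref{eq:29} matches the paper's (the latter is its Proposition on the products in $\E(n)$), and your remarks about the completeness of $\E(r)$ guaranteeing invertibility of $E$ and the degenerate case $r=n$ are sound.
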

We postpone the proof to the next subsections. We will call a vector $z$
\emph{r-graphic} if all the constraints above are 
satisfied. This is a weak notion of vector $z$ being graphic. There is not
necessarily a graph with the parameters $z$. However even if $z$ is not fully
graphic, the constraints provide useful insights into large $G$-posets. 

The constraints in Theorem \ref{the:rajoitteet} have several significant
consequences. Firstly they allow us to find a triangular system of lower and
upper bounds for $k=2,3,\ldots,m$ s.t. 
\begin{equation}
L(z_1,z_2,\ldots,z_{k-1})\leq z_k \leq U(z_1,z_2,\ldots,z_{k-1}),
\end{equation}
where $z$ is assumed to be in the large $G$-poset $\E(n)$. We will show both
linear and nonlinear bounds. Notice that $0 \leq z_1 \leq {n \choose 2}$ is
an optimal bound for $z_1$ since we always choose $g_1=a_{12}$. The triangular system makes looping over all $r$-graphic vectors $z$ very easy. This will be discussed in Sections \ref{sec:loop} and \ref{sec:larger}.

\subsection{Linear Inequalities}
To tie different $G$-posets together we need the following lemma.
\begin{lemma}\label{lem:suht}
Let $g$ be a graph with $cv(g)\leq r$. Then
\begin{equation}
\sum_{v(a)=r,a \subseteq h} I(g)(a)={n-cv(g) \choose r-cv(g)}I(g)(h),
\end{equation}
where the sum is over all $r$-vertex subgraphs $a$ of the graph $h$ and $n$ is the number of vertices in the graph $h$.
\end{lemma}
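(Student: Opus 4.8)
The plan is to count, in two different ways, the pairs $(a, g')$ where $a$ is an $r$-vertex subgraph of $h$ and $g'$ is a subgraph of $a$ isomorphic to $g$. Equivalently, we interchange the order of summation: instead of first choosing the $r$-vertex subgraph $a$ and then counting copies of $g$ inside it, we first fix a copy of $g$ inside $h$ and then count how many $r$-vertex subgraphs $a$ of $h$ contain that particular copy.

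First I would make precise what "subgraph" means here, consistently with the definition of $I(g)(h)$ in \eqref{eq:eqeka}: a subgraph isomorphic to $g$ is determined by an injection of the $cv(g)$ connected vertices of $g$ (together with any isolated vertices, which contribute nothing to the edge structure) into $V(h)$ that carries the edges of $g$ to edges of $h$. The left-hand side $\sum_{v(a)=r,\,a\subseteq h} I(g)(a)$ counts pairs: an $r$-subset $S \subseteq V(h)$ (with $a$ the induced-on-$S$ subgraph, or rather a labeled $r$-vertex subgraph on $S$) and a copy of $g$ sitting inside $S$. Summing over $S$ first, then over copies of $g$, counts exactly the same pairs as summing over copies of $g$ first, then over the $r$-subsets $S$ that contain the vertex set of that copy.

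The key computation is then: given one fixed copy of $g$ in $h$, occupying $cv(g)$ vertices, the number of $r$-vertex subgraphs $a$ of $h$ whose vertex set contains those $cv(g)$ vertices is the number of ways to complete the remaining $r - cv(g)$ vertices from the other $n - cv(g)$ vertices of $h$, which is $\binom{n-cv(g)}{r-cv(g)}$ — and crucially this number does not depend on which copy of $g$ we chose. Hence the double sum factors as $\binom{n-cv(g)}{r-cv(g)}$ times the number of copies of $g$ in $h$, i.e. $\binom{n-cv(g)}{r-cv(g)} I(g)(h)$. The hypothesis $cv(g) \le r$ is exactly what is needed for this binomial coefficient to make sense (be nonzero and the completion to be possible); if $cv(g) > r$ no $r$-vertex subgraph could contain a copy of $g$ and both sides would be zero, but we have excluded that case.

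The main subtlety — the step I would be most careful about — is bookkeeping the roles of isolated vertices and making sure the notion of "$r$-vertex subgraph $a$" in the summation matches the convention under which $I(g)(a)$ is being evaluated (in particular whether $a$ ranges over induced subgraphs on $r$-subsets, or over all $r$-vertex subgraphs, and how $g$'s isolated vertices are handled). Once the conventions are pinned down so that a copy of $g$ in $h$ lives on a $cv(g)$-element vertex set and "extends" to a copy of $g$ in $a$ precisely when that vertex set is contained in $V(a)$, the Fubini-style interchange is immediate and the binomial count is routine. I would present this as: fix the identity $\sum_{v(a)=r,\,a\subseteq h}\sum_{g' \cong g,\, g'\subseteq a} 1 = \sum_{g' \cong g,\, g'\subseteq h} \#\{a : v(a)=r,\, g' \subseteq a \subseteq h\}$, evaluate the inner count on the right as $\binom{n-cv(g)}{r-cv(g)}$, and conclude.
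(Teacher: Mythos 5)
Your proposal is correct and is essentially the paper's own argument: the paper also fixes a single copy of $g$ (one monomial of the invariant) and observes that the number of $r$-vertex subsets containing its $cv(g)$ connected vertices is $\binom{n-cv(g)}{r-cv(g)}$, which is the same Fubini-style interchange you describe. Your version merely spells out the double-counting and the bookkeeping conventions more explicitly than the paper's two-line proof.
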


\begin{proof}
Fix the labels of the graph $g$ i.e. consider one single monomial of the
invariant $I(g)$. It is a simple matter to confirm that the number of $r$-sets containing the fixed graph $g$ is ${n-cv(g) \choose r-cv(g)}$.
\end{proof}
This lemma can be utilized as follows. Let $x$ denote the vector
$[1,I(g_1),I(g_2)$, $\ldots,$ $ I(g_m)]$ consisting of all invariants of the $G$-poset $\E(r)$. Next let 
\begin{equation}\label{diagonaali}
D=diag\left( {n \choose r},{n-cv(g_1) \choose r-cv(g_1)},\ldots,{n-cv(g_m) \choose r-cv(g_m)}\right).
\end{equation}
Consider now any linear inequality $c^Tx \leq 0$ inside the $G$-poset $\E(r)$,
i.e. for all graphs $g$ in the $G$-poset $\E(r)$ it holds that $c^Tx \leq 0$
when $x$ is evaluated at $g$. According to Lemma \ref{lem:suht} by summing
over all $r$-vertex-subsets of the invariants in the larger $G$-poset $\E(n)$
we have the relation $c^TDx \leq 0$.
\begin{example}
In the $G$-poset $\E(4)$ we find for instance that
\begin{equation}
[-1,2,0,-1,0,0,0,0,0,0,0]x\leq0,
\end{equation}
where the order of graphs is determined by Example \ref{ex:aaa}. Recall the $E$-transform is
\begin{displaymath}
E=\left( \begin{array}{ccccccccccc}
1 & 0 & 0 & 0 & 0 & 0 & 0 & 0 & 0 & 0 & 0 \\
1 & 1 & 0 & 0 & 0 & 0 & 0 & 0 & 0 & 0 & 0 \\
1 & 2 & 1 & 0 & 0 & 0 & 0 & 0 & 0 & 0 & 0 \\
1 & 2 & 0 & 1 & 0 & 0 & 0 & 0 & 0 & 0 & 0 \\
1 & 3 & 0 & 3 & 1 & 0 & 0 & 0 & 0 & 0 & 0 \\
1 & 3 & 1 & 2 & 0 & 1 & 0 & 0 & 0 & 0 & 0 \\
1 & 3 & 0 & 3 & 0 & 0 & 1 & 0 & 0 & 0 & 0\\ 
1 & 4 & 1 & 5 & 1 & 2 & 1 & 1 & 0 & 0 & 0 \\
1 & 4 & 2 & 4 & 0 & 4 & 0 & 0 & 1 & 0 & 0 \\
1 & 5 & 2 & 8 & 2 & 6 & 2 & 4 & 1 & 1 & 0\\ 
1 & 6 & 3 & 12 & 4 & 12 & 4 & 12 & 3 & 6 & 1 \\
\end{array} \right).
\end{displaymath}
The inequality $-1+2I(g_1)-I(g_3)\leq 0$ implies the inequality
\begin{equation}
-{n \choose 4}+2{n-2 \choose 2}I(g_1)-I(g_3) \leq 0.
\end{equation}
in the $G$-poset $\E(n)$ since $cv(\emptyset )=0$, $cv(g_1)=2$ and $cv(g_3)=4$. 
\end{example}

We can express this neatly by using the $E$-transform.
\begin{proposition}\label{pro:kat}
Let $E$ be the  $E$-transform of the $G$-poset $\E(r)$. Let $D$ be the diagonal
matrix (\ref{diagonaali}) above. The $G$-poset $\E(r)$ imposes the following linear constraints on the $G$-poset $\E(n)$ when $r\leq n$.
\begin{equation}
\forall c\in \mathbb{R}^{m+1} \ \mathrm{s.t.}\ Ec\leq0\ : \ c^TDz\leq0.
\end{equation}
\end{proposition}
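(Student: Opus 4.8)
The plan is to reduce Proposition \ref{pro:kat} to the scalar statement established just before it, namely that any linear inequality $c^Tx\le 0$ valid on every graph of $\E(r)$ lifts (via Lemma \ref{lem:suht}) to the inequality $c^TDz\le 0$ on $\E(n)$. The only gap to fill is the translation between the hypothesis ``$c^Tx\le 0$ on all of $\E(r)$'' and the matrix condition $Ec\le 0$. So the first step is to recall that the rows of the $E$-transform are precisely the evaluation vectors of the basic invariants: the $i$-th row of $E$ is $[I(g_0)(g_i),I(g_1)(g_i),\ldots,I(g_m)(g_i)]$, which is exactly the vector $x$ evaluated at the graph $g_i$. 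Hence $Ec$ is the column vector whose $i$-th entry is $c^Tx$ evaluated at $g_i$, and the condition $Ec\le 0$ says precisely that $c^Tx\le 0$ holds at every graph $g_i$ in $\E(r)$.

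The second step is to argue that checking $c^Tx\le0$ at the graphs $g_0,\ldots,g_m$ generating the $G$-poset is the same as checking it ``inside the $G$-poset $\E(r)$'' in the sense used before the proposition. This is immediate: the phrase ``for all graphs $g$ in the $G$-poset $\E(r)$'' refers to exactly the monomials $m_1,\ldots,m_N$ (equivalently $g_0,\ldots,g_m$ after adjoining the empty graph) that index the $E$-transform, since those are by definition all the orbit representatives appearing in $\E(r)$. So $Ec\le 0 \iff c^Tx\le 0$ on $\E(r)$, and we may invoke the discussion preceding the proposition verbatim.

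The third step is the lift itself, which is already spelled out: given $c$ with $c^Tx\le0$ on $\E(r)$, write out $c^TDx = \sum_j c_j \binom{n-cv(g_j)}{r-cv(g_j)} I(g_j)$. By Lemma \ref{lem:suht}, for each $j$ we have $\binom{n-cv(g_j)}{r-cv(g_j)} I(g_j)(h) = \sum_{v(a)=r,\,a\subseteq h} I(g_j)(a)$, so summing against $c$ gives $c^TDz = \sum_{v(a)=r,\,a\subseteq h} c^Tx(a)$ when $z$ is the invariant vector of a graph $h\in\E(n)$. Each summand $c^Tx(a)$ is $\le 0$ because $a$, being an $r$-vertex graph, lies in $\E(r)$; hence the sum is $\le 0$, i.e. $c^TDz\le0$. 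A small point to handle carefully is that Lemma \ref{lem:suht} requires $cv(g)\le r$, which holds for every $g_j\in\E(r)$ since such a graph has at most $r$ vertices; and the $j=0$ (empty graph) term is consistent because $\binom{n}{r}$ counts all $r$-subsets.

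**The main obstacle**, such as it is, is purely bookkeeping: making the identification of $Ec\le0$ with ``$c^Tx\le0$ on $\E(r)$'' precise, including the role of the empty graph as the zeroth row/column, and being explicit that ``valid on the $G$-poset'' means valid on its generating graphs. Once that dictionary is in place the proposition is just Lemma \ref{lem:suht} applied coordinatewise and summed. No genuinely hard estimate or construction is needed; the content is the reinterpretation of the $E$-transform's rows as evaluation functionals.
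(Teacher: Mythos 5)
Your proposal is correct and follows essentially the same route as the paper: the paper gives no separate proof of Proposition \ref{pro:kat} precisely because it regards the statement as a restatement of the preceding discussion, where an inequality $c^Tx\le 0$ valid on all graphs of $\E(r)$ is lifted to $c^TDz\le 0$ on $\E(n)$ by summing Lemma \ref{lem:suht} over all $r$-vertex subgraphs. Your only addition is to make explicit the dictionary $Ec\le 0 \iff c^Tx\le 0$ on every graph of $\E(r)$ via the identification of the rows of $E$ with the evaluation vectors $e_{ij}=I(g_j)(g_i)$, which is exactly the intended reading.
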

But we can further simplify this result.
\begin{proposition}\label{the:lie}
Let $E$ and $D$ be the matrices as in Proposition \ref{pro:kat}. Let $z$ be the vector of graph invariants in the $G$-poset $\E(n)$ as described above. Then all the linear constraints given by Proposition \ref{pro:kat} are satisfied if the inequality
\begin{equation}
(E^{-1})^TDz \geq 0
\end{equation}
holds.
\end{proposition}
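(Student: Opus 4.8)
The plan is to show that the single matrix inequality $(E^{-1})^T D z \geq 0$ is equivalent to the whole family of inequalities in Proposition \ref{pro:kat}, namely $c^T D z \leq 0$ for every $c$ with $Ec \leq 0$. First I would observe that the condition $Ec \leq 0$ describes a polyhedral cone $C = \{c : Ec \leq 0\}$ in $\mathbb{R}^{m+1}$, and the family of constraints in Proposition \ref{pro:kat} says exactly that the vector $Dz$ lies in the polar (dual) cone $C^\circ = \{y : c^T y \leq 0 \ \forall c \in C\}$. So the claim reduces to identifying $C^\circ$ explicitly.

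The key step is to compute the dual of $C = \{c : Ec \leq 0\}$. Since $E$ is the $E$-transform of a complete $G$-poset, it is lower triangular with $1$'s on the diagonal (as the example shows), hence invertible, so the linear map $c \mapsto Ec$ is a bijection. Writing $u = Ec$, the cone $C$ is the image of the nonpositive orthant $\{u \leq 0\}$ under $E^{-1}$, i.e. $C = E^{-1}\{u \leq 0\}$. For an invertible linear map $M$ and a cone $K$ one has the standard duality $(MK)^\circ = (M^T)^{-1} K^\circ$; applied with $M = E^{-1}$ and $K = \{u \leq 0\}$, whose polar is $(\{u \leq 0\})^\circ = \{v : v \geq 0\}$ (the nonnegative orthant), this gives
\begin{equation}
C^\circ = (E^{-1})^{-T}\{v \geq 0\} = E^T \{v \geq 0\}.
\end{equation}
Therefore $Dz \in C^\circ$ if and only if $Dz = E^T v$ for some $v \geq 0$, i.e. if and only if $(E^T)^{-1} D z \geq 0$, which is the same as $(E^{-1})^T D z \geq 0$. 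This establishes that the family of linear constraints of Proposition \ref{pro:kat} holds precisely when the stated inequality does; in particular the stated inequality implies all of them, which is what is asserted.

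Alternatively, and perhaps more transparently for the reader, I would give the direct algebraic argument: suppose $(E^{-1})^T D z \geq 0$ and let $c$ satisfy $Ec \leq 0$. Then
\begin{equation}
c^T D z = (Ec)^T (E^{-1})^T (Dz) = (Ec)^T \bigl((E^{-1})^T Dz\bigr),
\end{equation}
which is an inner product of the nonpositive vector $Ec$ with the nonnegative vector $(E^{-1})^T Dz$, hence $\leq 0$. This is the crux of the proof and requires only the factorization $c^T = (Ec)^T (E^{-1})^T$ together with sign-tracking. The main (minor) obstacle is simply being careful with transposes and with the direction of the inequalities — confirming that the polar of the nonpositive orthant is the nonnegative orthant and that no sign is flipped when pulling $E^{-1}$ through the transpose; once that bookkeeping is fixed, the argument is a one-line computation. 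I would also remark that the converse (every linear constraint of Proposition \ref{pro:kat} implies $(E^{-1})^T D z \geq 0$) holds too, so nothing is lost in passing to this compact form, though only the stated direction is needed for the theorem.
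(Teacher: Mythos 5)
Your proof is correct and follows essentially the same route as the paper: the paper parametrizes the cone $\{c : Ec \leq 0\}$ as $c = -E^{-1}y$ with $y \geq 0$ and concludes $c^T D z = -y^T (E^{-1})^T D z \leq 0$, which is exactly your direct factorization $c^T D z = (Ec)^T (E^{-1})^T D z$ with the substitution $u = Ec = -y$. Your cone-duality framing and the observation that the converse also holds are a nice bonus but do not change the substance of the argument.
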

\begin{proof}
The set of vectors $c \in \mathbb{R}^{m+1}$ satisfying $Ec\leq0$ is a convex simplex. We find that all such vectors $c$ are generated by the formula $-E^{-1}y$, where $y\geq0$. It is therefore sufficient to confirm that $-y^T(E^{-1})^TDz \leq 0$ for all $y\geq0$. This follows if $-I(E^{-1})^TDz \leq 0$, where $I$ is the identity matrix.
\end{proof}
This proves the linear constraints in Theorem \ref{the:rajoitteet}.

\subsection{Nonlinear Equalities and Inequalities}
As explained in \cite{Tomi3}, the product formula for $I(g_i)I(g_j)$ in the $G$-poset $\E(r)$ is general if $cv(g_i)+cv(g_j) \leq r$. Naturally these apply also to $\E(n)$.
\begin{proposition}
Invariants of the $G$-poset $\E(r)$, $N=|\E(r)|$ satisfy 
\begin{equation}
I(g_i)I(g_j)=\sum_{k=1}^N \left(\sum_{h=1}^N (-1)^{e_{k1}-e_{h1}}e_{kh}e_{hi}e_{hj}\right)I(g_k),
\end{equation}
in the $G$-poset $\E(n)$, where $e_{ij}$ is the $ij^{\mathrm{th}}$ entry in the $E$-transform of the $G$-poset $\E(r)$ and $cv(g_i)+cv(g_j) \leq r$.
\end{proposition}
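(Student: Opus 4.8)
The plan is to obtain the statement directly from the general product identity (\ref{eq:29}), applied inside $\E(n)$, the only new ingredient being a suitable choice of index set. Recall that (\ref{eq:29}) holds whenever the set $\{g_1,\dots,g_N\}$ contains every graph on at most $cv(g_i)+cv(g_j)$ vertices, and, as noted immediately after (\ref{eq:29}), it continues to hold if the set is enlarged --- in particular to the set of all graphs on at most $r$ vertices. So the first step is simply to observe that the hypothesis $cv(g_i)+cv(g_j)\le r$ makes $\E(r)$, regarded as a set of isomorphism classes of graphs, such an admissible (over-complete) index set for (\ref{eq:29}) inside $\E(n)$, which is legitimate since $n\ge r\ge cv(g_i)+cv(g_j)$.

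The second step is to match the coefficients with entries of the $E$-transform of $\E(r)$. In (\ref{eq:29}) the numbers $e_{k\ell}$ are $I(g_\ell)(g_k)$, and these depend only on the isomorphism types of $g_k$ and $g_\ell$, not on any ambient vertex count; hence the matrix $(e_{k\ell})$ attached to the index set above is literally the $E$-transform of $\E(r)$. Thus the coefficient of $I(g_k)$ produced by (\ref{eq:29}) is $\sum_h (-1)^{|g_k|-|g_h|}e_{kh}e_{hi}e_{hj}$ with $e$ the $E$-transform of $\E(r)$. Finally, since $g_1=a_{12}$, we have $e_{k1}=I(a_{12})(g_k)=|g_k|$ and likewise $e_{h1}=|g_h|$, so $(-1)^{e_{k1}-e_{h1}}=(-1)^{|g_k|-|g_h|}$; the displayed formula is then exactly (\ref{eq:29}) with this index set, and so holds in $\E(n)$.

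I expect the only substantive point --- and hence the main obstacle, though it is already addressed in \cite{Tomi3} and in the remark following (\ref{eq:29}) --- to be the assertion that enlarging the index set leaves the expansion intact. Concretely one must verify that $c_{ij}^k=0$ whenever $cv(g_k)>cv(g_i)+cv(g_j)$, and that the coefficients of the remaining $g_k$ are unchanged by the enlargement. The first is a support argument: every monomial of the polynomial $I(g_i)I(g_j)$ is a product of a monomial with the structure of $g_i$ and a monomial with the structure of $g_j$, so after the multilinear reduction $a_{\ell m}^2=a_{\ell m}$ it meets at most $cv(g_i)+cv(g_j)$ vertices, whence only $g_k$ with $cv(g_k)\le cv(g_i)+cv(g_j)$ can appear with nonzero coefficient. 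The second follows from the block-triangular shape of the $E$-transform of a complete $G$-poset together with $B=E^{-1}$, $b_{k\ell}=(-1)^{|g_k|-|g_\ell|}e_{k\ell}$: adjoining graphs with more edges only appends rows and columns that do not perturb the relevant block of $B$, so the Mnukhin coefficients are stable. Modulo this stability, the proof is pure bookkeeping.
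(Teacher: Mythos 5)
Your proposal is correct and follows essentially the same route as the paper, which offers no argument beyond noting that the Mnukhin product formula (\ref{eq:29}) is a polynomial identity in the $a_{ij}$ valid whenever the index set contains all graphs on at most $cv(g_i)+cv(g_j)$ vertices, so that $cv(g_i)+cv(g_j)\le r$ makes $\E(r)$ admissible and the identity transfers verbatim to $\E(n)$. Your additional remarks --- the identification $(-1)^{e_{k1}-e_{h1}}=(-1)^{|g_k|-|g_h|}$ via $e_{k1}=I(a_{12})(g_k)=|g_k|$ and the stability of the coefficients under enlarging the index set --- are correct and in fact supply detail the paper leaves implicit.
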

This proves the nonlinear part of Theorem \ref{the:rajoitteet}.

It is possible to incorporate the nonlinear constraints with the linear
constraints to produce strong lower and upper bounds for any graph
invariants. Let $g_1=a_{12}$, $g_2=a_{12}a_{34}$  and $g_3=a_{12}a_{13}$. We
develop a nonlinear lower bound $L_3$ and an upper bound $U_3$, for the invariant $I(g_3)$ s.t.
\begin{equation}
L_3(I(g_1)) \leq I(g_3) \leq U_3(I(g_1)).
\end{equation}

First we find by linear programming the best possible lower and upper bounds for our invariant $I(g_3)$ in terms of constant $1$, invariants $I(g_1)$ and $I(g_1)^2$ in $\E(5)$ (we had to choose $\E(5)$ instead of $\E(4)$ because $I(g_3)$ cancels in the inequality obtained in $\E(4)$). In the example we have maximized the surface area under the lower bound and minimized the surface area under the upper bound. We find that in $\E(5)$ 
\begin{equation}
-I(g_1)+\frac{2}{5}I(g_1)^2 \leq I(g_3) \leq \frac{1}{2}I(g_1)+\frac{1}{4}I(g_1)^2,
\end{equation}
see Figure \ref{fig:bnd}. 
\begin{figure}[!htk]
\begin{center}
\includegraphics[width=7cm,height=5cm]{./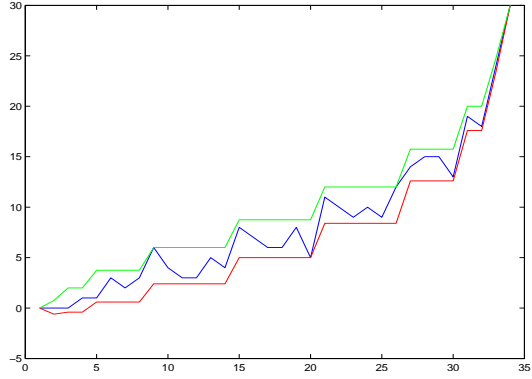}
\end{center}
\vspace{-0.5cm}
\caption{Lower and upper bounds for $I(g_3)$ in $\E(5)$.}\label{fig:bnd}
\end{figure}
Since Lemma \ref{lem:suht} applies only to linear invariants we substitute $I(g_1)^2=I(g_1)+2I(g_2)+2I(g_3)$ obtained from the product formula in $\E(4)$. We get
\begin{equation}
-3I(g_1)+4I(g_2) \leq I(g_3) \leq \frac{3}{2}I(g_1)+I(g_2).
\end{equation} 
Next we generalize this inequality in the $G$-poset $\E(n)$ by multiplying all the invariants $I(g_i)$ by ${n-cv(g_i) \choose 5-cv(g_i)}$. Thus we obtain
\begin{equation}
-2{n-2 \choose 3}I(g_1)+4(n-4)I(g_2) \leq {n-3 \choose 2}I(g_3) \leq \frac{3}{2}{n-2 \choose 3}I(g_1)+(n-4)I(g_2)
\end{equation}
since $cv(g_1)=2,cv(g_2)=4$ and $cv(g_3)=3$. Next we use again the relation $I(g_2)=\left(I(g_1)^2-I(g_1)-2I(g_3)\right)/2$ and obtain after simplifications the following proposition.
\begin{proposition}\label{pro:raja3}
Let $I(g_1)=I(a_{12})$ and $I(g_3)=I(a_{12}a_{13})$. Then in the $G$-poset $\E(n)$, where $n \geq 5$ we have
\begin{eqnarray}
I(g_3) \geq \frac{-\left(3{n-2 \choose 3}+2(n-4)\right)I(g_1)+2(n-4)I(g_1)^2}{{n-3 \choose 2}+4(n-4)} \\
I(g_3) \leq \frac{\frac{n-4}{2}I(g_1)^2+\left(\frac{3}{2}{n-2 \choose 3}-\frac{n-4}{2} \right)I(g_1)}{{n-3 \choose 2}+n-4}.
\end{eqnarray}
\end{proposition}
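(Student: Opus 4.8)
The plan is to execute, in the order laid out in the paragraphs above, the lifting procedure: first pin down a pair of quadratic bounds for $I(g_3)$ in terms of $I(g_1)$ inside the \emph{finite} $G$-poset $\E(5)$, then linearize them via the product formula computed in $\E(4)$, transport the resulting linear inequalities to $\E(n)$ using Lemma \ref{lem:suht}, and finally substitute back to obtain a pure relation between $I(g_3)$ and $I(g_1)$.

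First I would work entirely inside $\E(5)$. Since $\E(5)$ contains only finitely many graphs, I can tabulate the pairs $\big(I(g_1),I(g_3)\big)$ over all of them and solve two linear programs in the unknown coefficients $(\alpha,\beta,\gamma)$: minimize (resp.\ maximize) a chosen area functional subject to $\alpha+\beta I(g_1)+\gamma I(g_1)^2$ lying below (resp.\ above) $I(g_3)$ at every graph in $\E(5)$. This produces the valid inequalities $-I(g_1)+\frac{2}{5}I(g_1)^2\le I(g_3)\le\frac{1}{2}I(g_1)+\frac{1}{4}I(g_1)^2$. For the proposition only the \emph{validity} of these two inequalities over all five-vertex graphs is needed — not their optimality — so this step reduces to a finite verification.

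Next I would eliminate the quadratic term so that Lemma \ref{lem:suht} becomes applicable. Using the product formula (\ref{eq:29}) computed in $\E(4)$, namely $I(g_1)^2=I(g_1)+2I(g_2)+2I(g_3)$ with $g_2=a_{12}a_{34}$ (an exact identity that holds in every $\E(n)$ with $n\ge 4$, in particular throughout $\E(5)$), the two bounds become the linear inequalities $-3I(g_1)+4I(g_2)\le I(g_3)\le\frac{3}{2}I(g_1)+I(g_2)$, still valid in $\E(5)$. Now Lemma \ref{lem:suht} (equivalently Proposition \ref{pro:kat}) applies with $r=5$: multiplying each $I(g_i)$ by ${n-cv(g_i)\choose 5-cv(g_i)}$, with $cv(g_1)=2$, $cv(g_2)=4$, $cv(g_3)=3$, produces linear inequalities valid in $\E(n)$ for every $n\ge 5$, in which $I(g_1)$ carries the factor ${n-2\choose 3}$, $I(g_2)$ the factor $n-4$, and $I(g_3)$ the factor ${n-3\choose 2}$. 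Finally I would substitute $I(g_2)=\frac{1}{2}\big(I(g_1)^2-I(g_1)-2I(g_3)\big)$ — again the exact $\E(4)$ product formula, valid in $\E(n)$ for $n\ge 4$ — collect the $I(g_3)$ terms on one side, and divide by the coefficient of $I(g_3)$, which works out to ${n-3\choose 2}+4(n-4)$ for the lower bound and ${n-3\choose 2}+(n-4)$ for the upper bound; both are strictly positive for $n\ge 5$, so the inequality senses are preserved, and clearing fractions gives precisely the two stated formulas.

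I expect the only real obstacle to be bookkeeping: carrying out the $\E(5)$ linear program, verifying that the two displayed quadratic inequalities actually hold at every five-vertex graph, and then keeping the binomial-coefficient arithmetic straight while solving the final linear relation for $I(g_3)$. The one point that genuinely has to be argued rather than merely computed is that the coefficient of $I(g_3)$ after the last substitution never vanishes for $n\ge 5$ — it does not, being positive as noted above — so the final rearrangement is legitimate; everything else follows from the already-established Lemma \ref{lem:suht} and the generality of the product formula when $cv(g_i)+cv(g_j)\le r$.
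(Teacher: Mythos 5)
Your proposal is correct and follows essentially the same route as the paper: quadratic bounds in $\E(5)$ by linear programming, linearization via $I(g_1)^2=I(g_1)+2I(g_2)+2I(g_3)$, lifting to $\E(n)$ with the multipliers ${n-cv(g_i)\choose 5-cv(g_i)}$, and back-substitution of $I(g_2)$. Your coefficient $-3{n-2\choose 3}$ on $I(g_1)$ in the lifted lower bound is the one consistent with the stated proposition (the paper's intermediate display writes $-2{n-2\choose 3}$, which appears to be a typo), and your observation that the denominators ${n-3\choose 2}+4(n-4)$ and ${n-3\choose 2}+(n-4)$ are positive for $n\geq 5$ correctly justifies the final division.
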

See Figure \ref{fig:bnd_e7}. 
\begin{figure}[!htk]
\begin{center}
\includegraphics[width=7cm,height=5cm]{./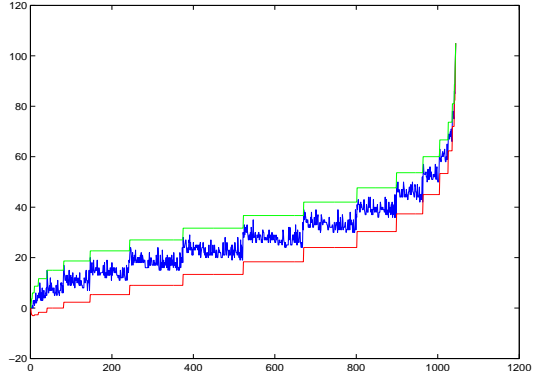}
\end{center}
\vspace{-0.5cm}
\caption{Lower and upper bounds for $I(g_3)$ in $\E(7)$ given by Propostion \ref{pro:raja3}.}\label{fig:bnd_e7}
\end{figure}

\subsection{Looping Over Weakly Graphic Invariants}\label{sec:loop}
Suppose we want to loop through all the vectors $z$, $z_i=I(g_i)(h)$, where $g_i\in\E(r)$ and $h\in \E(n)$, $n \geq r$ such that they are $r$-graphic i.e. all the constraints imposed by the $G$-poset $\E(r)$ are satisfied.

It seems to be more natural to loop over the variables $z_i$ in reversed order from $z_N$ to $z_1$. The constraint $(E^{-1})^TDz \geq 0$ is readily in a triangular form i.e. starting from the last row gives the constraints 
\begin{eqnarray}
{n-cv(g_N) \choose r-cv(g_N)}z_N \geq 0 \\ \nonumber
{n-cv(g_{N-1}) \choose r-cv(g_{N-1})}z_{N-1}+(-1)^{e_{N,1}-e_{N-1,1}}e_{N-1,N}{n-cv(g_N) \choose r-cv(g_N)}z_N \geq 0 \\ \nonumber
\ldots
\end{eqnarray}
We have used the facts that $e_{i,i}=1$ and on the diagonal the exponents of $-1$ are even.

Thus we have a triangular system of lower bounds and they are by themselves
already sufficient to guarantee that $z$ satisfies the linear constraints for
$r$-graphic vectors. However it is not clear from this triangular system when to stop 
adding the variable $z_N$. Although the polyhedron $(E^{-1})^TDz \geq 0,z \geq
0$ contains only a finite amount of integer points, we do not see that some $z_N$ is too large until we have tried all possibilities for $z_{N-1},z_{N-2},\ldots,z_1$.

A better way to deal this problem is to find also a triangular system of upper bounds.

Let $\hat{z}=(E^{-1})^Tz$ be the \emph{orthogonal parameters} in $\E(r)$. Only
one of the components of $\hat{z}$ is one and the rest of them are zero if $z$ is evaluated in a graph belonging to $\E(r)$. Thus if we sum over all $r$-vertex subsets of some graph in $\E(n)$ we get $\hat{z}=(E^{-1})^TDz$ and these satisfy
\begin{equation}
\sum_{g_k \in \E(r)} \hat{z}_k = {n \choose r}
\end{equation}
yielding
\begin{equation}\label{eq:rajo}
\hat{z}_i \leq {n \choose r}-\sum_{j = i+1}^N \hat{z}_j.
\end{equation}

Thus we are able to write a triangular system of upper bounds for $z$ as follows. Notice that 
\begin{equation}\label{eq:orto}
\hat{z_i}=\sum_{k=i}^N (-1)^{e_{i1}-e_{k1}}{n-cv(g_k) \choose r-cv(g_k)}e_{ki}z_k.
\end{equation}
Once we know the values of $z_N,z_{N-1},\ldots,z_{i+1}$ and $z_0=1$, by (\ref{eq:rajo}) we have
\begin{eqnarray}
\hat{z}_i \leq {n \choose r}-\sum_{k=i+1}^N \hat{z}_k \\ \nonumber
={n \choose r}-\sum_{k=i+1}^N\sum_{h=k}^N (-1)^{e_{k1}-e_{h1}}{n-cv(g_h) \choose r-cv(g_h)}e_{hk}z_h.
\end{eqnarray}

Next expand $\hat{z}_i$ by equation (\ref{eq:orto}) 
\begin{eqnarray}
\sum_{l=i}^N (-1)^{e_{i1}-e_{l1}}{n-cv(g_l) \choose r-cv(g_l)}e_{li}z_l \\ \nonumber
\leq{n \choose r}-\sum_{k=i+1}^N\sum_{h=k}^N (-1)^{e_{k1}-e_{h1}}{n-cv(g_h) \choose r-cv(g_h)}e_{hk}z_h \\ \nonumber
\Leftrightarrow \\ \nonumber
{n-cv(g_i) \choose r-cv(g_i)}z_i \leq {n \choose r}-\sum_{k=i+1}^N\sum_{h=k}^N (-1)^{e_{k1}-e_{h1}}{n-cv(g_h) \choose r-cv(g_h)}e_{hk}z_h \\ \nonumber
-\sum_{l=i+1}^N (-1)^{e_{i1}-e_{l1}}{n-cv(g_l) \choose r-cv(g_l)}e_{li}z_l \\ \nonumber
\Leftrightarrow \\ \nonumber
{n-cv(g_i) \choose r-cv(g_i)}z_i \leq {n \choose r} \\ \nonumber
- \sum_{h=i+1}^N {n-cv(g_h) \choose r-cv(g_h)} \left((-1)^{e_{i1}-e_{h1}}e_{hi}+\sum_{k=i+1}^h  (-1)^{e_{k1}-e_{h1}}e_{hk} \right) z_h \\ \nonumber
\Leftrightarrow \\ \nonumber
{n-cv(g_i) \choose r-cv(g_i)}z_i \leq {n \choose r} - \sum_{h=i+1}^N {n-cv(g_h) \choose r-cv(g_h)}\sum_{k=i}^h (-1)^{e_{k1}-e_{h1}}e_{hk}z_h
\end{eqnarray}

In matrix notation the lower bounds and upper bounds can be stated as follows.
\begin{proposition}
The triangular lower bound is 
\begin{equation}
Lz \leq Dz,
\end{equation}
where $L=((E^{-1})^T-I)D$. Similarly the upper bound in matrix notation
becomes 
\begin{equation}
Dz \leq {n \choose r}-Uz,
\end{equation}
where $U=T(E^{-1})^T D$ and $T$ is the upper triangular matrix with ones:
\begin{displaymath}
T=
\left( \begin{array}{ccccc}
0 & 1 & 1 & \cdots & 1\\
0 & 0 & 1 & \cdots & 1\\
\vdots & & & & \\
0 & 0 & 0 & \cdots & 1\\
0 & 0 & 0 & \cdots & 0
\end{array} \right).
\end{displaymath}
\end{proposition}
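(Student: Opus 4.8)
The plan is to read off both systems of inequalities, row by row, from two facts already established: the nonnegativity $(E^{-1})^{T}Dz\ge 0$ of Proposition~\ref{the:lie} for the lower bound, and the identity $\sum_{g_{k}\in\E(r)}\hat z_{k}=\binom{n}{r}$ together with $\hat z=(E^{-1})^{T}Dz\ge 0$ for the upper bound. The structural input I would use everywhere is that a complete $G$-poset may be listed so that $g_{i}\subseteq g_{j}$ forces $i\le j$; then $E$, and hence $E^{-1}$, is lower triangular with unit diagonal (recall $e_{ii}=1$ and, by completeness, $b_{ij}=(-1)^{|g_{i}|-|g_{j}|}e_{ij}$), so $(E^{-1})^{T}$ is upper triangular with unit diagonal. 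This triangularity is exactly what makes both bounds triangular.

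For the lower bound I would write out row $i$ of $(E^{-1})^{T}Dz\ge 0$. Because the diagonal of $(E^{-1})^{T}$ is $1$, this row separates into $[Dz]_{i}$ plus a combination of the $[Dz]_{k}$ with $k>i$ only, i.e.\ a lower bound for $[Dz]_{i}$ in the later variables alone. Collecting the strictly upper part over all rows is exactly $(E^{-1})^{T}-I$ acting on $Dz$, and rescaling by $D$ turns the whole family of row inequalities into the asserted matrix statement with $L=((E^{-1})^{T}-I)D$. Nothing enters here beyond the triangular shape of $(E^{-1})^{T}$.

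For the upper bound I would start from $\hat z\ge 0$ and $\sum_{k}\hat z_{k}=\binom{n}{r}$, and discard the nonnegative terms of index below $i$ to get the partial-sum inequality $\sum_{k\ge i}\hat z_{k}\le\binom{n}{r}$, i.e.\ $\hat z_{i}\le\binom{n}{r}-\sum_{k>i}\hat z_{k}$, which is (\ref{eq:rajo}). I would then substitute the explicit expansion (\ref{eq:orto}) of every $\hat z_{k}$ in terms of the $z_{l}$, isolate on the left the unique diagonal term, which is $[Dz]_{i}$, and push all remaining terms to the right. After interchanging the order of summation in the resulting double sum $\sum_{h>i}\sum_{k}(\cdot)\,z_{h}$ — the inner index collapsing to the range $i\le k\le h$ — and reading off the coefficient of each $z_{h}$, the right-hand side takes the form $\binom{n}{r}\mathbf 1-Uz$, and tracing which partial sum of $\hat z$ has been produced identifies $U$ in terms of $(E^{-1})^{T}$, $D$ and the strictly upper triangular all-ones matrix $T$, with $Uz=T\hat z$ carrying the $\sum_{k>i}\hat z_{k}$ contribution.

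I expect the only real difficulty to be organisational, namely this last double-sum manipulation: interchanging the order of summation, recombining with the off-diagonal part of the expanded $\hat z_{i}$, and checking that the signs $(-1)^{e_{k1}-e_{h1}}$ and the unit diagonal of $(E^{-1})^{T}$ conspire so that the right-hand side collapses into a single matrix product (and likewise, more easily, for the lower bound). There is no conceptual content; it is purely bookkeeping of the diagonal terms and the triangular summation ranges.
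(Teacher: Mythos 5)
Your proposal follows essentially the same route as the paper: the lower bound is read off row by row from the triangular form of $(E^{-1})^TDz \geq 0$, and the upper bound comes from $\hat z = (E^{-1})^TDz \geq 0$ together with $\sum_k \hat z_k = \binom{n}{r}$, followed by substituting the expansion of $\hat z_i$ and interchanging the double sum — which is exactly the computation the paper carries out immediately before stating the proposition. You also correctly flag the one genuinely delicate point (recombining the off-diagonal part of the expanded $\hat z_i$ with the $\sum_{k>i}\hat z_k$ terms), which is where all the bookkeeping lives.
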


Sometimes there are reasons to loop variables in order $z_1,z_2,\ldots,z_N$. This can be achieved at least by linear programming. However we are not able to confirm that the system is equally tight as $(E^{-1})^TDz \geq 0,z \geq 0$.

Let $L_3(z_0,z_1,n)$ and $U_3(z_0,z_1,n)$ be the bounds for $I(g_3)$ given by Proposition \ref{pro:raja3}. Let us denote by $[LDz]_i$ the $i^{\mathrm{th}}$ element of the vector $\lceil LDz \rceil$ and $[UDz]_i$ the $i^{\mathrm{th}}$ element of the vector $\lfloor LDz \rfloor$.

Clearly $z_1$, the number of edges, satisfies $0\leq z_1 \leq {n \choose 2}$ in $\E(n)$. Secondly the bounds for $z_3$ are given by Proposition \ref{pro:raja3}. From $z_1$ and $z_3$ we can solve the $z_2=(z_1^2-z_1-2z_3)/2$ thus giving a good start for our loop.

In practice for the rest of the variables the nonlinear bounds obtained in the previous section become very complicated and thus we restrict to linear bounds in this section. However we describe a method in Section \ref{sec:larger} to incorporate also the nonlinear constraints in looping.

\begin{example}
In $\E(4)$ the lower bound found by linear programming is 
\begin{small}
\begin{displaymath}
L=\left [\begin {array}{ccccccccccc} 
0&0&0&0&0&0&0&0&0&0&0\\\noalign{\medskip}
0&0&0&0&0&0&0&0&0&0&0\\\noalign{\medskip}
0&0&0&0&0&0&0&0&0&0&0\\\noalign{\medskip}
-3&2&-1&0&0&0&0&0&0&0&0\\\noalign{\medskip}
0&-2/3&0&2/3&0&0&0&0&0&0&0\\\noalign{\medskip}
0&-1&2&1/2&3/2&0&0&0&0&0&0\\\noalign{\medskip}
0&0&0&-1/3&1&1/3&0&0&0&0&0\\\noalign{\medskip}
-4/5&4/5&-4/5&-1&7/5&4/5&7/5&0&0&0&0\\\noalign{\medskip}
0&0&-1/3&0&-1/2&1/3&0&1/6&0&0&0\\\noalign{\medskip}
0&-1/3&2/3&2/3&-1&-1&-1&7/6&4/3&0&0\\\noalign{\medskip}0
&0&0&0&0&0&0&-1/12&0&1/3&0\end {array}\right ]
\end{displaymath}
\end{small}
and the upper bound is
\begin{small}
\begin{displaymath}
U=\left [\begin {array}{ccccccccccc} 
0&0&0&0&0&0&0&0&0&0&0\\\noalign{\medskip}
6&0&0&0&0&0&0&0&0&0&0\\\noalign{\medskip}
0&1/2&0&0&0&0&0&0&0&0&0\\\noalign{\medskip}
0&1&2&0&0&0&0&0&0&0&0\\\noalign{\medskip}
0&0&0&1/3&0&0&0&0&0&0&0\\\noalign{\medskip}
{\frac {12}{11}}&-{\frac {12}{11}}&{\frac {16}{11}}&{\frac {12}{11}}&0&0&0&0&0&0&0\\\noalign{\medskip}
0&1/3&-2/3&0&1&0&0&0&0&0&0\\\noalign{\medskip}
0&2/3&0&-4/3&2&2/3&2&0&0&0&0\\\noalign{\medskip}
0&0&0&1/4&-3/4&0&-3/4&1/2&0&0&0\\\noalign{\medskip}
0&0&0&1/6&-1/2&-1/3&-1/2&5/6&2/3&0&0\\\noalign{\medskip}
0&0&0&0&0&1/12&0&-1/6&-1/3&1/2&0\end {array}\right ].
\end{displaymath}
\end{small}
\end{example}

The matrices $L$ and $U$ give optimized lower and upper bounds in the $G$-poset
$\E(r)$. To generalize these bounds in $\E(n)$, according to Lemma
\ref{lem:suht}, we multiply the matrices with the diagonal matrix $D$.

\section{Strongly Graphic Values}
Suppose we have the integer vector $z_i,i=1\ldots r$. When can we say that $z$
is \emph{graphic} i.e. there exists a graph $g$ s.t. $\forall
i:I(g_i)(g)=z_i$ for a fixed sequence of graphs $g_1,\ldots,g_r$? 

It is clear that the vector $z$ is graphic if $(E^{-1})^Tz \in
\{0,e_1,\ldots,e_{r}\}$, where $e_i$ are the elementary unit vectors. This is
because the rows of the $E$-transform are graphic vectors inside $\E$. The
resulting unit vector $e_j$ has the index $j$ of the corresponding graph
$g_j$. However if we aks which vectors $z$ are graphic corresponding to some
graph outside $\E$, the question is much more difficult. 

If we restrict to the trivial permutation group $G=1_G$, there exists many
more or less simple ways to characterize the graphic vectors. In particular
certain 'parity checks' can confirm this \cite{Tomi4}.

Could it be that the product formulas inside some moderate size $G$-poset imply
sufficient constraints for the small graph invariants?

We found negative answer to this. To be more precise, we list in Table \ref{taulu7} the distribution of values of $I(a_{12}a_{34})$ for
all positive integer vectors $z$ satisfying $z_1=|z|=7$ and
\begin{itemize}
\item[i] All multiplication constraints s.t. $|g_i|+|g_j|\leq 4$.
\item[ii] The correct distribution of $I(a_{12}a_{34})$ for graphs $G$ with
  $|G|=7$.
\end{itemize}
We used the $G$-poset $\E(8,4)$ to carry out the multiplications. Thus the
products $I(a_{12}a_{34})^2$, $I(a_{12}a_{34})$$I(a_{12}a_{13})$ and
$I(a_{12}a_{13})^2$ for instance are covered. The distributions are given by
enumerator polynomials s.t. the term $c_ix^i$ tells the number of vectors having $z_2=i$ is $c_i$.

It can be seen that the product constraints in the $G$-poset $\E(\infty,2d)$
are still insufficient to show that the invariants of degree less or equal to
$d$ are graphic. There are (two) vectors $z$ having $z_2=2$ corresponding to
the graph $a_{12}a_{34}$ which is impossible together with $z_1=7$.
\begin{table}[!htk]
\begin{small}
\caption{Distribution of positive integer solutions with constraints and the
  correct distribution.}\label{taulu7}
\begin{center}
\begin{tabular}{|l|l|}
\hline   
i: & $1 + 2x^2+ 8x^3+ 66x^4+ 322x^5+ 1245x^6+ 4029x^7+ 10748x^8+ 21092x^9$\\
     & $+ 28967x^{10}+ 28292x^{11}+ 18989x^{12}+ 8771x^{13}+ 3068x^{14}$\\
     & $+ 851x^{15}+ 203x^{16}+ 49x^{17}+ 10x^{18}+ 2x^{19}+ x^{20}+x^{21}$\\
ii: & $1 + x^4+ x^5+ 4x^6+ 4x^7+ 7x^8+ 11x^9+ 14x^{10}+ 18x^{11}+ 23x^{12}$\\
     & $+ 22x^{13}+ 21x^{14}+ 20x^{15}+ 13x^{16}+ 8x^{17}+ 5x^{18}+ 2x^{19}$\\
     & $+ x^{20}+ x^{21}$\\
\hline
\end{tabular}
\end{center}
\end{small}
\end{table}

To handle the general case with the permutation group $S_n$, we need to
develop a theory of local $G$-posets.

\subsection{Local Invariants}\label{sec:localinvariants}
It was shown above that the internal multiplication laws in $\E(\infty,d)$, 
i.e. the products $I(g)I(h)$ such that $|g|+|h| \leq d$, are not able to
characterize graphic vectors $z$ if $|z|>d$. 

Let $A \dunion B$ be the graph formed by a disjoint union of the graphs $A$
and $B$. Since $I(g)(A \dunion B)=I(g)(A)+I(g)(B)$ for connected invariants $I(g)$, we
may restrict ourselves to the case of graphic values of the connected graphs.

In Theorem $5$ of \cite{Tomi3}, we saw how $c=(E^{-1})^Tz$ implies the
existence of a suitable graph $\coprod c_i g_i$ if all $c_i \geq 0$. We state this in
the following proposition.
\begin{proposition}\label{gpro}
A sufficient condition for $z \in \mathbb{Z}_+^r$ to be graphic is
\begin{equation}\label{con223}
(E^{-1})^T z \geq 0,
\end{equation}
where $E$ is the $E$-transform of the connected graphs $g_1,\ldots,g_r$ in question.
\end{proposition}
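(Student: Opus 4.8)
The plan is to exhibit an explicit graph realizing the vector $z$, namely the disjoint union $\coprod_i c_i g_i$ where $c = (E^{-1})^T z$, and to verify that the basic graph invariants evaluated on this graph give back exactly $z$. First I would set $c = (E^{-1})^T z$, so that by hypothesis $c \in \mathbb{Z}_+^r$ (integrality follows from the explicit form $b_{ij} = (-1)^{|g_i|-|g_j|}e_{ij}$ of $E^{-1}$ for a complete $G$-poset, and nonnegativity is the assumption \eqref{con223}). Since all the $c_i$ are nonnegative integers, the formal object $H := \coprod_i c_i g_i$ — the graph consisting of $c_i$ disjoint copies of $g_i$ for each $i$ — is a genuine (if possibly disconnected) simple graph.

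The key step is to compute $I(g_j)(H)$ for each $j$ and show it equals $z_j$. Here I would use additivity of the subgraph-counting invariants over disjoint unions, $I(g)(A \dunion B) = I(g)(A) + I(g)(B)$, which holds because $g_1,\ldots,g_r$ are connected: a connected subgraph of $A \dunion B$ lies entirely in $A$ or entirely in $B$. Iterating this gives
\begin{equation}
I(g_j)(H) = \sum_{i=1}^r c_i\, I(g_j)(g_i) = \sum_{i=1}^r c_i\, e_{ij} = (E^T c)_j.
\end{equation}
But $E^T c = E^T (E^{-1})^T z = (E^{-1} E)^T z = z$, so $I(g_j)(H) = z_j$ for all $j$, which is precisely the statement that $z$ is graphic with witness $H$.

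I expect the only real subtlety — and hence the main point to get right rather than a genuine obstacle — is the additivity identity and the role of connectedness: one must be sure that every embedding of $g_j$ into $\coprod_i c_i g_i$ has image inside a single connected component, which is exactly where the hypothesis that the $g_i$ are connected enters. A secondary point worth a sentence is justifying that $c$ has integer entries; this is immediate from the closed form of the inverse $E$-transform recalled after Example \ref{ex:aaa}, valid because the relevant $G$-poset (generated by the submonomials of $g_1,\ldots,g_r$) is complete. Everything else is the linear-algebra manipulation $E^T (E^{-1})^T = (E^{-1}E)^T = \mathrm{Id}$ together with the reading of the matrix entries $e_{ij} = I(g_j)(g_i)$, both of which are routine.
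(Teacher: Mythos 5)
Your proposal is correct and follows exactly the paper's argument: the paper's entire proof is the one-line observation that the graph $\coprod c_i g_i$ with $c=(E^{-1})^Tz$ contains subgraphs according to $E^Tc=z$, and you have simply filled in the supporting details (additivity of connected-subgraph counts over disjoint unions and the integrality of $c$). No discrepancy to report.
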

\begin{proof}
The graph $\coprod c_i g_i$ contains subgraphs according to $E^T c=z$.
\end{proof}

\begin{example}
Sorted eigenvalues of the adjacency matrix $A$ are in 1-1 correspondence with the
coefficients of the characteristic polynomial.

These are clearly graph invariants since any permutation (or unitary)
transform $U^{-1}AU$ preserves the characteristic polynomial $det(A-zI)$.

The characteristic polynomial is
\begin{equation}
\sum_{\rho \in S_n} (-1)^\rho (a_{1 \rho(1)}-\delta(\rho(1)-1)z) \cdots (a_{n \rho(n)}-\delta(\rho(n)-n)z)
\end{equation}
which boils down to 
\begin{equation}
\sum_{\mathbf{p} \in \mathcal{P}(n)} \sum_{i=0}^{n_1} (-z)^{i+\mathbf{p}}
I(C_{p_1-i,p_2,p_3,\ldots,p_n}),
\end{equation}
where $C_{n_1,n_2,\ldots,n_r}$ is the graph containing $n_i$ disjoint cycles of lenght
$i$ and $\mathbf{p} \in \mathcal{P}(n)$ means looping over partitions of $n$.

The $G$-poset of these disjoint cycle graphs is easy. We get the
$E$-transform by
\begin{equation}
I(C_{n_1,\ldots,n_r})(C_{m_1,\ldots,m_r}) = \prod_i {m_i \choose n_i}.
\end{equation}
Thus by multiplying this $E$-transform with appropriate coefficients we obtain
the graphic values for the coefficients of the characteristic polynomials. We
leave more explicit characterization, analogous to Proposition \ref{gpro}, as
a research problem. 
\end{example}

The condition (\ref{con223}), however, is not \emph{necessary} but it can be
extended by breaking the group $G$ into smaller parts. 

The subgroups which we consider are the stabilizers $Stab_{S_n}(S)$ of
sequences of vertices $S$. As the stabilizer subgroups are also permutation
groups, all the results earlier in this paper apply for the
\emph{local invariants} defined below.

Let 
\begin{equation}
I_S(g)=\sum_{\rho \in \mathrm{Stab}_{S_n}(S)/\mathrm{Stab}(g)}   \rho(g),
\end{equation}
where $S$ is a sequence of vertices in $g$, $g$ is a monomial representing a graph and
$\mathrm{Stab}_{S_n}(S)$ is a point-wise stabilizer of the sequence $S$. Thus the
sum is over all permutations of $g$ which fix the vertices in $S$. We will call
the indices in $S$ \emph{fixed points}.

Notice that by using a permutation $\pi_{S,T}$ mapping $S \rightarrow T$, we are
able to evaluate the invariant in different locations of the target graph.

We denote by $I_S(g)^{\pi_{S,T}}$ the invariant $I_S(g)$ evaluated in fixed
vertices $T$ in $h$. Also we denote by $g(S)$ the partially labeled graph $g$
having the fixed points $S$.

For instance 
\begin{equation}
I_1(a_{12}a_{13})^{\pi_{1,4}}( a_{45}a_{46}a_{47})= I_1(a_{12}a_{13})( a_{45}a_{46}a_{47}^{\pi_{1,4}^{-1}} )=3. 
\end{equation}

Let $g$ be a labeled graph $(V,E)$, where $V=\{1,2,\ldots,n\}$. Let $V_k$ be
the set of ordered subsets of $V$ of cardinality $k$. We define an equivalence
relation $\equiv_g$ on $V_k$ as follows. For $S,T \in V_k$ $S \equiv_g T$ if
there exists $\rho \in \mathrm{Stab}_{S_n}(g)$ such that
$g(S)^\rho=g(T)$. 

\begin{lemma}\label{palautus}
Let $g$ be a graph in $\E(n)$ and let $V=\{1,2,\ldots,n\}$ be the set
of vertices. We can restore the global invariant $I(g)$ in two ways:
\begin{itemize}
\item[i]
We have
\begin{equation}
\sum_{T \in V_{|S|}} I_S(g)^{\pi_{S,T}} = |Orb_{\mathrm{Stab}(g)}(S)| I(g),
\end{equation}
where $Stab(g)=\{ \pi \in S_{|S|} | g^\pi=g\}$ and $S_{|S|}$ is the symmetric permutation group permuting the fixed points.
\item[ii]
We have
\begin{equation}
\sum_{S \in V_{|S|}/\equiv_g} I_S(g)=I(g),
\end{equation}
where the equivalence relation $\equiv_g$ is defined above.
\end{itemize}
\end{lemma}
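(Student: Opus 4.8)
The two formulas are identities between polynomials in $\mathbb{C}[a_{ij}]$, and every monomial that can occur on either side lies in the $S_n$-orbit of $g$; so the plan is, in both parts, to fix one monomial $m=\tau(g)$ of $I(g)$ (with $\tau$ well defined modulo $\mathrm{Stab}_{S_n}(g)$) and to compare its coefficient on the two sides. Because everything in sight is equivariant, the count comes out the same for every $\tau$, and that common value is what the right-hand sides predict.

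For part (i): first I would check that, for a fixed target sequence $T\in V_{|S|}$, the monomial $m$ occurs in $I_S(g)^{\pi_{S,T}}$ at most once — two transversal elements of $I_S(g)^{\pi_{S,T}}$ that yield the same monomial differ by an element of $\mathrm{Stab}_{S_n}(S)\cap\mathrm{Stab}_{S_n}(g)$ and hence index the same coset. Then I would identify when $m$ does occur: exactly when the copy $m$ can be rooted at $T$ so as to match the rooted pattern $g(S)$, i.e.\ when $T=\sigma(S)$ for some $\sigma$ in the coset $\tau\cdot\mathrm{Stab}_{S_n}(g)$. Summing over all $T$, the coefficient of $m$ in $\sum_T I_S(g)^{\pi_{S,T}}$ is the number of distinct such $T$, which equals $|\{\sigma(S):\sigma\in\tau\,\mathrm{Stab}_{S_n}(g)\}|=|Orb_{\mathrm{Stab}(g)}(S)|$, the number of placements of the fixed-point sequence inside one copy of $g$, independent of $\tau$. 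This gives $\sum_T I_S(g)^{\pi_{S,T}}=|Orb_{\mathrm{Stab}(g)}(S)|\,I(g)$.

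For part (ii): fix $m=\tau(g)$ again. The monomial $m$ occurs in $I_S(g)$ precisely when there is $\rho\in\mathrm{Stab}_{S_n}(S)$ with $\rho(g)=m$, i.e.\ when the copy $m$, at its actual vertex positions, already carries a rooting at $S$ isomorphic to $g(S)$. I would then use the definition of $\equiv_g$ to (a) see that the sum over the transversal $V_{|S|}/\equiv_g$ is well posed — moving from $S$ to an $\equiv_g$-equivalent $S'$ carries $I_S(g)$ to $I_{S'}(g)$ via the automorphism of $g$ witnessing $S\equiv_g S'$ — and (b) show that, across a single $\equiv_g$-class, the copy $m$ is accounted for with total multiplicity one, so that summing over all classes charges each monomial of $I(g)$ exactly once; summing over all $m$ in the orbit of $g$ then yields $I(g)$.

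In both parts the at-most-once bookkeeping and the coset juggling are routine; the genuine difficulty is keeping consistent the triangle of group-theoretic data in play — the transversals defining the local invariants $I_S(g)$, the double cosets $\mathrm{Stab}_{S_n}(S)\backslash S_n/\mathrm{Stab}_{S_n}(g)$, and the classes of $\equiv_g$ — bearing in mind that isolated vertices of $g$ are also legal fixed points. In (i) this is tame because we range over all root positions $T$, so automorphisms of $g$ merely permute the terms; in (ii) one must lean on the fact that $\equiv_g$ was set up so that inequivalent $S$ give genuinely different rooted patterns of $g$, and establishing the multiplicity-one statement there is the crux.
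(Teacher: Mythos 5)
Your part (i) is essentially the paper's own argument: both proofs expand the left-hand side as a double sum, fix one monomial $m=\tau(g)$ of $I(g)$, and identify its coefficient with the number of root positions $T$ lying in $\tau\bigl(Orb_{\mathrm{Stab}(g)}(S)\bigr)$; your ``at most once per $T$'' check is the same observation that the inner sum has unit coefficients. That part is fine.

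Part (ii) has a genuine gap, and it sits exactly at the step you yourself call the crux. Under the reading you adopt --- $g$ a fixed labeled monomial, $S$ running over a transversal of $V_{|S|}/\equiv_g$, each class contributing the local invariant $I_S(g)$ anchored at the absolute position $S$ --- the multiplicity-one claim in your step (b) is false. Take $n=3$, $g=a_{12}$, $|S|=1$: the classes are $\{1,2\}$ and $\{3\}$, and $I_1(a_{12})+I_3(a_{12})=(a_{12}+a_{13})+a_{12}=2a_{12}+a_{13}$, which is not $I(a_{12})=a_{12}+a_{13}+a_{23}$; the monomial $a_{12}$ is charged twice and $a_{23}$ never. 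Your step (a) signals the trouble rather than resolving it: the automorphism witnessing $S\equiv_g S'$ carries $I_S(g)$ to $I_{S'}(g)$, but these are \emph{different} polynomials (here $I_1(a_{12})=a_{12}+a_{13}$ versus $I_2(a_{12})=a_{12}+a_{23}$), so the transversal sum is not even independent of the chosen representatives. The identity that is actually true --- and that the paper's one-sentence proof asserts --- is a double-coset decomposition with the anchor held fixed and the rooted pattern varying: the monomials $\tau(g)$ of $I(g)$ are partitioned by the double cosets $\mathrm{Stab}_{S_n}(S)\,\tau\,\mathrm{Stab}_{S_n}(g)$, these double cosets are in bijection with $V_{|S|}/\equiv_g$ via $\tau\mapsto\tau^{-1}(S)$, and the block indexed by the class of $T$ is exactly the monomial set of $I_S\bigl(g^{\pi_{T,S}}\bigr)$, the local invariant at the fixed anchor $S$ of the pattern ``$g$ rooted at $T$''. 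In the example this reads $I(a_{12})=I_1(a_{12})+I_1(a_{23})$. To repair your argument you must fix the anchor and let the rooted pattern range over the classes; as written, step (b) cannot be established because the statement it needs is not true.
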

\begin{proof}
To prove the first part we write the sum as
\begin{equation}
\sum_{\pi \in S_n/S_{n-|S|}} \sum_{\rho \in S_n/\mathrm{Stab}(g,S)} a^{\rho \pi},
\end{equation}
where $a$ is some monomial with unit coefficient in $R[a_{ij}]$ representing the graph $g$ and the
fixed points $S$ are determined by this particular labeling.

In the latter sum the coefficient of each monomial is clearly one. Thus the
coefficient in the total sum of the monomial, say $a$, is the number of
choices of $\pi$ s.t. $a^\pi=a$. Thus the coefficient equals
\begin{equation}
|\{\pi \in S_n/S_{n-|S|} | a^\pi=a \}|.
\end{equation}
Notice this is independent with respect to different isomorphic choices of
$a$.

Since the set $S_n/S_{n-|S|}$ corresponds to the set $Orb_{S_n}(S)$, we have 
that $\{S^\pi | \pi \in S_n/S_{n-|S|}\}$ equals $Orb_{S_n}(S)$ and therefore
\begin{equation}
|\{\pi \in S_n/S_{n-|S|}|a^\pi=a\}|=|Orb_{\mathrm{Stab}(g)}(S)|.
\end{equation}
The second part is a sum over cosets of $\equiv_g$ where each coset consists
of all monomials in $I_S(g)$.
\end{proof}

\begin{figure}[!htk]
\begin{center}
\includegraphics[width=3cm,height=1cm]{./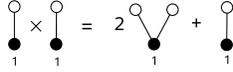}
\end{center}
\vspace{-0.5cm}
\caption{Product of local invariants.}\label{fig:1sepa}
\end{figure}

\begin{example}
Figure \ref{fig:1sepa} shows how the normally connected invariants like
$I(a_{12}a_{13})$ behave like unconnected graphs when the group is broken by stabilizing
the vertex $1$. We can solve 
\begin{equation}
I_1(a_{12}a_{13})=\left( I_1(a_{12})^2-I_1(a_{12}) \right)/2
\end{equation}
for simple graphs.
\end{example}

This happens in general and can be stated as follows.

Let us call a (partially labeled) graph $g(S)$ such that $g(S) \neq g(S)\cap S$, $S$-\emph{connected}
($S$-\emph{unconnected} in the opposite case) if for each pair of vertices
$i,j \in g(S)$, where at least the other, say $i \notin S$, there is a path
connecting $i$ and $j$ without traveling through the vertices in $S$. 

For $A \subseteq S$ let $\chi_A$ denote the set of $A$-connected graphs which are not $A\cup
\{i\}$-connected for any $i \in S \setminus A$.

\begin{proposition}\label{pro:lgene}
All local invariants with the fixed points $S$ are generated/separated by
the $A$-connected invariants, where $A \subseteq S$, together with the internal
edges $a_{ij}$, where $i,j \in S$.
\end{proposition}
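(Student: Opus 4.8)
The plan is to show that an arbitrary local invariant $I_S(g)$, for a partially labeled graph $g(S)$, can be written as a polynomial in the internal edge variables $a_{ij}$ with $i,j\in S$ and the $A$-connected invariants $I_A(c)$ with $A\subseteq S$ and $c\in\chi_A$. First I would decompose the partially labeled graph $g(S)$ itself: remove the internal edges among the vertices of $S$ (these contribute the monomial $\prod_{i,j\in S, \{i,j\}\in E} a_{ij}$, which is invariant under $\mathrm{Stab}_{S_n}(S)$ since $S$ is fixed pointwise), and then look at the remaining graph on $V\setminus S$ together with its attachments to $S$. The key structural observation is that this remaining graph falls apart into $S$-connected components in a canonical way: each component, being $S$-connected, touches some minimal subset $A\subseteq S$ through which it cannot be further separated, so it is $A$-connected but not $A\cup\{i\}$-connected for any $i\in S\setminus A$, i.e. it lies in $\chi_A$. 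Thus $g(S)$ is, after stripping internal edges, a disjoint-over-$S$ union of pieces each belonging to some $\chi_A$.

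The second step is to handle the ``disjoint union over $S$'' operation at the level of invariants, which is the local analogue of the classical fact $I(g\amalg h) = $ (a polynomial in $I$ of connected pieces), i.e. equation~(\ref{eq:29}) applied inside the stabilizer group $\mathrm{Stab}_{S_n}(S)$. Because all the earlier results of the paper are stated for arbitrary permutation groups $G\subseteq S_N$, I may apply the product formula~(\ref{eq:29}) verbatim with $G=\mathrm{Stab}_{S_n}(S)$: the product $I_S(g_1)I_S(g_2)$ is a linear combination of $I_S(g_k)$ over partially labeled graphs $g_k$ with $cv(g_1)+cv(g_2)$ vertices or fewer (the fixed points $S$ being common to all). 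Iterating, the invariant of a multi-component partially labeled graph is a polynomial in the invariants of its individual $S$-connected components (plus the internal-edge monomial, which multiplies through as a scalar on each orbit). This reduces the claim to: every $A$-connected invariant $I_A(c)$ with $A\subseteq S$ is generated by the $\chi_A$-invariants together with internal edges — but by the definition of $\chi_A$, an $A$-connected graph that is not in $\chi_A$ is $A\cup\{i\}$-connected for some $i$, so viewing its fixed-point set as the larger $A\cup\{i\}$ and recursing (induction on $|S|-|A|$, base case $A=S$ where $\chi_S$-invariants are the $S$-connected ones by definition) finishes it. Care is needed here because enlarging the fixed-point set from $A$ to $A\cup\{i\}$ changes the averaging group, so I should invoke Lemma~\ref{palautus}(ii) to re-express $I_A(c)$ as a sum $\sum_{S'} I_{S'}(c)$ over representatives $S'\in V_{|A|+1}/\!\equiv_c$ with $A\subseteq S'$, each term now having a strictly larger fixed-point set.

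The main obstacle I anticipate is bookkeeping the interaction between the three operations — stripping internal edges, splitting into $S$-connected components, and enlarging the fixed-point set — so that the recursion is genuinely well-founded and does not reintroduce components that were already processed. Concretely, when I pass from $A$ to $A\cup\{i\}$ via Lemma~\ref{palautus}(ii), the pieces of $c$ may re-split, and I need to argue that each resulting piece is strictly ``smaller'' in a well-ordering (e.g. lexicographic on $(|S|-|A|,\ \text{number of non-fixed vertices})$) so that the induction terminates at the $A$-connected-but-not-further-separable level, which is exactly $\chi_A$. A secondary subtlety is that the coefficients produced by~(\ref{eq:29}) inside $\mathrm{Stab}_{S_n}(S)$ depend on the embedding multiplicities $e_{ij}=I_S(g_j)(g_i)$ computed \emph{with} fixed points, not the global ones; but since I only need \emph{generation} (membership in the subalgebra), not an explicit formula, these coefficients need not be pinned down. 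Everything else — that internal edges are $\mathrm{Stab}_{S_n}(S)$-invariant, that $S$-connectedness partitions the non-fixed part canonically, and that the product formula applies to stabilizer groups — is immediate from the definitions and the already-established results.
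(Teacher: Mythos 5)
Your proposal follows the paper's own proof in its essentials: strip the internal edges $\prod_{i,j\in S}a_{ij}$ (which are $\mathrm{Stab}_{S_n}(S)$-invariant), decompose $g(S)\setminus_e S$ canonically into components each lying in $\chi_A$ for its minimal attaching set $A\subseteq S$, and then use the product machinery for the stabilizer group to pass from separation by component multiplicities to generation --- the paper does this last step by citing Theorem~5 of \cite{Tomi3} with the group $\mathrm{Stab}_G(S)$, whereas you unfold it into an explicit (triangular) application of the product formula~(\ref{eq:29}), which is the same mechanism. The one divergence is your closing recursion on enlarging $A$ to $A\cup\{i\}$ via Lemma~\ref{palautus}(ii): it is unnecessary, since the components produced by the decomposition already lie in $\chi_A$ and are in particular $A$-connected, which is all the proposition asks for, so the well-foundedness worries you raise about that step do not arise.
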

\begin{proof}
It is clear that the edges $a_{ij}$ inside $S$ generate/separate the graph
$g(S) \cap S$. Let $g(S) \setminus_e S$ denote the graph obtained by removing
the internal edges.

To the rest of the graph it suffices to write $g(S) \setminus_e S$ into the representation 
\begin{equation}
g(S) \setminus_e S \leftrightarrow \sum_{A \subseteq S} \sum_{h \in \chi_A } n_h^A h(A),
\end{equation}
where $n_h^A$ is the number of components $h(A)$ in $g(S)\setminus_e
S$. Theorem $5$ in \cite{Tomi3} now applies with the permutation group
$Stab_G(S)$ and implies that these graphs which are separated, are also
generated by the $S$-connected invariants. Finally the whole invariant is
recovered by $I_S(g)=I_S(g \setminus_e S)\prod_{i,j \in S} a_{ij}$.
\end{proof}

\begin{example}
Let $S=\{1,2,3\}$ and $g(S)=a_{14}a_{15}a_{16}a_{26}a_{37}a_{38}a_{78}$, where
the interior has already been removed for simplicity. The representation is
\begin{displaymath}
g(S) \leftrightarrow (2 a_{14})_1 + (a_{14}a_{24})_{1,2} + (a_{34}a_{35}a_{45})_3,
\end{displaymath}
where we have used several times the same labels for the vertices outside $S$
and the subscripts indicate the subset $A$ of fixed vertices.
\end{example}

The next lemma is needed in generalization of Proposition \ref{gpro}. Let
$g\setminus_e S$ be the graph $g$ with the internal edges in $S$
removed. We do not remove the edges where only one of the end points belongs
to $S$. Let $g\setminus S$ denote the graph with all the vertices in $S$
removed together with the edges in connection to $S$. We also denote by $A
\coprod_S B$ the graph formed by disjoint union of graphs $A$ and $B$ s.t. the
vertices in $S$ coincide. For instance $a_{12}a_{23} \coprod_1 a_{13} = a_{12}a_{23}a_{14}$.

\begin{lemma}\label{lem:summ}
An $S$-connected invariant of form $g=h \setminus_e S$, where $g \neq \emptyset$ satisfies
\begin{equation}
I_S(g)(A \coprod_S B)=I_S(g)(A) + I_S(g)(B).
\end{equation}
\end{lemma}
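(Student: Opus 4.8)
The plan is to read the local invariant combinatorially: $I_S(g)(X)$ is the number of subgraphs $g'$ of $X$ for which there is an isomorphism $g(S)\to g'$ fixing every vertex of $S$ pointwise; equivalently, $g'$ has vertex set $S\cup T$ with $T\subseteq V(X)\setminus S$ and edge set the image of the edges of $g$ under such an isomorphism. (This is exactly what the orbit-sum $I_S(g)=\sum_{\rho\in\mathrm{Stab}_{S_n}(S)/\mathrm{Stab}(g)}\rho(g)$ evaluates to on $X$, each coset contributing one distinct image subgraph.) With this reading the asserted identity becomes the statement that the copies of $g(S)$ in $A\coprod_S B$ are exactly the copies of $g(S)$ that lie in $A$ together with those that lie in $B$, these two families being disjoint.

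First I would extract the structural content of $S$-connectedness. Since $g=h\setminus_e S\neq\emptyset$ has no edge inside $S$, every edge of $g$ meets $V(g)\setminus S$, so $g$ has at least one non-fixed vertex. For any two non-fixed vertices $i,j$ of $g$, the defining property of $S$-connectedness produces a path from $i$ to $j$ not passing through $S$, i.e. a path in $g\setminus S$; hence $g\setminus S$ is connected and non-empty. On the target side, the non-fixed vertices of $A\coprod_S B$ are $(V(A)\setminus S)\sqcup(V(B)\setminus S)$, and by construction no edge of $A\coprod_S B$ joins these two sets; therefore every connected component of $(A\coprod_S B)\setminus S$ is contained either in $V(A)\setminus S$ or in $V(B)\setminus S$.

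Now take a copy $g'$ of $g(S)$ in $A\coprod_S B$ with non-fixed vertex set $T$. Since $g'\setminus S\cong g\setminus S$ is connected and non-empty, $T$ lies in a single connected component of $(A\coprod_S B)\setminus S$, so $T\subseteq V(A)\setminus S$ or $T\subseteq V(B)\setminus S$, and exactly one of these holds. In the first case every edge of $g'$ has the form $\{s,t\}$ with $s\in S,\ t\in T$, or $\{t,t'\}$ with $t,t'\in T$ — here I use the hypothesis $g=h\setminus_e S$ to know $g'$ has no internal $S$-edge — and each such edge of $A\coprod_S B$ with both endpoints in $V(A)$ is an edge of $A$; hence $g'$ is a copy of $g(S)$ inside $A$, and conversely every copy of $g(S)$ in $A$ is a copy in $A\coprod_S B$. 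Symmetrically for $B$. Summing the two cases yields $I_S(g)(A\coprod_S B)=I_S(g)(A)+I_S(g)(B)$.

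The one point that needs care, and the only place the hypothesis $g=h\setminus_e S$ is really used, is the final identification: $A$ need not be an induced subgraph of $A\coprod_S B$, because an internal $S$-edge of $B$ that is absent from $A$ survives in $A\coprod_S B$. The argument works only because a copy of $g(S)$ never uses an internal $S$-edge, so no such spurious edge can enter a copy; this is exactly why the lemma is stated for $g$ of the form $h\setminus_e S$. Everything else is the localized version of the standard observation that a connected subgraph cannot straddle a vertex separation, the separating set here being the fixed points $S$, so I do not expect any further obstacle.
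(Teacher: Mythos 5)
Your proof is correct and follows essentially the same route as the paper's: the decisive step in both is that an $S$-connected copy of $g$ cannot have non-fixed vertices in both $A\setminus S$ and $B\setminus S$, since any path between them in $g\setminus S$ would have to cross $S$ inside $A\coprod_S B$. Your version is in fact slightly more complete, handling the single-non-fixed-vertex case uniformly (the paper treats it separately by a direct computation of the orbit sum) and making explicit the check that a copy whose non-fixed vertices lie in $V(A)$ uses only edges of $A$ --- the point where the hypothesis $g=h\setminus_e S$ genuinely enters and which the paper leaves implicit.
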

\begin{proof}
Consider first the case where $g$ contains only one vertex $v$ outside $S$. Then
$g$ is of form $a_{s_1,v}a_{s_2,v} \cdots a_{s_l,v}$, $s_j \in S$. Let $a(b)$ denote the
evaluation of the monomial $a$ in $b$. Then the equation is
satisfied since 
\begin{eqnarray}
\sum_{\rho \in S_n/\mathrm{Stab}(g,S)}a^\rho(A \coprod_S B) &=& \sum_{v \notin S}
a_{s_1,v}a_{s_2,v}\cdots a_{s_l,v}(A \coprod_S B) \\ \nonumber
&=& \sum_{v \in A\setminus S} a_{s_1,v}a_{s_2,v}\cdots a_{s_l,v}(A) +\sum_{v \in
  B\setminus S}
a_{s_1,v}a_{s_2,v}\cdots a_{s_l,v}(B).
\end{eqnarray}

If $g$ contains at least two vertices $i,j \notin S$, then by definition of
$S$-connectedness we have that the distance $d(i,j)<\infty$, where the
distance $d(i,j)$ is the length of the shortest path between $i$ and $j$ in
$g\setminus S$.

Suppose $A \coprod_S B$ had a subgraph $g'(S)$ isomorphic to $g(S)$ having two
vertices $i'$ and $j'$ s.t. $i' \in A\setminus S$ and $j'\in B\setminus S$. Then the distance
$d(i',j')=\infty$ since the only connections must go through the vertices in
$S$. Thus $A \coprod_S B$ contains no subgraphs isomorphic to $g(S)$ contained
in both components $A$ and $B$. Thus we may sum them separately and the result follows.
\end{proof}

Let us call $g(S) \cap S$ \emph{the interior} of the local graph $g(S)$.
Let $\E_S=\{g_1(S)$, $g_2(S)$, $\ldots$, $g_r(S)\}$ be an $G$-poset of $S$-connected
local invariants such that $g_i(S) \setminus_e S \neq \emptyset$ with the same interior i.e. $g_i(S) \cap S = g_j(S)\cap S$. We denote by $z(S)$ the vector of local invariants with the fixed
points $S$:
\begin{equation}
z(S)=[I_S(g_1),I_S(g_2),\ldots,I_S(g_r)]^T.
\end{equation}

The following result generalizes Proposition \ref{gpro}.
\begin{proposition}\label{pro:nesu}
The local vector $z(S) \in \mathbb{Z}_+^r$ is graphic if
\begin{equation}
(E_S^{-1})^T z(S) \in \mathbb{Z}_+^r.
\end{equation}
\end{proposition}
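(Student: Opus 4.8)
The strategy is to mirror the proof of Proposition \ref{gpro} but carry it out inside the permutation group $G=\mathrm{Stab}_{S_n}(S)$ rather than $S_n$, using the fact (noted at the start of Section \ref{sec:localinvariants}) that all the $G$-poset machinery applies verbatim to stabilizer subgroups. First I would set $c=(E_S^{-1})^T z(S)$, so that by hypothesis $c\in\mathbb{Z}_+^r$, and recall that the rows of $E_S$ are exactly the vectors $I_S(g_i)(g_j(S))$ recording how many copies of each $g_i(S)$ sit inside each $g_j(S)$. The aim is to exhibit an actual (partially labeled) graph $H(S)$, with the prescribed interior $g_i(S)\cap S$, such that $I_S(g_i)(H)=z(S)_i$ for all $i$.

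The natural candidate is the graph obtained by gluing $c_i$ disjoint copies of each $g_i(S)$ along the common fixed-point set $S$; in the notation of Lemma \ref{lem:summ} this is $H(S)=\coprod_{S}\,c_i\,g_i(S)$, with the shared interior attached once. The key step is then to check that $I_S(g_k)(H)$ is given by $E_S^T c$. By Lemma \ref{lem:summ}, each $S$-connected local invariant $I_S(g_k)$ is additive over $\coprod_S$, so $I_S(g_k)(H)=\sum_i c_i\, I_S(g_k)(g_i(S))=\sum_i c_i (E_S)_{ik}=(E_S^T c)_k$. Since $c=(E_S^{-1})^T z(S)$ gives $E_S^T c = z(S)$, we get $I_S(g_k)(H)=z(S)_k$ as required, and $H$ has the correct interior by construction because every $g_i(S)$ was assumed to share the interior $g_i(S)\cap S$. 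One should also remark that the condition $g_i(S)\setminus_e S\neq\emptyset$ guarantees the summands genuinely contribute vertices outside $S$, so the gluing is well defined and the additivity hypothesis of Lemma \ref{lem:summ} is met.

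The main obstacle is making precise that the glued graph $H(S)$ is a legitimate object in the relevant local $G$-poset and that no unintended copies of some $g_k(S)$ appear across different summands — that is, that $I_S(g_k)$ really does not see subgraphs straddling two components that meet only in $S$. This is exactly the content of the second half of the proof of Lemma \ref{lem:summ} (an $S$-connected pattern cannot have one vertex in $A\setminus S$ and another in $B\setminus S$, since any path between them would have to pass through $S$), so the burden is to invoke it cleanly for arbitrary multiplicities $c_i$ and possibly repeated isomorphism types, rather than to prove anything new. Once that bookkeeping is in place the proposition follows, and it is worth noting the statement is only a sufficient condition: as in Proposition \ref{gpro}, failure of $(E_S^{-1})^T z(S)\geq 0$ does not preclude $z(S)$ from being graphic.
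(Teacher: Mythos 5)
Your proposal is correct and follows essentially the same route as the paper: take the multiplicities $c=(E_S^{-1})^Tz(S)$, form the glued graph $\coprod_S c_i g_i(S)$, and use the additivity of $S$-connected invariants from Lemma \ref{lem:summ} to verify $E_S^Tc=z(S)$. Your extra remarks about the shared interior and the impossibility of straddling copies are precisely the content already supplied by Lemma \ref{lem:summ}, so nothing new is needed.
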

\begin{proof}
Write a graph $h$ in form $h=\coprod_S n_ig_i$, where $n_i\in \mathbb{Z}_+$
are the multiplicities of the connected components in $\E_S$ of $h$. According
to Lemma \ref{lem:summ} we have
\begin{equation}
I_S(g_j)(h)=\sum n_i e_{ij},
\end{equation}
and we have $z(S)=E^T[n_1,\ldots,n_r]^T$ and $[n_1,\ldots,n_r]^T =
(E^T)^{-1}z(S)$. Thus whenever we can find non-negative multiplicities $n_1,\ldots,n_r$, there
is the corresponding graph $\coprod_S n_ig_i$.
\end{proof}

Let the local $G$-poset $\E_S$ be decomposed as $\E_S=\mathcal{C}_S \cup
\mathcal{U}_S$, where $\mathcal{C}_S=\{g_1(S),\ldots,g_r(S)\}$ is the local $G$-poset of $S$-connected
graphs in $\E_S$ with the same interior and $\mathcal{U}_S$ is the $G$-poset of
$S$-unconnected graphs. Let $C_S$ be the $E$-transform of $\mathcal{C}_S$.

Notice that the $S$-connected invariants parametrize all the graphs in $\E_S$
together with the internal edges. Thus once we have the parameters
$I_S(g_1),\ldots,I_S(g_r)$ we can solve the
multiplicities
\begin{equation}
m = (C_S^{-1})^T [I_S(g_1),\ldots,I_S(g_r)]^T
\end{equation}
which give the representation $\coprod_S m_ig_i$ for all graphs in $\E_S$
with the same interior as the connected graphs $g_i(S)$ have.

\begin{proposition}
In the local $G$-poset the $S$-unconnected invariants with the same interior
can be evaluated by the following recursion:
\begin{eqnarray}
I(\coprod_S n_i g_i)(\coprod_S m_i g_i) = \\ \nonumber
\sum_{g^* \leq \coprod_S n_i g_i,g^*   \subseteq G^* } I(\coprod_S n_i g_i
\setminus g^*)(\coprod_S m_i g_i \setminus G^*)I(g^*)(G^*),
\end{eqnarray}
where $G^*$ is any $S$-connected component in $\coprod_S m_i g_i$ with the positive
coefficient $m_{G^*}$ and the sum is over all $g^* \subseteq G^*$ such that $g^*=\coprod_S
p_i g_i$ satisfies $\forall i:p_i \leq n_i$.
\end{proposition}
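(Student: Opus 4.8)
The plan is to prove the identity by a bijection that localizes, relative to the fixed points $S$, the usual rule for splitting a subgraph of a disjoint union according to which component each of its pieces lands in. First I would normalize away the common interior: since the source $\coprod_S n_i g_i$, the target $\coprod_S m_i g_i$, and every $g_i$ carry the same interior $g_i(S)\cap S$, Proposition \ref{pro:lgene} lets me factor the internal edges out of the local invariant on both sides, so I may assume every $g_i$ is $S$-connected with no internal edges, exactly the setting of Lemma \ref{lem:summ}. After this reduction every edge of the target $H:=\coprod_S m_i g_i$ has an endpoint outside $S$; fixing an $S$-connected component $G^*$ of $H$ with $m_{G^*}>0$, we may write $H=G^*\coprod_S H'$ with $H':=H\setminus G^*$, where every edge of $H$ lies in exactly one of $G^*$ and $H'$ while the two share only the edgeless vertex set $S$.

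The structural fact I would record next is that an $S$-connected subgraph of $H$ cannot meet both $G^*\setminus S$ and $H'\setminus S$: the same argument as in the proof of Lemma \ref{lem:summ} shows such a subgraph would provide a path between a vertex of $G^*\setminus S$ and a vertex of $H'\setminus S$ avoiding $S$, whereas in $H$ every such path must pass through $S$. Hence every $S$-connected component of \emph{any} subgraph $\tilde F\subseteq H$ lies entirely in $G^*$ or entirely in $H'$.

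Now the bijection. Given $\tilde F\subseteq H$ with $\tilde F\cong\coprod_S n_i g_i$ as $S$-fixed local graphs, its $S$-connected components are copies of the $g_i$, $n_i$ of each type $i$; by the previous step each lies in $G^*$ or in $H'$, so writing $\tilde g^*:=\tilde F\cap G^*$ for the union of those in $G^*$ gives $\tilde g^*\cong\coprod_S p_i g_i=:g^*$ with $g^*\subseteq G^*$ and $p_i\leq n_i$, and $\tilde F\cap H'\cong\coprod_S (n_i-p_i)g_i=F\setminus g^*$. Conversely, from an isomorphism type $g^*=\coprod_S p_i g_i$ with $p_i\leq n_i$ and $g^*\subseteq G^*$, a subgraph of $G^*$ isomorphic to $g^*$, and a subgraph of $H'$ isomorphic to $F\setminus g^*$, the union of the latter two is a subgraph of $H$; since they meet only in $S$ it is the local disjoint union, hence isomorphic to $g^*\coprod_S(F\setminus g^*)=\coprod_S n_i g_i$. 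That these assignments are mutually inverse is routine, the load-bearing points being that the edges of $\tilde F$ partition across $G^*$ and $H'$ and that $S\subseteq\tilde g^*$ (so the $H'$-part of a recombined subgraph meets $G^*$ only inside $\tilde g^*$). Counting the three families in the bijection then yields $I_S(\coprod_S n_i g_i)(H)=\sum_{g^*} I_S(g^*)(G^*)\,I_S(\coprod_S n_i g_i\setminus g^*)(H')$ with the sum over types $g^*=\coprod_S p_i g_i$, $p_i\leq n_i$, $g^*\subseteq G^*$, which is the claimed recursion; it terminates since each term has a strictly smaller source or a strictly smaller target, bottoming out at $I_S(g_i)(g_j)=e_{ij}$ and $I_S(\emptyset)(\cdot)=1$.

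The main obstacle I anticipate is not depth but bookkeeping around $S$: one must consistently treat ``subgraph'' as ``$S$-fixed local subgraph'' (so it always contains all of $S$), verify that the interior edges genuinely cancel so the $G^*/H'$ edge partition is exact, and check that recombining two pieces neither manufactures a spurious $S$-connection between them nor destroys one --- each of which reduces to the single observation that $G^*$ and $H'$ share only the vertices of $S$ and no edges once the common interior has been removed.
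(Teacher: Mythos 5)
Your proof is correct and follows essentially the same route as the paper's: the paper's (much terser) argument likewise splits the target into $G^*$ and $\coprod_S m_i g_i \setminus G^*$ and classifies each copy of the source by how it distributes over the two parts, in analogy with $\binom{n}{k}=\binom{n-1}{k}+\binom{n-1}{k-1}$. You simply supply the details the paper leaves implicit, in particular the reduction of the common interior and the observation (via the argument of Lemma \ref{lem:summ}) that no $S$-connected component of a subgraph can straddle $G^*$ and the remainder.
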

\begin{proof}
This recursion is analogous to the identity ${n \choose k} = {n-1 \choose
  k}+{n -1 \choose k-1}$. 

The idea is to divide the graph $\coprod_S m_i g_i$ in two parts, the graph
$\coprod_S m_i g_i - G^*$ and the graph $G^*$. Then consider separately how many
times $\coprod_S n_i g_i$ is contained in both of these and the result
follows. 
\end{proof}

By this recursion we are able to reduce the problem of evaluating 
\begin{equation}
I(\coprod_S n_ig_i)(\coprod_i m_ig_i)
\end{equation}
to the problem of determining $I(\coprod_S n_ig_i)(g_k)$, where $g_k$ is $S$-connected. 

\subsection{Very Restricted Case}\label{sec:lok}
Before characterizing the local invariants in different locations we show
a restricted class of graphs which allow reconstruction of the local
parameters from the global vector $z$. 

In the following $\E_1$ denotes the $G$-poset of $1$-connected local graphs.
\begin{definition}\label{def:res}
The graph $g$ is
$\E_1$-\emph{restricted} iff 
\begin{itemize}
\item[i] for every vertex $v\in g$ there is one maximal connected local graph $h \in \E_1$ satisfying
  $I_1(h)^{\pi_{1v}}(g)=1$ and
\item[ii] all the other local invariants in $\E_1$ at $v$ can be obtained by
  $I_1(k)^{\pi_{1v}}(g)=I_1(k)(h)$.
\end{itemize}
\end{definition}

This condition is needed to avoid the situation where two local graphs
$h,k$ satisfy $I_1(h)^{\pi_{1v}}(g)=1$ and $I_1(k)^{\pi_{1v}}(g)=1$, whereas 
$I_1(k)(h)=0$ and $I_1(h)(k)=0$. This is crucial in the
proposition below to compute the local parameters from $z$.

Let $z(i)$ be the local invariant vector at vertex $i$, that is 
\begin{equation}
z(i)=[I_1(g_1)^{\pi_{1i}},\ldots,I_1(g_r)^{\pi_{1i}}].
\end{equation}
For each $z(i)$ we associate a $E_1^T$-\emph{transform pair}
$\hat{z}(i)=(E_1^T)^{-1} z(i)$. The parameters $\hat{z}(i)$ describe the
maximal local graph in the neighbourhood of the vertex $i$ and
$\hat{z}(i)\in\{0,e_1,\ldots,e_r\}$, where $e_i$ are the elementary unit
vectors describing the type of the neighbourhood. 

\begin{proposition}
Let $z$ be the global invariant vector $z=[I(g_1),\ldots,I(g_r)]$ and the local
$G$-poset of graphs $\E_1=\{g_1(1),\ldots,g_r(1)\}$. If $h$ is
$\E_1$-restricted graph, then
\begin{equation}
\sum_i \hat{z}(i)=(E_1^{-1})^TD_1 z,
\end{equation}
where $D_1=diag(|Orb_{Stab(g_1)}(1)|,\ldots,|Orb_{Stab(g_r)}(1)|)$ and $E_1$
is the $E$-transform of the local $G$-poset $\E_1$.
\end{proposition}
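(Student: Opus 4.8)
The plan is to combine the two parts of Lemma~\ref{palautus} with Proposition~\ref{gpro}'s orthogonal-parameter machinery applied to the local $G$-poset $\E_1$. First I would recall that, by definition of $\E_1$-restricted, each vertex $i$ of $h$ has a unique maximal local graph in its neighbourhood, so the local vector $z(i)=[I_1(g_1)^{\pi_{1i}}(h),\ldots,I_1(g_r)^{\pi_{1i}}(h)]$ is a graphic vector inside $\E_1$: it equals the row of the $E_1$-transform corresponding to that maximal local graph. Hence $\hat z(i)=(E_1^T)^{-1}z(i)$ is exactly the unit vector $e_j$ (or $0$ if the neighbourhood is trivial), as asserted in the paragraph preceding the proposition. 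The content to prove is therefore the identity $\sum_i \hat z(i) = (E_1^{-1})^T D_1 z$, i.e. that summing these unit vectors over all vertices reproduces the scaled global invariants.

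The key computation is to sum $z(i)$ over all vertices $i$ and identify the result. For a fixed local graph $g_k(1)$, the sum $\sum_i I_1(g_k)^{\pi_{1i}}(h)$ is precisely the left-hand side of Lemma~\ref{palautus}(i) with $S=\{1\}$ and $|S|=1$ (so $V_{|S|}=V$ and $T$ ranges over all single vertices): it equals $|Orb_{Stab(g_k)}(1)|\, I(g_k)(h)$. In vector form this says $\sum_i z(i) = D_1 z$, where $D_1=\mathrm{diag}(|Orb_{Stab(g_1)}(1)|,\ldots,|Orb_{Stab(g_r)}(1)|)$. Applying the linear map $(E_1^T)^{-1}=(E_1^{-1})^T$ to both sides and using $\hat z(i)=(E_1^{-1})^T z(i)$ then gives $\sum_i \hat z(i) = (E_1^{-1})^T D_1 z$, which is the claim. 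Throughout one must check that the orbit-counting constant in Lemma~\ref{palautus}(i) is applied componentwise with the correct $g_k$, which is why $D_1$ is diagonal rather than scalar.

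I would write the proof in essentially that order: (1) invoke the $\E_1$-restricted hypothesis and the remark before the proposition to conclude $z(i)$ is graphic in $\E_1$ and $\hat z(i)\in\{0,e_1,\ldots,e_r\}$; (2) apply Lemma~\ref{palautus}(i) componentwise with $S=\{1\}$ to get $\sum_i z(i)=D_1 z$; (3) left-multiply by $(E_1^{-1})^T$ and substitute the definition of $\hat z(i)$. The main obstacle — really a bookkeeping subtlety rather than a deep difficulty — is making sure the $\E_1$-restricted condition is genuinely used: without condition~(ii) of Definition~\ref{def:res}, the vector $z(i)$ need not be a single row of $E_1$ (two incomparable local graphs could both be present with multiplicity one at $v$), and then $\hat z(i)$ would fail to be a unit vector, breaking the interpretation even though the raw identity $\sum_i z(i)=D_1 z$ from Lemma~\ref{palautus}(i) would still hold. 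So the delicate point is to state clearly that the hypothesis is what forces the per-vertex pieces to be the expected unit vectors, while the summation identity itself is a direct application of the restoration lemma.
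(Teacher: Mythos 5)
Your proof is correct, and it reaches the same pivotal identity as the paper, namely $E_1^T\bigl(\sum_i \hat z(i)\bigr)=D_1 z$, but by a genuinely cleaner route. The paper does not invoke Lemma~\ref{palautus} here at all: it orders $g_1,\ldots,g_r$ by subgraph containment, interprets $y_k$ as the number of vertices whose maximal neighbourhood type is exactly $g_k(1)$, and derives the triangular recursion $y_k=|Orb_{Stab(g_k)}(1)|z_k-\sum_{i>k}e^1_{ik}y_i$ by subtracting, from the rooted count of $g_k$, the copies sitting inside larger neighbourhood types; this recursion is then recognized as $D_1z=E_1^Ty$ and solved. That argument leans on the $\E_1$-restricted hypothesis from the start, since "vertex of type $g_k(1)$" must be well defined for the recursion to make sense. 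Your version instead gets $\sum_i z(i)=D_1 z$ componentwise as a direct instance of Lemma~\ref{palautus}(i) with $S=\{1\}$, and then just pushes the linear map $(E_1^T)^{-1}$ through the sum; this makes transparent that the displayed identity is a formal consequence of the restoration lemma valid for any graph, while the restricted hypothesis is only what forces each $\hat z(i)$ to be a unit vector (the interpretation the surrounding results, e.g.\ Lemma~\ref{lem:lakkk}, actually rely on). Your observation on that last point is exactly right and is, if anything, a sharper accounting of where the hypothesis is used than the paper's own proof provides.
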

\begin{proof}
Let $y=\sum_i \hat{z}(i)$. Let $g_1,g_2,\ldots g_r$ be in the increasing order
by the partial order of subgraph containment. For all
subgraphs $g_r$ in $h$, there are $|Orb_{Stab(g_r)}(1)|$ vertices of type
$g_r(1)$. Since all vertices have their neighborhoods in $\E_1$, we get
$y_r = |Orb_{Stab(g_r)}(1)| z_r$.

Next for each subgraph $g_{r-1}$ in $h$, there are
$|Orb_{Stab(g_{r-1})}(1)|-I_1(g_{r-1})(g_{r})y_r$ vertices
of type $g_{r-1}(1)$ since we have to subtract the vertices of type $g_r(1)$
multiplied by the number of local graphs $g_{r-1}(1)$ in $g_r(1)$.

In general
\begin{equation}
y_k = |Orb_{Stab(g_{k})}(1)|z_k - \sum_{i>k} e_{ik}^1 y_i, 
\end{equation}
where $e_{ij}^1$ is the $ij^{\mathrm{th}}$ element in $E_1$. This implies
$D_1z = E_1^Ty$ from which we solve $y = (E_1^{-1})^TD_1z$.
\end{proof}

\begin{lemma}\label{lem:lakkk}
Let $h$ be $\E_1$-restricted and $y = (E_1^{-1})^TD_1 z$. We can compute the number of connected vertices
in $h$ by
\begin{equation}
n=\sum_i y_i.
\end{equation}
\end{lemma}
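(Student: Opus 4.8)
The plan is to read off $n$ from the vector $y=(E_1^{-1})^TD_1z$ by summing its components, using the previous proposition together with the meaning of the transform pairs $\hat z(i)$. First I would recall from the preceding proposition that, since $h$ is $\E_1$-restricted, we have the identity $y=\sum_i \hat z(i)$, where the sum is over the \emph{connected} vertices $i$ of $h$ (the ones having at least one edge, since an isolated vertex has empty local neighbourhood and contributes $\hat z(i)=0$; alternatively one checks directly that a vertex contributing a nonzero unit vector must lie in some $g_k(1)$ with $g_k\neq\emptyset$). Each connected vertex $i$ contributes exactly one elementary unit vector $\hat z(i)=e_{j(i)}\in\{e_1,\ldots,e_r\}$ describing the isomorphism type of its maximal local neighbourhood graph in $\E_1$; this is precisely the content of the $E_1^T$-transform pair being a unit vector, guaranteed for $\E_1$-restricted $h$ by Definition~\ref{def:res}.

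Given that, the key computation is immediate: summing all components of $y$ gives
\begin{equation}
\sum_i y_i = \sum_i \Bigl(\sum_{\text{connected }v} \hat z(v)\Bigr)_i = \sum_{\text{connected }v} \sum_i (\hat z(v))_i = \sum_{\text{connected }v} 1 = n,
\end{equation}
because each $\hat z(v)$ is a single elementary unit vector whose entries sum to $1$. So the proof amounts to: (1) invoke the previous proposition to get $y=(E_1^{-1})^TD_1z=\sum_v \hat z(v)$; (2) note $\hat z(v)\in\{0,e_1,\ldots,e_r\}$ with the zero vector occurring exactly at the unconnected vertices, so the nonzero terms are in bijection with connected vertices; (3) observe $\mathbf{1}^T e_j = 1$ for every $j$; (4) conclude $\mathbf{1}^T y = $ number of connected vertices $=n$.

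The only real subtlety — and the step I would be most careful about — is making sure the bookkeeping of which vertices contribute is consistent with the definition of $n$ as ``the number of connected vertices in $h$.'' One must confirm that every connected vertex genuinely sits inside exactly one maximal local graph $h\in\E_1$ with $g\neq\emptyset$ (so it contributes a unit vector, not $0$), which is exactly clause~(i) of $\E_1$-restrictedness, and that an isolated vertex has $\hat z(v)=0$ (its only local invariant is that of the empty graph, handled by the $g_i(S)\setminus_e S\neq\emptyset$ convention attached to the $G$-poset $\E_1$). Everything else is the trivial linear-algebra fact that the all-ones functional sends every standard basis vector to $1$, so no genuine obstacle remains once the preceding proposition is in hand.
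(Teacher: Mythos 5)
Your proof is correct and follows the same route as the paper's (one-line) argument: the paper simply observes that each nonzero $\hat z(i)=e_{k_i}$ corresponds to a connected vertex with neighbourhood of type $e_{k_i}$, which is exactly your steps (1)--(4) spelled out in more detail. Your extra care about isolated vertices contributing $\hat z(v)=0$ is a reasonable elaboration of what the paper leaves implicit.
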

\begin{proof}
Each $\hat{z}(i) =e_{k_i} \neq 0$ corresponds to a vertex with the neighborhood specified by $e_{k_i}$.
\end{proof}

Notice it does not matter which connected vertices in $g_i$ we choose to be
the fixed point $1$. As long as the hypothesis that every vertex in $h$
belongs to one of the types defined by $\E_1=\{g_1(1),\ldots,g_r(1)\}$ holds, the result applies.

We continue the splitting of $S_n$ further.

For simplicity we select $\E_1$ so that all local graphs connected to the
vertex $1$ up to the distance $r$ are in $\E_1$. By distance we mean the
length of the shortest path between vertices. We also include the connections
between all vertices up to distance $r$. To make $\E_1$ finite we may
restrict the vertex degree to $d$. This restricts also the set of graphs that 
can be described by these local parameters.

Let $\E$ the infinite $G$-poset of graphs with vertex degree at most $d$ and 
denote by $U_r(i,g)$ the local subgraph of $g$ at the fixed point $i$ up to
the distance $r$ from $i$. We use also notation $U_r(j,\hat{z}(i)), j \neq i$
to denote the local graph in $\E_{1,2}$ such that the distance of any vertex
to the fixed point $j$ is limited to $r$ in the local graph defined by $\hat{z}(i)$.

Choose
\begin{equation}
\E_1=\{U_r(1,g^\pi) | \pi \in S_n, g \in \E \}
\end{equation}
which is in other words all the neighborhoods up to distance $r$ appearing in
$\E$ with the fixed point $1$. We have added the permutation $\pi \in S_n$ to
make sure all the vertex types will be included.

We continue the process by choosing
\begin{equation}\label{eq:process}
\E_{1,2,\ldots,k} = \{ U_r(1,g^\pi) \cap U_r(2,g^\pi) \cap \cdots \cap
U_r(k,g^\pi) | \pi \in S_n,g \in \E \},
\end{equation}
where $1,\ldots,k$ are all fixed points.

\begin{example}\label{ex:lok}
In Figure \ref{fig:lok1} we have limited to $U_1$-neighborhoods of trivalent
graphs. In Figures \ref{fig:lok12} and \ref{fig:lok123} we have followed the
construction given above. For $\E_{1,2}$ and $\E_{1,2,3}$ we have only drawn
one copy of all labellings of the fixed points.

\begin{figure}[!htk]
\begin{center}
\includegraphics[width=8cm,height=1.5cm]{./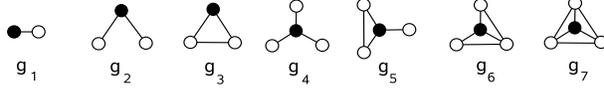}
\end{center}
\vspace{-0.5cm}
\caption{Local $G$-poset $\E_1$.}\label{fig:lok1}
\end{figure}

\begin{figure}[!htk]
\begin{center}
\includegraphics[width=4.3cm,height=1.7cm]{./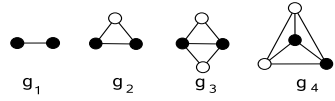}
\end{center}
\vspace{-0.5cm}
\caption{Local $G$-poset $\E_{1,2}$.}\label{fig:lok12}
\end{figure}

\begin{figure}[!htk]
\begin{center}
\includegraphics[width=1.9cm,height=1.2cm]{./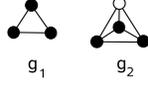}
\end{center}
\vspace{-0.5cm}
\caption{Local $G$-poset $\E_{1,2,3}$.}\label{fig:lok123}
\end{figure}
\end{example}

Let $P_{1,\ldots,i}$ be a projector from $\E_{1,\ldots,i-1}$ to
$\E_{1,\ldots,i}$ in the process (\ref{eq:process}). More precisely, let $r_{i-1}=|\E_{1,\ldots,i-1}|$ and
$r_i=|\E_{1,\ldots,i}|$. Then $P_{1,\ldots,i}$ maps from $\mathbb{Z}^{r_{i-1}}
\rightarrow \mathbb{Z}^{r_i}$.

\begin{proposition}\label{pro:hukl}
If $h(k_1,\ldots,k_{i-1})$ is
$\E_{1,\ldots,i}=\{\emptyset,g_1(1,\ldots,i),\ldots,g_r(1,\ldots,i)\}$-restricted
and $z(k_1,\ldots,k_{i-1})$ is the local graph invariant vector at $k_1,\ldots,k_{i-1}$ given by $z_t=I_{k_1,\ldots,k_{i-1}}(g_t(k_1,\ldots,k_{i-1}))(h(k_1,\ldots,k_{i-1}))$, then
\begin{equation}\label{eq:ekl}
\sum_{j \neq k_1,\ldots,k_{i-1}} \hat{z}(k_1,\ldots,k_{i-1},j)=(E_{1,\ldots,i}^{-1})^TD_{1,\ldots,i}P_{1,\ldots,i} z(k_1,\ldots,k_{i-1}),
\end{equation}
where $D_{1,\ldots,i}=diag(|Orb_{Stab(g_1(1,\ldots,i-1))}(i)|,\ldots,|Orb_{Stab(g_r(1,\ldots,i-1))}(i)|)$,
$E_{1,\ldots,i}$ is the $E$-transform of the local $G$-poset
$\E_{1,\ldots,i}$ and $P_{1,\ldots,i}$ is the projector defined above. 
\end{proposition}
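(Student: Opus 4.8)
The plan is to mimic, \emph{mutatis mutandis}, the triangular-subtraction argument of the proposition immediately preceding Lemma~\ref{lem:lakkk}, now carried out relative to the permutation group $\mathrm{Stab}_{S_n}(k_1,\ldots,k_{i-1})$ in place of $S_n$. The first step is the observation that every point-wise stabilizer is again a permutation group, so the local invariants $I_{k_1,\ldots,k_{i-1}}(\cdot)$ play the role of the ``global'' invariants for this group, the partially labelled graph $h(k_1,\ldots,k_{i-1})$ plays the role of the ambient graph, and adjoining one further fixed point $j$ is exactly the passage to $\mathrm{Stab}_{S_n}(k_1,\ldots,k_{i-1},j)$. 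Under this dictionary the statement becomes an instance of the earlier ``$\sum_i \hat z(i)=(E_1^{-1})^TD_1 z$'' identity, with $E_1$ replaced by $E_{1,\ldots,i}$ and $D_1$ by $D_{1,\ldots,i}$; the only genuinely new object is the projector $P_{1,\ldots,i}$.

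Second, I would pin down $P_{1,\ldots,i}$ from the construction~(\ref{eq:process}): each graph $g_s(1,\ldots,i)\in\E_{1,\ldots,i}$ is by definition an intersection $U_r(1,\cdot)\cap\cdots\cap U_r(i,\cdot)$, hence a canonically determined local sub-structure of some $g_t(1,\ldots,i-1)\in\E_{1,\ldots,i-1}$ once the vertex to be promoted to the new fixed point $i$ is named. Thus $P_{1,\ldots,i}$ is the matrix that re-expresses the $(i-1)$-local count vector $z(k_1,\ldots,k_{i-1})$ as a vector indexed by $\E_{1,\ldots,i}$, its entries recording how many choices of the new fixed point inside $g_t(1,\ldots,i-1)$ yield the $i$-local type $g_s(1,\ldots,i)$. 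I would then check that this bookkeeping is compatible with the orbit sizes appearing in $D_{1,\ldots,i}$: $|Orb_{\mathrm{Stab}(g_t(1,\ldots,i-1))}(i)|$ is precisely the number of vertices of $g_t(1,\ldots,i-1)$ that, taken as fixed point $i$, realise the \emph{maximal} $i$-local graph sitting inside it.

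Third, with $P_{1,\ldots,i}$ in hand the induction is routine. Order $g_1(1,\ldots,i),\ldots,g_r(1,\ldots,i)$ so that $g_a\subseteq g_b$ forces $a\le b$, and set $y=\sum_{j\neq k_1,\ldots,k_{i-1}}\hat z(k_1,\ldots,k_{i-1},j)$. For the maximal graph $g_r$ the $\E_{1,\ldots,i}$-restrictedness of $h(k_1,\ldots,k_{i-1})$ guarantees that the locations $j$ whose $i$-local neighbourhood is of type $g_r(1,\ldots,i)$ are counted with the correct multiplicity, giving $y_r=|Orb_{\mathrm{Stab}(g_r(1,\ldots,i-1))}(i)|\,(P_{1,\ldots,i}z(k_1,\ldots,k_{i-1}))_r$; descending, a location of $i$-local type $g_s$ is over-counted inside $y_t$ for $t>s$ exactly $e^{1,\ldots,i}_{ts}$ times, so
\begin{equation}
y_s = |Orb_{\mathrm{Stab}(g_s(1,\ldots,i-1))}(i)|\,(P_{1,\ldots,i}z(k_1,\ldots,k_{i-1}))_s - \sum_{t>s} e^{1,\ldots,i}_{ts}\,y_t .
\end{equation}
This is the componentwise form of $E_{1,\ldots,i}^T y = D_{1,\ldots,i}P_{1,\ldots,i}z(k_1,\ldots,k_{i-1})$, and solving for $y$ yields~(\ref{eq:ekl}). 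The step I expect to cost the most effort is the second one: giving an honest definition of $P_{1,\ldots,i}$ and verifying that the restrictedness hypothesis on $h(k_1,\ldots,k_{i-1})$ together with the nested construction~(\ref{eq:process}) really forces every admissible new fixed point $j$ to have its $i$-local neighbourhood in $\E_{1,\ldots,i}$, with the multiplicities matching $D_{1,\ldots,i}P_{1,\ldots,i}$. Once that is granted, the remainder is the same triangular computation as in the single-fixed-point case.
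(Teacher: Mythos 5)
Your proposal is correct and follows essentially the same route as the paper, whose proof is simply ``same proof as above'' (the single-fixed-point proposition) with the orbit count $|Orb_{Stab(g_k(1,\ldots,i-1))}(i)|$ replacing $|Orb_{Stab(g_k)}(1)|$; your descending recursion $y_s = |Orb_{\mathrm{Stab}(g_s(1,\ldots,i-1))}(i)|\,(P_{1,\ldots,i}z)_s - \sum_{t>s} e^{1,\ldots,i}_{ts}\,y_t$ is exactly the triangular computation the paper is invoking. Your additional care in pinning down the projector $P_{1,\ldots,i}$ is a useful elaboration of a point the paper leaves implicit, but it does not change the argument.
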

\begin{proof}
Same proof as above except now there are $|Orb_{Stab(g_k(1,\ldots,i-1))}(i)|$
sequences of type $g_k(1,\ldots,i)$ for each occurrence of $z_k(1,\ldots,i-1)$ subgraph not contained in larger subgraphs.
\end{proof}

\begin{example}
We choose the local $G$-posets $\E_1$ and $\E_{1,2}$ in Example
\ref{ex:lok}. The $E$-transform of $\E_1$ is 
\begin{displaymath}
E_1=\left[ \begin{array}{lllllll}
1 & 0 & 0 & 0 & 0 & 0 & 0\\
2 & 1 & 0 & 0 & 0 & 0 & 0\\
2 & 1 & 1 & 0 & 0 & 0 & 0\\
3 & 3 & 0 & 1 & 0 & 0 & 0\\
3 & 3 & 1 & 1 & 1 & 0 & 0\\
3 & 3 & 2 & 1 & 2 & 1 & 0\\
3 & 3 & 3 & 1 & 3 & 3 & 1 
\end{array} \right].
\end{displaymath}
The inverse is 
\begin{displaymath}
E_1^{-1}=\left[ \begin{array}{lllllll}
1 & 0 & 0 & 0 & 0 & 0 & 0\\
-2 & 1 & 0 & 0 & 0 & 0 & 0\\
0 & -1 & 1 & 0 & 0 & 0 & 0\\
3 & -3 & 0 & 1 & 0 & 0 & 0\\
0 & 1 & -1 & -1 & 1 & 0 & 0\\
0 & 0 & 0 & 1 & -2 & 1 & 0\\
0 & 0 & 0 & -1 & 3 & -3 & 1 
\end{array} \right].
\end{displaymath}
The $G$-poset $\E_1$ is not complete thus explaining why the inverse formula
in \cite{Tomi3} does not apply.
Let $z=[9,14,1,4,2,0,0]^T$. We have $D_1=\mathrm{diag}([2,1,3,1,1,2,4])$ and
\begin{equation}
\sum_{i} \hat{z}(i)=(E_1^{-1})^TD_1z = [2,1,1,2,2,0,0]^T.
\end{equation}
By choosing arbitrarily $\hat{z}(1)=e_1$,$\hat{z}(2)=e_1$,
$\hat{z}(3)=e_2$, $\hat{z}(4)=e_3$, $\hat{z}(5)=e_4$, $\hat{z}(6)=e_4$,
$\hat{z}(7)=e_5$, $\hat{z}(8)=e_5$ we can solve the $E_1^T$-transform pairs 
$z(1)=E_1^T\hat{z}(1)=e_1$,$z(2)=e_1$, $z(3)=2e_1+e_2$, $z(4)=2e_1+e_2+e_3$,
$z(5)=3e_1+3e_2+e_4$, $z(6)=3e_1+3e_2+e_4$,$z(7)=3e_1+3e_2+e_3+e_4+e_5$,
$z(8)=3e_1+3e_2+e_3+e_4+e_5$. These in turn allow us to compute the local
neighborhoods with two fixed points. For instance
\begin{equation}
\sum_{j\neq 8} \hat{z}(8,j)=(E_{1,2}^{-1})^TD_{1,2}P_{1,2}z(8) = e_1+2e_2,
\end{equation}
where
\begin{displaymath}
E_{1,2}=\left[ \begin{array}{lll}
1 & 0 & 0\\
1 & 1 & 0\\
1 & 2 & 1\end{array} \right],
\end{displaymath}
$D_{1,2}=\mathrm{diag}([1,2,1])$ and
\begin{displaymath}
P_{1,2}=\left[ \begin{array}{lllllll}
1 & 0 & 0 & 0 & 0 & 0 & 0\\
0 & 0 & 1 & 0 & 0 & 0 & 0\\
0 & 0 & 0 & 0 & 0 & 1 & 0
\end{array} \right].
\end{displaymath}
\end{example}

\begin{example}\label{ex:lakko}
Let $g_1(1,2)=a_{12} \in \E_{1,2}$ and $g_1(1,2,3)=a_{12} \in
\E_{1,2,3}$. Then the equation (\ref{eq:ekl}) in the case $i=3$ multiplied by $E_{1,2,3}^T$ on
both sides states for instance that
\begin{equation}
\sum_{k \neq i,j} z_1(i,j,k) = |Orb_{Stab(g_1(1,2))}(3)|z_1(i,j) = (n-2)z_1(i,j).
\end{equation}
Notice we need to know what is the number of vertices $n$ in order to
calculate $D_{1,2}$ or $D_{1,2,3}$. We get $n$ by Lemma \ref{lem:lakkk}.
\end{example}

\subsection{Conjugate Equations}
In $\E_{1,\ldots,i}$ we say two elements $g(1,\ldots,i),h(1,\ldots,i)$ are
$\pi$-\emph{conjugates} iff they are isomorphic with respect to $Stab(1,\ldots,i)$
after permutation $\pi \in S_{\{1,\ldots,i\}}$:
\begin{equation}
g(1,\ldots,i)^{\pi} \cong_{Stab(1,\ldots,i)} h(1,\ldots,i)
\end{equation}
in this order.

For instance $a_{13}$ is $(12)$-conjugate to $a_{23}$ and $a_{24}$ in $\E_{1,2}$.

With invariant vectors conjugation is easy. For instance, let 
\begin{equation}
\E_{1,2}=\{g_1(12)=a_{13},g_2(12)=a_{23},g_3(12)=a_{13}a_{12},g_4(12)=a_{12}a_{23}\}.
\end{equation}

Then $z(1,2)=[z_1(1,2),z_2(1,2),z_3(1,2),z_4(1,2)]$ is $(12)$-conjugate to
$x(1,2)=[z_2(1,2),z_1(1,2),z_4(1,2),z_3(1,2)]$. We denote this by $z(1,2)^\pi = x(1,2)$.

Let $\hat{z}(i_1,i_2,\ldots,i_k) \in \{0,e_1,\ldots,e_{r_k}\}$ denote the local graph types with $k$ fixed points
with their $E_{1,\ldots,k}^T$-transform pairs
$z(i_1,i_2,\ldots,i_k)=E_{1,\ldots,k}^T\hat{z}(i_1,i_2,\ldots,i_k)$ such that 
\begin{equation}\label{eq:km1k}
\sum_{i_k=1,i_k \neq i_1,\ldots,i_{k-1}}^n \hat{z}(i_1,i_2,\ldots,i_k)=(E_{1,\ldots,k}^{-1})^TD_{1,\ldots,k}P_{1,\ldots,k}z(i_1,\ldots,i_{k-1}).
\end{equation} 

These equations imply an algorithm for reconstructing the graph $g$ from
$z$ where $z=[I(g_1)(g),\ldots,I(g_{r_1})(g)]$. First calculate
$x=(E_1^{-1})^TD_1z$ and then assign in arbitrary order 
the vertices $\hat{z}(1),\ldots,\hat{z}(n)$ such that $\sum_i
\hat{z}(i)=y$. Notice $n=\sum_i x_i$. Next calculate the parameters $z(i)=E_1^T\hat{z}(i)$ and solve
the systems 
\begin{equation}
\sum_{j \neq i} \hat{z}(i,j)=(E_{1,2}^{-1})^TD_{1,2}P_{1,2}z(i)
\end{equation}
and
\begin{equation}
z(i,j)^{(ij)} = z(j,i),
\end{equation}
where $z(i,j)=E_{1,2}^T \hat{z}(i,j)$. 

If all the conditions of the following theorem are satisfied by the parameters
$z(i)$, $i=1,2,\ldots,n$, we obtain the adjacency matrix $G$ of the graph $g$
via $z_1(i,j)$ assuming the first graph in $\E_{1,2}$ is $a_{12}$.

It is not sufficient that each pair of vertices satisfy the conjugation
equations.

Let us say that the sequence $\E_1,\E_{1,2},\ldots,\E_{1,2,\ldots,s}$ is \emph{reconstructible} if each $g_i(1,2,\ldots,k) \in \E_{1,2,\ldots,k}$ is uniquely determined by the graphs $U_r(g_i(1,2,\ldots,k),j), j \neq 1,2,\ldots,k$ in $\E_{1,2,\ldots,k+1}$. The sequence of $G$-posets $\E_1,\E_{1,2},\ldots,\E_{1,2,3,4}$ in the example \ref{ex:lok} is reconstructible.

Let $s$ smallest integer such that $\E_{1,2,\ldots,s+1}= \emptyset$ in the process
(\ref{eq:process}). In Example \ref{ex:lok} for instance $s=4$, since $\E_{1,\ldots,4}=\{a_{12}a_{13}a_{14}a_{23}a_{24}a_{34}\}$ and $\E_{1,\ldots,5}=\emptyset$. 
\begin{theorem}\label{the:graafisuus1}
Let the sequence $\E_1,\E_{1,2},\ldots,\E_{1,2,\ldots,s}$ be reconstructible. The local parameters $\hat{z}(i) \in \{0,e_1,\ldots,e_{r_1}\}$, $i=1,\ldots,n$ are graphic iff the local parameters with $k=2,3,\ldots,s$ fixed points obtained via
  (\ref{eq:ekl}) together with the process (\ref{eq:process}), satisfy the conjugate equations 
\begin{equation}\label{eq:konjugaa}
z(i_1,\ldots,i_k)^{\pi}=z(i_{\pi(1)},\ldots,i_{\pi(k)})
\end{equation}
for all permutations in $\mathfrak{S}_k$. 
\end{theorem}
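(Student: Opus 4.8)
The plan is to prove the two implications separately, the forward (``only if'') direction being essentially definitional and the backward (``if'') direction requiring a downward induction on the number of fixed points. For the ``only if'' direction, suppose the $\hat z(i)$ are graphic, realized by a graph $g$ on $n$ vertices with $U_r(i,g)$ of type $\hat z(i)$ for every vertex $i$. Applying Proposition \ref{pro:hukl} repeatedly along the process (\ref{eq:process}), the vectors $z(i_1,\ldots,i_k)$ produced by (\ref{eq:ekl}) are exactly the local invariant vectors of $g$ at the fixed points $i_1,\ldots,i_k$, i.e.\ they encode $U_r(i_1,g)\cap\cdots\cap U_r(i_k,g)$ with those vertices distinguished. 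Reordering the distinguished vertices by $\pi\in\mathfrak{S}_k$ yields the same subgraph of $g$ with the fixed points merely relabelled, so $z(i_1,\ldots,i_k)^\pi=z(i_{\pi(1)},\ldots,i_{\pi(k)})$, which is the conjugate equation (\ref{eq:konjugaa}).

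For the ``if'' direction, assume the conjugate equations hold at every level $k=2,\ldots,s$. I would first build a graph $g$ on the vertex set $\{1,\ldots,n\}$ by placing an edge between $i$ and $j$ precisely when $z_1(i,j)\neq 0$; the conjugate equation at $k=2$, $z(i,j)^{(12)}=z(j,i)$, forces $z_1(i,j)=z_1(j,i)$, so the adjacency is symmetric and $g$ is well defined. It then remains to verify that $g$ realizes the prescribed neighbourhoods, and for this I would prove the stronger statement: for every $k\le s$ and every tuple of distinct vertices $(i_1,\ldots,i_k)$, the actual subgraph $U_r(i_1,g)\cap\cdots\cap U_r(i_k,g)$, with $i_1,\ldots,i_k$ distinguished, is isomorphic as a partially labelled graph to the element of $\E_{1,\ldots,k}$ described by the parameter vector $z(i_1,\ldots,i_k)$ coming from (\ref{eq:ekl}).

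The induction runs downward in $k$, starting at $k=s$, where by the choice of $s$ the poset $\E_{1,\ldots,s+1}$ is empty: the local structure cannot be extended, so $U_r(i_1,g)\cap\cdots\cap U_r(i_s,g)$ is just the subgraph induced on $\{i_1,\ldots,i_s\}$, determined by the values $z_1(i_a,i_b)$, and these agree with the internal edges recorded in $z(i_1,\ldots,i_s)$ by the conjugate equations; this settles the base case. For the inductive step, assume the claim at level $k+1$, fix distinct $i_1,\ldots,i_k$, and set $W=U_r(i_1,g)\cap\cdots\cap U_r(i_k,g)$. For each external vertex $j$ the slice $W\cap U_r(j,g)=U_r(i_1,g)\cap\cdots\cap U_r(i_k,g)\cap U_r(j,g)$ has, by the inductive hypothesis, the type recorded by $z(i_1,\ldots,i_k,j)$. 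On the other hand, applying Proposition \ref{pro:hukl} to the abstract local graph $h^{*}$ described by $\hat z(i_1,\ldots,i_k)$ — treating $h^{*}$ as a graph in its own right and its $j$-slices as in (\ref{eq:process}) — together with the conjugate equations at level $k+1$, shows that the family of level-$(k+1)$ restrictions $U_r(h^{*},j)$ is exactly the family recorded by the vectors $z(i_1,\ldots,i_k,j)$. Hence $W$ and $h^{*}$ have, up to relabelling of the external vertex, the same family of level-$(k+1)$ neighbourhoods; since the sequence is reconstructible, an element of $\E_{1,\ldots,k}$ is uniquely determined by this family, so $W\cong h^{*}$ as partially labelled graphs. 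Taking $k=1$ then gives $U_r(i,g)$ of type $\hat z(i)$ for every $i$, so $g$ realizes the parameters and they are graphic.

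The main obstacle will be the bookkeeping in the inductive step: matching the abstract local $G$-poset elements produced by the linear-algebraic recursion (\ref{eq:ekl}) with the concrete subgraphs $U_r(\cdots;g)$, and checking that the conjugate equations supply exactly the consistency needed so that the multiset of level-$(k+1)$ slices of $W$ agrees with that of $h^{*}$ slice by slice, not merely in aggregate count. This is where the $\E_{1,\ldots,i}$-restrictedness hypothesis of Definition \ref{def:res} — each connected vertex possessing a unique maximal neighbourhood type — must be invoked, in order to pass from the summed identity (\ref{eq:ekl}) to the individual comparison; one must also check along the way that (\ref{eq:ekl}) genuinely outputs valid unit-vector types $\hat z(i_1,\ldots,i_k,j)\in\{0,e_1,\ldots,e_{r_{k+1}}\}$ rather than arbitrary integer vectors, which is precisely what guarantees that the reconstruction algorithm preceding the theorem does not break down.
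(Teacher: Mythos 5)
Your proposal is correct and follows essentially the same route as the paper's own proof: both anchor the argument at level $s$, where the local graphs consist only of fixed points so that graphicness reduces to edge consistency enforced by the conjugate equations, and then propagate back to fewer fixed points using reconstructibility of the sequence $\E_1,\ldots,\E_{1,\ldots,s}$. The only cosmetic difference is that you build the candidate graph $g$ from the $z_1(i,j)$ up front and phrase the induction as verifying $U_r(i_1,g)\cap\cdots\cap U_r(i_k,g)$ against the abstract types, whereas the paper argues level by level that the parameter systems are graphic.
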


\begin{proof}
Notice first that the parameters $z(1),\ldots,z(n)$ are graphic if the parameters $z(i_1,i_2)$ as a solution to the conjugate equations and the equations (\ref{eq:ekl}), are graphic. This follows from the reconstruction; $z(i_1)$ is fully determined by $z(i_1,i_2)$, $i_2 \neq i_1$ and if the parameters $z(i_1,i_2)$ describe an actual graph it confirms that the neighbourhoods of $z(1),\ldots,z(n)$ match against each other. Reconstruction guarantees that there is only one way to compile $z(i_1)$ out of $z(i_1,i_2),i_2 \neq i_1$.

We begin by showing that the system described by $z(i_1,i_2,\ldots,i_s)$ as a solution to conjugate equations and (\ref{eq:ekl}) is graphic. In $\E_{1,\ldots,s}$ the graphs contain only fixed points. (Suppose on the contrary that there is one vertex $v$ in the graph $g(1,\ldots,s) \in \E_{1,\ldots,s}$ which is not a fixed point. Then one can form intersection $U_r(g(1,\ldots,s),v)$ which is non-empty and therefore $\E_{1,\ldots,s+1}\neq \emptyset$ which is contradiction.)

To show that the system $z(i_1,\ldots,i_s)$ is graphic we have to show that whenever there is an edge $(i_j,i_k)$ in $z(i_1,\ldots,i_s)$, the same edge appears also in all the other $z(j_1,\ldots,j_s)$ neighbourhoods containing the end points $i_j,i_k \in \{j_1,\ldots,j_s\}$. This can be seen by finding suitable permutation $\pi$ s.t. $z(i_j,i_k,i_1,i_2,\ldots,i_s)=z(i_1,\ldots,i_s)^\pi$ and then noticing that all the subgraph parameters of $z(i_j,i_k)$ obtained by
\begin{equation}
\sum_{i_1 \neq i_j,i_k} \hat{z}(i_j,i_k,i_1) = (E_{1,2,3}^{-1})^T D_{1,2,3} P_{1,2,3} z(i_j,i_k)
\end{equation}
and furthermore by
\begin{equation}
\sum_{i_t \neq i_j,i_k,i_1,\ldots,i_{t-1}} \hat{z}(i_j,i_k,i_1,\ldots,i_{t-1}) = (E_{1,\ldots,t+2}^{-1})^T D_{1,\ldots,t+2} P_{1,\ldots,t+2} z(i_j,i_k)
\end{equation}
contain the edge $(i_j,i_k)$ if $z(i_j,i_k)$ contains it. This follows from the construction of $E_{1,\ldots,k}$-trasform matrices. 

Now by finding similarly permutation $\phi$ s.t. $z(i_j,i_k,j_1,\ldots,j_s)=z(j_1,\ldots,j_s)^\phi$ we find that the edge $(i_j,i_k)$ in $z(j_1,\ldots,j_s)$ is consitent with respect to $z(i_j,i_k)$ and therefore with respect to $z(i_1,\ldots,i_s)$. Thus the parameters $z(i_1,\ldots,i_s)$ are graphic.

Since the sequence $\E_1,\E_{1,2},\ldots,\E_{1,\ldots,s}$ is reconstructible we can read from $z(i_1,\ldots,i_s)$ the parameters $z(i_1,\ldots,i_{s-1})$ with one exception: if the graph $z(i_1,\ldots,i_{s-1})$ contains only fixed points, then it will not be separated from the empty graph. But if $z(i_1,\ldots,i_{s-1})$ represents a graph containing only the fixed points, we can deduce as above that the system is graphic one the conjugate equations and (\ref{eq:ekl}) are satisfied.

We continue the same process to the levels $z(i_1,\ldots,i_{s-2})$, $z(i_1,\ldots,i_{s-3})$, $\ldots$ $z(i_1,i_2)$. Thus we have that the system $z(i_1,i_2),\ldots,z(i_1,\ldots,i_s)$ is graphic and by reconstruction it represents the graph corresponding to the parameters $z(1),\ldots,z(n)$. Thus the parameters $z(1),\ldots,z(n)$ are graphic.
\end{proof}

We find that if the parameters $z(i,j)$ satisfy the conjugate equations, some graph can be reconstructed from the first components of $z(i,j)$ corresponding to $a_{12}\in \E_{1,2}$. This, however, does not imply that $z(1),z(2),\ldots,z(n)$ are graphic.

\begin{example}\label{ex:arra}
Consider for instance the parameters $\hat{z}(i),\hat{z}(i,j)$ using the
$G$-posets from Example \ref{ex:lok} and given in the following array, where the diagonal elements correspond to $z_{ii}=\hat{z}(i)$.
\begin{displaymath}
\mathcal{Z}=\left[ \begin{array}{llll}
e_3 & e_2 & 0 & e_2\\
e_2 & e_3 & e_2 & 0\\
0 & e_2 & e_3 & e_2\\
e_2 & 0 & e_2 & e_3 \end{array} \right].
\end{displaymath}
These parameters, indeed, satisfy all pairwise conjugate equations (all
elements in $\E_{12}$ happen to be self-conjugates in Example
\ref{ex:lok}) but this is not a graphic system of parameters since
$\hat{z}(1)$ being of type $e_3$ (see Figure \ref{fig:lok1}) and connected to
$2$ and $4$ should imply that $z_1(2,4)=1$ i.e. the vertices $2$ and $4$
should be connected. 

This is why we need all the parameters $z(i_1,i_2,i_3)$. The
equations (\ref{eq:ekl}) state that
\begin{equation}
\hat{z}(1,2,3)+\hat{z}(1,2,4)=(E_{1,2,3}^{-1})^TD_{1,2,3}P_{1,2,3} z(1,2)=e_1,
\end{equation}
which implies that if $\hat{z}(1,2,3)=e_1$, then $z(1,3,2)=e_1$ by conjugate
equation and we have a contradiction since $z(1,3)=0$ and cannot yield the
'child' $z(1,3,2)=e_1$ by equations (\ref{eq:ekl}).

On the other hand if $\hat{z}(1,2,4)=e_1$, then $z(2,4,1)=e_1$ and this
contradicts with $z_1(2,4)=0$. Thus the parameters in $\mathcal{Z}$ do not 
satisfy all the constraints.
\end{example}

We gave the result so that it is easy to generalize it to hyper graphs. 
It is obvious that we don't have to continue any further since all the parameters
$z(i_1,\ldots,i_{s+1})$ are zero. In fact, when working with hypergraphs, it
may be advantageous to use some sparse representation to the $z$-parameters
and introduce deeper terms $z(i_1,\ldots,i_k)$ only for those 'parents'
$z(i_1,\ldots,i_{k-1})$ which are non-zero. 

The result above implies the following result for the global invariant vector $z$.
However, the invariants in the vector $z=[I(g_1),\ldots,I(g_{r_1})]$ are
dictated by the local $G$-poset $\E_1$.

\begin{corollary}\label{cor:lgkd}
The graph invariant vector $z=[I(g_1),\ldots,I(g_{r_1})]$ corresponds to an
$\E_1$-restricted graph iff there are parameters $\hat{z}(i)\in
\{0,e_1,\ldots,e_{r_1}\}$ such that
\begin{equation}
\sum_{i} \hat{z}(i)=(E_1^{-1})^TD_1z
\end{equation}
and they satisfy the conditions of Theorem \ref{the:graafisuus1}.
\end{corollary}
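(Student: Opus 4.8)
The plan is to derive Corollary \ref{cor:lgkd} as essentially a translation of Theorem \ref{the:graafisuus1} from the language of local parameters $\hat{z}(i)$ to the language of the global invariant vector $z=[I(g_1),\ldots,I(g_{r_1})]$. The crucial observation is that for an $\E_1$-restricted graph the global vector $z$ and the multiset of local types $\{\hat{z}(i)\}_{i=1}^n$ carry exactly the same information, the bridge being the identity $\sum_i \hat{z}(i)=(E_1^{-1})^T D_1 z$ already proved (the proposition preceding Lemma \ref{lem:lakkk}). First I would prove the forward direction: if $z$ is realized by an $\E_1$-restricted graph $h$, then every vertex $v$ of $h$ has a well-defined neighbourhood type $\hat{z}(v)\in\{0,e_1,\ldots,e_{r_1}\}$ by part (i) of Definition \ref{def:res}, these satisfy $\sum_i \hat{z}(i)=(E_1^{-1})^T D_1 z$ by the cited proposition, and since $h$ is an actual graph the derived higher-order parameters $\hat{z}(i_1,\ldots,i_k)$ obtained through the process (\ref{eq:process}) and (\ref{eq:ekl}) genuinely come from $h$, so the conjugate equations (\ref{eq:konjugaa}) hold trivially because isomorphism classes of actual subgraphs are permutation-invariant.

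For the converse I would argue: suppose parameters $\hat{z}(i)\in\{0,e_1,\ldots,e_{r_1}\}$ exist with $\sum_i \hat{z}(i)=(E_1^{-1})^T D_1 z$ and satisfying the hypotheses of Theorem \ref{the:graafisuus1}. Then Theorem \ref{the:graafisuus1} directly yields that the $\hat{z}(i)$ are graphic, i.e.\ there is a graph $g$ whose vertices have precisely these neighbourhood types. It remains to check that this $g$ realizes the prescribed global vector $z$, and this is where I would invoke the proposition preceding Lemma \ref{lem:lakkk} in reverse: from the neighbourhood types of $g$ we recover $\sum_i \hat{z}(i)=(E_1^{-1})^T D_1 z(g)$ where $z(g)=[I(g_1)(g),\ldots,I(g_{r_1})(g)]$; since $D_1$ and $(E_1^{-1})^T$ are invertible and we assumed $\sum_i \hat{z}(i)=(E_1^{-1})^T D_1 z$, we conclude $z(g)=z$. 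Finally $g$ is $\E_1$-restricted by construction since every vertex's neighbourhood lies in $\E_1$ and part (ii) of Definition \ref{def:res} is automatic once the maximal neighbourhood is fixed.

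The main obstacle I expect is the bookkeeping around the number of vertices $n$: the matrix $D_1$ in the identity depends on $n$, which is itself recovered from the data via $n=\sum_i y_i$ (Lemma \ref{lem:lakkk}), so one must be careful that the $n$ used to build $D_1$ is consistent with the $n$ that the realized graph $g$ actually has. This is not deep but needs to be stated cleanly, perhaps by noting that $\sum_i y_i$ is forced by $z$ through the first few rows of $(E_1^{-1})^T D_1 z$ in a way that is independent of the guessed $n$ appearing in the later diagonal entries, or simply by treating $n$ as given alongside $z$. A secondary point requiring a word of care is the presence of the zero vector among allowed $\hat{z}(i)$: vertices of type $0$ are the isolated vertices, which contribute nothing to any connected invariant, so they can be added or removed freely; I would remark that the statement is about connected vertices exactly as in the discussion following Lemma \ref{lem:lakkk}, so this causes no ambiguity. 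With those caveats handled, the corollary follows by combining Theorem \ref{the:graafisuus1} with the reconstruction proposition in both directions.
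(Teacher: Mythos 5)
Your argument is correct and is exactly the route the paper intends: the corollary is stated without an explicit proof as an immediate combination of Theorem \ref{the:graafisuus1} with the proposition giving $\sum_i\hat{z}(i)=(E_1^{-1})^TD_1z$ for $\E_1$-restricted graphs, which is precisely what you spell out in both directions (forward by reading the types off an actual graph, converse by realizing the types via Theorem \ref{the:graafisuus1} and then inverting $(E_1^{-1})^TD_1$ to force $z(g)=z$). One small inaccuracy in your side remark: $D_1=\mathrm{diag}(|Orb_{Stab(g_i)}(1)|)$ records orbit sizes inside the fixed local graphs and does not depend on $n$; it is only the higher matrices $D_{1,2},\ldots$ that require $n$, which is recovered via Lemma \ref{lem:lakkk} exactly as you note.
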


Next consider the general case, where the graph is not restricted in any
way but the sets of local invariants in
$\E_1,\E_{1,2},\ldots,\E_{1,2,\ldots,k}$ are finite and the local $G$-posets
are constructed in the process (\ref{eq:process}).

The conditions in Theorem \ref{the:graafisuus1} imply the following
constraints for the $z(i)$, $z(i_1,i_2)$, $\ldots$, $z(i_1,\ldots,i_k)$-parameters.
\begin{corollary}\label{cor:koro}
The parameters $z(i),z(i_1,i_2),\ldots,z(i_1,\ldots,i_k)\geq 0 $ must satisfy 
\begin{itemize}
\item[i] the conjugate equations
\begin{equation}
z(i_1,\ldots,i_k)^{\pi}=z(i_{\pi(1)},\ldots,i_{\pi(k)})
\end{equation}
for all permutations in $S_k$, 
\item[ii] the sums 
\begin{equation}\label{lala12}
\sum_{i_k \neq i_1,\ldots,i_{k-1}}
z(i_1,\ldots,i_k)=D_{1,\ldots,k}P_{1,\ldots,k}z(i_1,\ldots,i_{k-1})
\end{equation}
and
\item[iii] the internal products of the local invariants which can be expressed as a linear
combination of the elements in the local $G$-poset
\begin{equation}
z(i_1,\ldots,i_k)_az(i_1,\ldots,i_k)_b =(E_{1,\ldots,k}^{-1} p)^T z(i_1,\ldots,i_k), 
\end{equation}
where 
\begin{displaymath}
p=[I_{1,\ldots,k}(g_a)(g_1)I_{1,\ldots,k}(g_b)(g_1),\ldots,I_{1,\ldots,k}(g_a)(g_{r_k})I_{1,\ldots,k}(g_b)(g_{r_k})]^T.
\end{displaymath}
\end{itemize}
\end{corollary}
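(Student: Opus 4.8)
The plan is to derive Corollary \ref{cor:koro} as a collection of necessary conditions that are already implicit in the machinery assembled before Theorem \ref{the:graafisuus1}, rather than proving anything genuinely new. The statement asserts that any realizable system of local parameters $z(i),z(i_1,i_2),\ldots,z(i_1,\ldots,i_k)$ must satisfy three families of constraints; since these parameters arise from an actual graph $g$ (that is the only case in which they are defined), each condition is a structural identity that holds for \emph{all} graphs, so the work is simply to point at the right earlier result for each item.

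For item (i), the conjugate equations, I would invoke the definition of $\pi$-conjugation in $\E_{1,\ldots,k}$ together with the observation that for a genuine graph $g$, the local subgraph $U_r(g,\cdot)$ does not depend on the \emph{order} in which the fixed points are listed, only on the set: if $\pi \in S_k$ permutes the labels $i_1,\ldots,i_k$, then $g(i_1,\ldots,i_k)^\pi \cong_{\mathrm{Stab}(1,\ldots,k)} g(i_{\pi(1)},\ldots,i_{\pi(k)})$ tautologically, and applying this component-wise to the invariant vector gives $z(i_1,\ldots,i_k)^\pi = z(i_{\pi(1)},\ldots,i_{\pi(k)})$. This is exactly the content used in Theorem \ref{the:graafisuus1}, so I would cite it there. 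For item (ii), the summation identities, I would appeal directly to Proposition \ref{pro:hukl} (equation \eqref{eq:ekl}) and multiply both sides by $E_{1,\ldots,k}^T$ to pass from the $\hat z$-parameters to the $z$-parameters, using that $z(\cdots) = E_{1,\ldots,k}^T \hat z(\cdots)$; the factor $P_{1,\ldots,k}$ and the diagonal $D_{1,\ldots,k}$ appear precisely as in that proposition, so \eqref{lala12} is just the restated form. For item (iii), the internal product relations, I would note that within a fixed local $G$-poset $\E_{1,\ldots,k}$ — whose group is the point-wise stabilizer $\mathrm{Stab}_{S_n}(1,\ldots,k)$, itself a permutation group — the product formula \eqref{eq:29} applies verbatim: the product of two local invariants $I_{1,\ldots,k}(g_a) I_{1,\ldots,k}(g_b)$ expands into a linear combination of the local invariants with coefficients read off from the $E$-transform $E_{1,\ldots,k}$, which is the assertion $z_a z_b = (E_{1,\ldots,k}^{-1} p)^T z$ once one identifies $p$ as the vector of products of evaluations $I_{1,\ldots,k}(g_a)(g_t) I_{1,\ldots,k}(g_b)(g_t)$.

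I expect the only mild subtlety — and hence the step I would write out most carefully — is item (iii): one must confirm that the product expansion coefficients for an \emph{incomplete} local $G$-poset are still given by the formula through $E_{1,\ldots,k}^{-1}$, since the clean inverse formula $b_{ij} = (-1)^{|g_i|-|g_j|}e_{ij}$ fails when the poset is not complete (as the excerpt explicitly warns in the $\E_1$ example). The resolution is that equation \eqref{eq:29} is stated for the set of \emph{all} graphs with at most $cv(g_i)+cv(g_j)$ vertices and "remains valid if we add all graphs to the set," so as long as $\E_{1,\ldots,k}$ is large enough to contain every graph appearing in the relevant products — which it is, by construction, whenever $cv(g_a)+cv(g_b)$ stays within the neighborhood radius $r$ — the linear combination is well-defined and its coefficients are obtained by solving $E_{1,\ldots,k}^T c = p$, i.e. $c = (E_{1,\ldots,k}^{-1})^T p$, giving the stated formula. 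The remaining items are immediate consequences of, respectively, the order-independence of set-indexed neighborhoods and Proposition \ref{pro:hukl}, so the corollary follows.
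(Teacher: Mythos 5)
Your overall route is the same as the paper's: item (i) is read off from the order-independence of the neighbourhood and the conditions of Theorem \ref{the:graafisuus1}, item (ii) from Proposition \ref{pro:hukl}, and item (iii) from a local version of the product formula whose coefficients are solved through the $E$-transform. Two points need repair, though.

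First, in item (iii) your ``careful'' step gets the linear system transposed. Evaluating the putative expansion $I_{1,\ldots,k}(g_a)I_{1,\ldots,k}(g_b)=\sum_l c_{ab}^l I_{1,\ldots,k}(g_l)$ on the graph $g_m$ gives $p_m=\sum_l e_{ml}c_{ab}^l$, i.e.\ $p=E_{1,\ldots,k}\,c_{ab}$ and hence $c_{ab}=E_{1,\ldots,k}^{-1}p$, which is what both the statement and the paper's proof assert. Your final formula ``solve $E^Tc=p$, i.e.\ $c=(E^{-1})^Tp$'' contradicts the very identity you are proving (and your own second paragraph, which has it right). Also, your claim that the closure of the product in $\E_{1,\ldots,k}$ holds ``by construction'' overstates the situation: the paper only asserts the relation for those products all of whose terms happen to lie in $\E_{1,\ldots,k}$, and the corollary is worded conditionally for exactly that reason.

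Second, for item (ii) you apply Proposition \ref{pro:hukl} directly and multiply by $E_{1,\ldots,k}^T$, but that proposition is stated only for $\E_{1,\ldots,i}$-restricted graphs, whereas Corollary \ref{cor:koro} concerns a completely general graph. The paper bridges this by first extending each $\E_{1,\ldots,j}$ to the infinite $G$-poset $\E_{1,\ldots,j}^*$ of all possible neighbourhoods, in which every graph is automatically restricted, so that (\ref{eq:ekl}) applies; equation (\ref{lala12}) is then the restriction of the resulting identity to the finite components. Without that extension (or an equivalent direct counting argument in the spirit of Lemma \ref{palautus}) your invocation of Proposition \ref{pro:hukl} is not licensed by its hypotheses.
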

\begin{proof}
Just extend the local $G$-poset $\E_1$ to the infinite $G$-poset $\E_1^*$ together
with the $E_1$-transform of infinite size. Do the same for $\E_{1,2},\ldots,
\E_{1,\ldots,k}$. Then the equations (\ref{eq:ekl}) imply (\ref{lala12}) when
the parameters in the finite parts $\E_1,\E_{1,2},\ldots,\E_{1,\ldots,k}$ are
considered.

The product formula must hold for the invariants for which the product terms
appear in $\E_{1,\ldots,k}$. This can be seen by writing local version of the product
formula in \cite{Tomi3} as follows. Suppose the product
$I_{1,\ldots,k}(g_a)I_{1,\ldots,k}(g_b)$ is the following linear combination:
\begin{equation}
I_{1,\ldots,k}(g_a)I_{1,\ldots,k}(g_b)=\sum_{g_l \in \E_{1,\ldots,k}} c_{ab}^l I_{1,\ldots,k}(g_l).
\end{equation}
Then the coefficients can be solved by 
\begin{equation}
c_{ab} = E_{1,\ldots,k}^{-1} p,
\end{equation}
where $p$ is the vector defined above. Naturally this product formula holds
only if all the terms appearing in the product are in $\E_{1,\ldots,k}$.
\end{proof}

Although the conjugate equations provide necessary and sufficient condition
for the graphic values in the restricted case, it is not easy to find whether
such a solution $z(i),z(i,j),\ldots,z(i_1,\ldots,i_s)$ exists. Even for
$z(i,j)$ this problem has been shown to be NP-complete if the number of
non-zero types is larger than $2$ \cite{Chrobak}. Interestingly, the complexity
is not known if the number of non-zero types is $2$.

We summarize this in the following corollary.
\begin{corollary}
Given the parameters $z=[I(g_1),\ldots,I(g_r)]$ and the partial solution
$z(i,j)=0$ for certain pairs $i,j$, the graph reconstruction is NP-complete in
the $\E_1$-restricted case if $|\E_1| >2$. 
\end{corollary}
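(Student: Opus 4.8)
The plan is to establish NP-completeness by reducing from the problem studied by Chrobak et al., namely the reconstruction of a graph from the ``types'' of its vertices when the number of non-zero types exceeds $2$. First I would argue membership in NP: given the parameters $z=[I(g_1),\ldots,I(g_r)]$ together with the constraints $z(i,j)=0$ for the prescribed pairs, a certificate is simply an explicit adjacency matrix $G$ of a candidate graph on $n=\sum_i y_i$ vertices (with $y=(E_1^{-1})^TD_1z$ computed as in Lemma \ref{lem:lakkk}); one checks in polynomial time that $I(g_t)(G)=z_t$ for all $t$ (the $g_t$ are fixed bounded-size local graphs, so each such count is polynomial in $n$) and that the designated $z(i,j)$ indeed vanish. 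Since the $\E_1$-restriction is a property one can also verify in polynomial time by inspecting the neighbourhood of each vertex, membership in NP follows.

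For hardness, the key observation is that, in the $\E_1$-restricted case, Corollary \ref{cor:lgkd} tells us that $z$ together with the choice of local types $\hat z(i)\in\{0,e_1,\ldots,e_{r_1}\}$ (constrained by $\sum_i\hat z(i)=(E_1^{-1})^TD_1z$) is \emph{equivalent} to the combinatorial data of a ``degree-constrained vertex-type multiset'' plus the conjugate/consistency equations of Theorem \ref{the:graafisuus1}. When we additionally fix $z(i,j)=0$ for certain pairs, we are precisely forbidding certain adjacencies, i.e. prescribing that the reconstructed graph avoid a given set of edges while realizing a prescribed multiset of local neighbourhood types. This is exactly the input format of the Chrobak-style reconstruction problem cited, where one must decide whether a graph exists with each vertex having one of a fixed list of local types. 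The reduction therefore takes an instance of that problem, chooses $\E_1$ to be the $G$-poset of the finitely many local graphs (vertex-neighbourhood types) appearing in the instance, reads off the corresponding global vector $z$ from the prescribed type-multiplicities via the relation $\sum_i\hat z(i)=(E_1^{-1})^TD_1z$ (invertibly, since $D_1$ and $E_1$ are invertible), and translates the ``non-edges'' of the instance into the partial solution $z(i,j)=0$. One then invokes Theorem \ref{the:graafisuus1}: a graph realizing the data exists iff the conjugate equations are solvable, which is iff the original Chrobak instance is a YES-instance. Since that problem is NP-complete as soon as the number of non-zero types is at least $3$, and $|\E_1|>2$ gives us at least three available (non-empty) types, the reduction is valid.

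The main obstacle I expect is making the translation between ``the number of non-zero types'' in the sense of \cite{Chrobak} and the condition $|\E_1|>2$ completely tight, and in particular checking that the conjugate-equation system of Theorem \ref{the:graafisuus1} really does collapse, in the $\E_1$-restricted setting, to exactly the consistency conditions appearing in the Chrobak problem rather than to something strictly stronger or weaker. One must verify that with $\E_1$ chosen as above the full sequence $\E_1,\E_{1,2},\ldots,\E_{1,\ldots,s}$ is reconstructible (so Theorem \ref{the:graafisuus1} applies), and that the higher-order parameters $z(i_1,\ldots,i_k)$, $k\geq 3$, impose no constraints beyond what pairwise adjacency already records in the bounded-type case — essentially the content of Example \ref{ex:arra} run in reverse. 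Once this dictionary is nailed down, the polynomial-time computability of $z$ from the type-multiplicities (and vice versa) is routine linear algebra with the fixed matrices $E_1$ and $D_1$, and the corollary follows.
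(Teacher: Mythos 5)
There is a genuine gap, and it is exactly the step you flag as ``the main obstacle'' and then defer. The problem in \cite{Chrobak} is a discrete-tomography question: decide whether a (generally \emph{non-symmetric}) matrix $Z$ with entries from more than two atom types exists with prescribed row sums $\sigma(i)$ and prescribed column sums $\theta(i)$. It is not, as your reduction treats it, already a problem about realizing a multiset of vertex-neighbourhood types by a graph. A graph adjacency structure is forced to be symmetric, and in this paper's formalism that symmetry is the conjugate equation $z(i,j)^{(ij)}=z(j,i)$. So a naive identification of the two problems changes the instance: imposing symmetry on the unknown matrix is an extra constraint that the Chrobak instance does not carry, and your reduction as stated would only show hardness of a symmetric variant whose complexity is not what \cite{Chrobak} establishes. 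Saying ``once this dictionary is nailed down the corollary follows'' concedes the entire content of the proof.

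The paper closes this gap with an explicit doubling gadget. Given a Chrobak instance on an $n\times n$ matrix, it builds a $2n\times 2n$ reconstruction problem whose first $n$ row sums are $\sigma(i)$, whose last $n$ column sums are $\theta(i)$, and whose remaining margins are the conjugates $\overline{\sigma}(i)$, $\overline{\theta}(i)$; the supplied partial solution $z(i,j)=0$ for $i,j\leq n$ and for $i,j\geq n+1$ annihilates the two diagonal blocks, so every admissible solution has the block form with $Z$ and $Z^{H}$ in the off-diagonal positions. The conjugate constraints are then satisfied automatically by this Hermitian structure, while the block $Z_{12}=Z$ remains free to realize an arbitrary asymmetric Chrobak instance; hence yes-instances correspond exactly. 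This is why the hypothesis about the partial solution $z(i,j)=0$ appears in the statement at all --- it is the mechanism of the reduction, not incidental data. Your NP-membership argument (polynomial-time evaluation of the finitely many bounded-size local invariants on a candidate adjacency matrix) is fine and agrees with the paper's.
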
 
\begin{proof}
The only technical difference is due to the conjugate constraints which are not
present in \cite{Chrobak}. However by extending the problem we are able to
find a suitable reconstruction problem where the conjugate constraints do not
restrict finding the solution. Let the original problem be of finding 
\begin{displaymath}
Z=\left[ \begin{array}{llll}
\cdot & z_{12} & \cdots & z_{1n}\\
z_{21}& \cdot & \cdots & z_{2n}\\
\vdots & & \ddots & \\
z_{n1}& \cdots &z_{n, n-1} & \cdot \end{array} \right]
\end{displaymath}
such that the row sums equal to $\sigma(i)$ and the column sums equal to
$\theta(i)$. Now construct $Z'$ of size $2n \times 2n$ and of form 
\begin{displaymath}
Z'=\left[ \begin{array}{ll}
Z_{11}&Z_{12}\\
Z_{21}&Z_{22} \end{array} \right],
\end{displaymath}
where the row sums add up to $\sigma(i)$ on the first $n$ rows
and the column sums add up to $\theta(i)$ on the last $n$ columns. Also we
expect the first $n$ columns to add up to $\overline{\sigma}(i)$ and the last
$n$ rows add up to $\overline{\theta}(i)$, where $\overline{\sigma}$ denotes
the conjugation. We are expecting a solution of form
\begin{displaymath}
Z'=\left[ \begin{array}{ll}
0&Z\\
Z^H&0 \end{array} \right],
\end{displaymath}
where $Z^H$ denotes the Hermitian transpose of $Z$ i.e. $z_{ij}=\overline{z_{ji}}$.
 Clearly any solution $Z$ to the original problem is also solution to this
 problem with the conjugate constraints. 

We stated the corollary so that we can assert $z'_{ij}=0$ for $i,j \leq n$
and $z'_{ij}=0$ for $i,j \geq n+1$. The solution $Z'$ to this problem yields
clearly a solution $Z=Z_{12}$ to the original problem.

Any reconstruction of the adjacency matrix yields this solution in polynomial
time  since the local invariants are of finite degree and thus polynomial time
computable. Thus the problem is in NP and by the reasoning above NP-complete if
the partial solution is provided.
\end{proof}
%%%%%%%%%%%%%%%%%%%%%%%%%%%%%%%%%%%%%%%%%%%%%%%%%%%%%%%%%%%%%%%%%%%%%%%%%%%%%%%%%
\section{Graphs with Finitely Generated Local Neighbourhoods}
In this section we combine the results from the two previous sections to find a sufficient condition for the graphic tensor $z,z(i)$, $z(i_1,i_2)$
,$\ldots,z(i_1,\ldots,i_s)$. This result gives both sufficient and
necessary constraints for all graphs whose local neighbourhoods are generated
by the connected graphs in $\E_1$. This result is much more compact than
Corollary \ref{cor:lgkd} since the local $G$-posets can be infinitely large, yet parametrized by finitely many connected
components. For instance all graphs are locally finitely generated by the
locally connected graph invariant $I_1(a_{12})$. In this case also the graph
reconstruction problem has a polynomial time solution
\cite{Havel},\cite{Hakimi}. This corresponds to the neighbourhoods in Figure \ref{fig:adj}.

The parametrization is not complete in general but it is sufficient having
the constraint
\begin{equation}
(E_S^{-1})^Tz(S) \geq 0
\end{equation}
for the graphic vector $z(S)=[I_S(g_1),\ldots,I_S(g_r)]^T$. Theorem
\ref{the:graafisuus1} can be applied to the connected parameters to glue the
local parameters at different locations yielding the result.

We consider the local $G$-posets $\E_1,\E_{1,2},\ldots,\E_{1,\ldots,s}$ consisting of
$\{1,2,\ldots,k\}$-connected graphs for each $k=1,\ldots,s$ but
constructed otherwise similarly as before:
\begin{equation}\label{prosessi}
\E_k = \{U_r(1,g^\pi) \cap \cdots U_r(k,g^\pi) | \pi \in \mathfrak{S}_n, g \in \E \}
\setminus \ \mathcal{U}_{1,2,\ldots,k},
\end{equation}
where $\mathcal{U}_{1,2,\ldots,k}$ denotes the set of disconnected graphs minus
the interior graphs. We must add the interiors of all graphs in
$\E_{1,\ldots,k}$ since the interior is not generated by the graphs $g_i(S)
\setminus_e S \neq \emptyset$.

The projectors $P_k$ and the diagonal matrices $D_k$ are restricted on the
connected components but remain otherwise the same.
\begin{figure}[!htk]
\begin{center}
\includegraphics[width=5cm]{./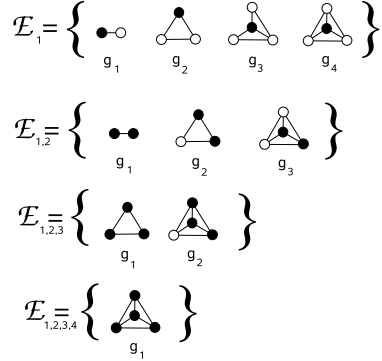}
\caption{Locally connected $G$-posets $\E_{1}$, $\E_{1,2}$, $\E_{1,2,3}$ and $\E_{1,2,3,4}$ with interiors.}\label{fig:con123}
\end{center}
\end{figure}
In Figure \ref{fig:con123} is shown certain locally connected $G$-posets which
are derived in the process \ref{prosessi}. Notice we are not anymore
restricted to trivalent graphs.

Let $\E_{1,\ldots,k}^*$ denote the $G$-poset of local graphs generated by the locally connected graphs in $\E_{1,\ldots,k}$.

We say the sequence $\E_1,\E_{1,2},\ldots,\E_{1,\ldots,s}$ of locally connected graphs is \emph{reconstructible} if the sequence $\E_1^*,\E_{1,2}^*,\ldots,\E_{1,\ldots,s}^*$ is reconstructible. The next lemma helps to characterize when the sequence of extended $G$-posets is graphic.

\begin{lemma}
The sequence $\E_1^*,\E_{1,2}^*,\ldots,\E_{1,\ldots,s}^*$ is reconstructible if the connected graphs in $\E_{1,\ldots,k}$ have linearly independent representation in terms of parameters in $\E_{1,\ldots,k+1}^*$ for all $k$.
\end{lemma}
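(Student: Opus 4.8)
The plan is to reduce the statement about reconstructibility of the extended $G$-posets $\E_1^*,\E_{1,2}^*,\ldots,\E_{1,\ldots,s}^*$ to the definition of reconstructibility given just before Theorem \ref{the:graafisuus1}, namely that each $g_i(1,\ldots,k) \in \E_{1,\ldots,k}^*$ must be uniquely determined by the collection of graphs $U_r(g_i(1,\ldots,k),j)$, $j \neq 1,\ldots,k$, in $\E_{1,\ldots,k+1}^*$. First I would recall from Proposition \ref{pro:nesu} and the surrounding discussion that every graph in $\E_{1,\ldots,k}^*$ is of the form $\coprod_{\{1,\ldots,k\}} m_i g_i$ for non-negative multiplicities $m_i$ attached to the connected components $g_i(1,\ldots,k)$ of $\E_{1,\ldots,k}$, and the multiplicity vector $m$ is recovered from the local invariant vector by $m = (C^{-1})^T z$, where $C$ is the $E$-transform of the connected part. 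Thus a graph in $\E_{1,\ldots,k}^*$ is completely encoded by its multiplicity vector $m$.

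The key step is then to express the ``descendant'' operation $g \mapsto \big(U_r(g,j)\big)_{j}$ in terms of these multiplicity vectors and show it is injective exactly under the linear-independence hypothesis. Concretely, for a connected generator $g_i(1,\ldots,k)$ the equations (\ref{eq:ekl}) / (\ref{lala12}) from Proposition \ref{pro:hukl} and Corollary \ref{cor:koro} describe how the parameters $z(1,\ldots,k)$ push forward to the sums $\sum_{j} \hat z(1,\ldots,k,j) = (E_{1,\ldots,k+1}^{-1})^T D_{1,\ldots,k+1} P_{1,\ldots,k+1} z(1,\ldots,k)$; after restricting to the connected part this is a \emph{linear} map sending the multiplicity vector of $g$ in $\E_{1,\ldots,k}^*$ to the (aggregated) multiplicity data of its $(k{+}1)$-fixed-point neighbourhoods in $\E_{1,\ldots,k+1}^*$. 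I would argue that the images of the connected generators $g_i(1,\ldots,k)$ under this map are precisely the rows expressing each $g_i(1,\ldots,k)$ in terms of the parameters in $\E_{1,\ldots,k+1}^*$, so by hypothesis these image vectors are linearly independent; since multiplicity vectors live in $\mathbb{Z}_+^{r_k}$ and the map is linear with a spanning set of independent images, it is injective on the cone of non-negative multiplicity vectors. Injectivity of the descendant map at every level $k=1,\ldots,s-1$ is exactly reconstructibility of the sequence $\E_1^*,\ldots,\E_{1,\ldots,s}^*$, which by the definition just introduced is what had to be shown.

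The main obstacle I anticipate is making the bookkeeping precise: one must be careful that the interiors (the internal edges $a_{ij}$, $i,j \in S$) are handled separately — Proposition \ref{pro:lgene} guarantees they are generated by the internal edges alone, so they contribute a fixed, determined part to every $g_i(S)$ and do not interfere with the injectivity argument on the $S$-connected part — and that the aggregation $\sum_j \hat z(\ldots,j)$ in (\ref{eq:ekl}) does not lose information relative to the definition of reconstructibility, which speaks of the individual neighbourhoods $U_r(g_i(1,\ldots,k),j)$. The resolution is that for a single connected generator $g_i(1,\ldots,k)$ (as opposed to a general element of $\E_{1,\ldots,k}^*$) the multiset $\{U_r(g_i,j)\}_j$ is determined by, and determines, the aggregate, because $g_i$ itself is connected and has a bounded number of non-fixed vertices; so the two notions of ``descendant data'' coincide at the level of generators, which is all the definition of reconstructibility requires.
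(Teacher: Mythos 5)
Your argument is essentially the paper's own proof: the paper likewise encodes an element of $\E_{1,\ldots,k}^*$ by the multiplicity vector of its connected components and observes that, since each connected generator's descendant parameters in $\E_{1,\ldots,k+1}^*$ form a linearly independent set, the multiplicities $m_i$ of $m_1 g_1 \coprod_{1,\ldots,k} \cdots \coprod_{1,\ldots,k} m_r g_r$ can be read off from the level-$(k{+}1)$ parameters, which is reconstructibility. Your additional remarks on the interiors and on the aggregate sum versus the individual neighbourhoods are sensible elaborations but do not change the route.
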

\begin{proof}
Let $E_{1,\ldots,k}$ be the Mnukhin transform of $\E_{1,\ldots,k}^*$. If the parameters $z(i_1,\ldots,i_{k+1})$ obtained for locally connected graphs in $\E_{1,\ldots,k}$ by 
\begin{equation}
\sum_{i_{k+1} \neq i_1,\ldots,i_k}\hat{z}(i_1,\ldots,i_{k+1}) = (E_{1,\ldots,k+1}^{-1})^T D_{1,\ldots,k+1} P_{1,\ldots,k+1} z(i_1,\ldots,i_k)
\end{equation}
are linearly independent for each locally connected graph in $\E_{1,\ldots,k}$ then we can read the multiplicities $m_i$ of the graph 
\begin{displaymath}
m_1 g_1 \coprod_{1,\ldots,k} \cdots \coprod_{1,\ldots,k} m_rg_r
\end{displaymath}
from the parameters $z(i_1,\ldots,i_{k+1})$ since each connected graph in $\E_{1,\ldots,k}$ corresponds to a linearly independent vector.
\end{proof}

The sequence of connected graphs in Figure \ref{fig:con123} is reconstructible by this lemma.

\begin{theorem}
Connected vectors $z,z(i)$, $z(i_1,i_2)$, $\ldots,z(i_1,\ldots,i_s)$ are
graphic if
\begin{equation}\label{eq:ldld9}
\sum_{i_k \neq i_1,\ldots,i_{k-1}} z(i_1,\ldots,i_k)=D_kP_kz(i_1,\ldots,i_{k-1}),
\end{equation}
\begin{eqnarray}\label{eq:elele}
z(i_1,\ldots,i_k) \in \mathbb{Z}_+^{r_k} \\ \nonumber
(E_{1,\ldots,k}^{-1})^T z(i_1,\ldots,i_k) \geq 0
\end{eqnarray}
and the conjugate equations hold
\begin{equation}
\forall \pi \in \mathfrak{S}_k:z(i_1,\ldots,i_k)^{\pi}=z(i_{\pi(1)},\ldots,i_{\pi(k)})
\end{equation}
for all $k=1,\ldots,s$. Moreover this condition is both necessary and
sufficient for all graphs whose local neighbourhoods are generated/separated by the
connected graphs in $\E_1$.
\end{theorem}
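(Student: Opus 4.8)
The plan is to assemble the theorem from the machinery already developed, treating it as a "gluing" statement: the local parameters at varying numbers of fixed points form a coherent tensor, and coherence plus local graphicness at each level forces global graphicness. I would first dispatch the \emph{sufficiency} direction. Given connected vectors $z, z(i), \ldots, z(i_1,\ldots,i_s)$ satisfying (\ref{eq:ldld9}), (\ref{eq:elele}) and the conjugate equations, the condition $(E_{1,\ldots,k}^{-1})^T z(i_1,\ldots,i_k) \geq 0$ is exactly the hypothesis of Proposition \ref{pro:nesu} applied in the local $G$-poset $\E_{1,\ldots,k}^*$ generated by the connected graphs; hence each $z(i_1,\ldots,i_k)$ is realized by an actual local graph $\coprod_{1,\ldots,k} m_i g_i$ with $m=(E_{1,\ldots,k}^{-1})^T z(i_1,\ldots,i_k)$. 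So at each individual location we have a genuine local graph. What remains is to check these local graphs are mutually consistent, and this is precisely what Theorem \ref{the:graafisuus1} provides once we pass to the (possibly infinite) extended $G$-posets $\E_{1,\ldots,k}^*$: the conjugate equations (\ref{eq:konjugaa}) together with the descent relations (\ref{eq:ekl}) guarantee that whenever an edge or a connection appears in one location's local graph it appears consistently in every overlapping location, so the adjacency data $z_1(i,j)$ stitches together into a single adjacency matrix. I would remark that the reconstructibility hypothesis (ensured by the preceding lemma, since the connected graphs in $\E_{1,\ldots,k}$ have linearly independent representations in $\E_{1,\ldots,k+1}^*$) is what lets us read the connected multiplicities back off the deeper parameters, so the tensor is not just consistent but uniquely decodable.

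Next I would handle the \emph{necessity} direction for graphs whose local neighbourhoods are generated/separated by the connected graphs in $\E_1$. Here we start from an actual graph $h$, form all its local invariant vectors $z(i_1,\ldots,i_k)$ with $k \leq s$, and must verify the three families of constraints. The conjugate equations are immediate: $z(i_1,\ldots,i_k)$ and $z(i_{\pi(1)},\ldots,i_{\pi(k)})$ describe the same partially labelled subgraph of $h$ read with permuted fixed points, so they are $\pi$-conjugate by definition. The descent relations (\ref{eq:ldld9}) are Proposition \ref{pro:hukl} restricted to the connected components (equivalently Lemma \ref{palautus}(i) applied locally), counting the orbit sizes $|Orb_{Stab(\cdot)}(i_k)|$ which are exactly the diagonal entries of $D_k$. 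For (\ref{eq:elele}), positivity is trivial, and $(E_{1,\ldots,k}^{-1})^T z(i_1,\ldots,i_k)\geq 0$ holds because, by the hypothesis that local neighbourhoods are generated by connected graphs in $\E_1$ (and inductively in $\E_{1,\ldots,k}$), the actual local graph at $(i_1,\ldots,i_k)$ \emph{is} a disjoint union $\coprod_{1,\ldots,k} m_i g_i$ with genuine non-negative multiplicities $m_i$, and by Lemma \ref{lem:summ} these multiplicities are precisely $(E_{1,\ldots,k}^{-1})^T z(i_1,\ldots,i_k)$.

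The main obstacle, and the step I would spend the most care on, is the bookkeeping in the sufficiency direction: showing that edge-consistency at the top level $s$ propagates \emph{downward} to all lower levels and hence to the reconstructed adjacency matrix, while simultaneously handling the subtlety flagged in Example \ref{ex:arra} — that pairwise conjugacy of the $z(i,j)$ alone is \emph{not} enough, and one genuinely needs the triples (and higher) to rule out phantom non-graphic tensors. Concretely, the argument of Theorem \ref{the:graafisuus1} must be re-run verbatim but now inside the extended $G$-posets $\E_{1,\ldots,k}^*$ rather than the finite ones; the only thing to verify is that the finiteness of $\E_{1,\ldots,k}$ was used in that proof only to guarantee termination of the process at level $s$ (i.e. $\E_{1,\ldots,s+1}=\emptyset$ after removing disconnected graphs and interiors), and that the descent/conjugation arguments themselves never used finiteness. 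Once that is checked, the theorem follows: reconstructibility lets us descend $z(i_1,\ldots,i_s) \to z(i_1,\ldots,i_{s-1}) \to \cdots \to z(i_1,i_2) \to z(i)$, at each stage the system is graphic because the one below it is, and the base case gives the graph realizing $z,z(i)$.
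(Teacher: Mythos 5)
Your overall architecture is right --- realize each $z(i_1,\ldots,i_k)$ locally via Proposition \ref{pro:nesu} and then glue with Theorem \ref{the:graafisuus1} applied to the extended $G$-posets $\E_{1,\ldots,k}^*$ --- and your necessity argument matches what the paper intends. But there is a genuine gap in the sufficiency direction. The hypotheses of Theorem \ref{the:graafisuus1} are conjugate equations and descent equations for the \emph{full} local parameter vectors over $\E_{1,\ldots,k}^*$, i.e.\ including the $S$-unconnected local graphs generated by the connected ones, whereas the present theorem assumes these identities only for the connected parameters. You treat the passage as automatic (``re-run verbatim inside the extended $G$-posets'') and locate the main difficulty elsewhere (finiteness and termination of the process), but transferring the hypotheses from the connected parameters to the generated ones is exactly the substantive content of the paper's proof, and it does not come for free.

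The paper closes this gap as follows. By Proposition \ref{pro:lgene} every disconnected component $z_u(S)$ is a polynomial $P_u$ in the connected vector, $z_u(S)=P_u(z(S))$. Given the conjugate equation $z(S)^\pi=z(S^\pi)$ for the connected part, one still has to verify that $P_u(z(S)^\pi)=P_u(z(S))^\pi$, which requires checking that the local product formula is equivariant under $\pi$, i.e.\ that $I_S(g_i^\pi)I_S(g_j^\pi)=\sum_k c_{ij}^k I_S(g_k^\pi)$ holds with the \emph{same} structure constants; only then does $z_u(S^\pi)=z_u(S)^\pi$ follow, giving the conjugate equations for all of $\E_{1,\ldots,k}^*$. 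A parallel verification is needed for the descent relations (\ref{eq:ldld9}): since every $\{1,\ldots,k\}$-connected graph is also $\{1,\ldots,k-1\}$-connected, the orbit-counting identity behind Proposition \ref{pro:hukl} restricts to the connected components and then extends to the generated parameters. Adding these two checks would complete your argument; without them the appeal to Theorem \ref{the:graafisuus1} is not yet licensed.
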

\begin{proof}
The idea is that the connected graphs give a representation for $\E_{1,\ldots,k}^*$ and the equations (\ref{eq:ldld9}) and (\ref{eq:elele}) imply the required equations for the parameters in $\E_{1,\ldots,k}^*$ given by Theorem \ref{the:graafisuus1}.

First we show that the conjugate equations for the connected components imply also isomorphisms for the disconnected local graphs. In other words if
$z(i_1,\ldots,i_r)^\pi=z(i_{\pi},\ldots,i_{\pi(k)})$ holds for the connected
components, it must hold for the disconnected components too.

Let $z_u(S)$ denote some disconnected component at the local
neighbourhood of $S$. By Proposition \ref{pro:lgene} there exists a
polynomial $P_u$ s.t. $z_u(S)=P_u(z(S))$, where $z(S)$
is the connected invariant vector. We have $z(S)^\pi=z(S^\pi)$ by the
conjugate equations and on the other hand is is obvious that
$z_u(S^\pi)=P_u(z(S^\pi))$. This implies that $z_u(S^\pi)=P_u(z(S)^\pi)$. On
the other hand we have
\begin{equation}
I_S(g_i^\pi)I_S(g_j^\pi) = \sum_{g_k} c_{ij}^k I_S(g_k^\pi)
\end{equation}
implying that $P_u(z(S)^\pi)=P_u(z(S))^\pi$. Thus $z_u(S^\pi)=z_u(S)^\pi$.

Secondly all the $1,2,\ldots,k$-connected are also $1,2,\ldots,k-1$-connected. Thus the equations 
\begin{equation}a
\sum_{i_k \neq i_1,i_2,\ldots,i_{k-1}} I_{1,\ldots,k}(g)^{\pi_{(1,\ldots,k),(i_1,\ldots,i_k)}}(h) \\ \nonumber
=|Orb_{Stab(g,1,2,\ldots,k-1)}(k)|I_{1,\ldots,k-1}(g)^{\pi_{(1,\ldots,k-1),(i_1,\ldots,i_{k-1})}}(h)
\end{equation}a
implied by the proof of Proposition \ref{pro:hukl} yield the equations
\begin{equation}
\sum_{i_k \neq i_1,\ldots,i_{k-1}} z(i_1,\ldots,i_k)=D_kP_kz(i_1,\ldots,i_{k-1})
\end{equation}
determining the $1,\ldots,k$-connected parameters.
\end{proof}

%%%%%%%%%%%%%%%%%%%%%%%%%%%%%%%%%%%%%%%%%%%%%%%%%%%%%%%%%%%%%%%%%%%%%%%%%%%%%%%%%

\subsection{Products in Local $G$-posets}
We investigate the product formula for local invariants where the factors
have different fixed points $A$ and $B$.

Let $\equiv_a$ be the equivalence relation of ordered sets defined by $\Delta_1 \equiv_a \Delta_2$ if $a(\Delta_1)
\cong a(\Delta_2)$ with respect to the permutation group $\mathrm{Stab}(A)$,
where $A \subseteq \Delta_1,\Delta_2$. Then let $V_{a}^{ab}$ be the set of
ordered sets of vertices of size $|(A \cup B) \setminus A|$ in $V \setminus
A$, where $V=\{1,2,\ldots,n\}$ and $A,B \subseteq V$. Define $V_{b}^{ab}$
similarly by subtracting the set $B$.

Consider the local $G$-poset $\E_{A \cup B}=\{g_1,\ldots,g_r\}$ having the
$E$-transform $E_{A \cup B}$ and containing all the invariants appearing in
the following product. 
\begin{proposition}\label{sufff}
The product of two local invariants at vertices $A$ and $B$ equals
\begin{equation}
I_{A}(a) I_{B}(b) = \sum_{g \in \E_{A \cup B}} c_{a,b}^g I_{A \cup B}(g),
\end{equation}
where the coefficients $c_{a,b}^g \in \mathbb{Z}_+$ are given as a vector over $g$ by
\begin{equation}
\mathbf{c}_{a,b}=E_{A\cup B}^{-1} \left( \sum_{\Delta_a \in V_{a}^{ab}/\equiv_a}
  \mathbf{I}_{A \cup \Delta_a}^a \right)\left( \sum_{ \Delta_b \in V_{b}^{ab}/\equiv_b
  } \mathbf{I}_{B \cup \Delta_b}^b \right),
\end{equation}
where 
\begin{equation}
\mathbf{I}_{A \cup \Delta_a}^a=[I_{A \cup \Delta_a}(a)(g_1),\ldots,I_{A \cup \Delta_a}(a)(g_r) ]^T,
\end{equation}
and
\begin{equation}
\mathbf{I}_{B \cup \Delta_b}^b=[I_{B \cup \Delta_b}(b)(g_1),\ldots,I_{B \cup \Delta_b}(b)(g_r) ]^T.
\end{equation}
\end{proposition}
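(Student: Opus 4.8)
The plan is to reproduce the standard derivation of Mnukhin's product formula (equation~(\ref{eq:29}), cf. Corollary~\ref{cor:koro}(iii)), but carried out inside the stabilizer subgroup $\mathrm{Stab}_{S_n}(A\cup B)$ rather than $S_n$, with one extra ingredient: a \emph{local} version of Lemma~\ref{palautus}(ii) that refines each of $I_A(a)$ and $I_B(b)$ down to the fixed-point level $A\cup B$. Throughout, $n$ is taken large enough that all monomials involved live in $\mathbb{C}[a_{ij}]$ on these $n$ vertices; the invariants do not depend on the particular such $n$.

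First I would settle existence and integrality. Since $\mathrm{Stab}_{S_n}(A)\supseteq\mathrm{Stab}_{S_n}(A\cup B)$, the polynomial $I_A(a)$ is invariant under $\mathrm{Stab}_{S_n}(A\cup B)$, and likewise $I_B(b)$; hence so is the product $I_A(a)I_B(b)$. As a product of orbit sums (all of whose monomial coefficients are $1$) it has non-negative integer coefficients, and being $\mathrm{Stab}_{S_n}(A\cup B)$-invariant it is therefore a non-negative integer combination of the orbit sums $I_{A\cup B}(g)$. Every $g$ that occurs lies in $\E_{A\cup B}$ by hypothesis, so $I_A(a)I_B(b)=\sum_{g\in\E_{A\cup B}}c_{a,b}^g\,I_{A\cup B}(g)$ with $c_{a,b}^g\in\mathbb{Z}_+$. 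Next I would solve for these coefficients by evaluation: plugging each representative $g_l\in\E_{A\cup B}$ into this identity, and using $I_{A\cup B}(g)(g_l)=(E_{A\cup B})_{lg}$ together with multiplicativity of polynomial evaluation, gives
\begin{equation}
I_A(a)(g_l)\,I_B(b)(g_l)=[I_A(a)I_B(b)](g_l)=\sum_{g\in\E_{A\cup B}}c_{a,b}^g(E_{A\cup B})_{lg},
\end{equation}
that is $\mathbf{p}=E_{A\cup B}\,\mathbf{c}_{a,b}$ with $\mathbf{p}_l=I_A(a)(g_l)\,I_B(b)(g_l)$. As $E_{A\cup B}$ is lower-triangular with unit diagonal with respect to the subgraph-containment order on a complete $G$-poset containing $\E_{A\cup B}$, it is invertible, whence $\mathbf{c}_{a,b}=E_{A\cup B}^{-1}\mathbf{p}$.

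It then remains to identify $\mathbf{p}$ with the coordinatewise product of the two summed vectors in the statement. Here the local refinement enters: because $\mathrm{Stab}_{S_n}(A)$ is itself a permutation group, Lemma~\ref{palautus}(ii) applies to the monomial $a$ with fixed points $A$ inside $\mathrm{Stab}_{S_n}(A)$, and choosing the refining tuples to have size $|(A\cup B)\setminus A|$ (so that $|A\cup\Delta_a|=|A\cup B|$) yields, as polynomials,
\begin{equation}
I_A(a)=\sum_{\Delta_a\in V_a^{ab}/\equiv_a}I_{A\cup\Delta_a}(a),\qquad I_B(b)=\sum_{\Delta_b\in V_b^{ab}/\equiv_b}I_{B\cup\Delta_b}(b).
\end{equation}
Evaluating at $g_l$ gives $I_A(a)(g_l)=\bigl(\sum_{\Delta_a}\mathbf{I}^a_{A\cup\Delta_a}\bigr)_l$ and similarly for $b$; multiplying these coordinatewise reconstitutes $\mathbf{p}$, so
\begin{equation}
\mathbf{c}_{a,b}=E_{A\cup B}^{-1}\Bigl(\sum_{\Delta_a\in V_a^{ab}/\equiv_a}\mathbf{I}^a_{A\cup\Delta_a}\Bigr)\Bigl(\sum_{\Delta_b\in V_b^{ab}/\equiv_b}\mathbf{I}^b_{B\cup\Delta_b}\Bigr),
\end{equation}
the two parenthesised vectors being multiplied coordinatewise, which is the asserted formula.

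The main obstacle is the refinement step: one must check that Lemma~\ref{palautus}(ii) is being invoked with exactly the right index set --- ordered tuples from $V\setminus A$ of size $|(A\cup B)\setminus A|$, modulo the relation $\equiv_a$ that identifies $\Delta,\Delta'$ precisely when $a(A\cup\Delta)\cong a(A\cup\Delta')$ under $\mathrm{Stab}_{S_n}(A)$ --- and that this cardinality choice is what makes each refined orbit sum $I_{A\cup\Delta_a}(a)$ evaluable on $\E_{A\cup B}$ in a manner compatible with the $E_{A\cup B}$-bookkeeping used to invert. Once the stabilizer subgroups are put on the same footing as $S_n$ (as noted in Section~\ref{sec:localinvariants}), the remaining ingredients --- multiplicativity of evaluation, invertibility of $E_{A\cup B}$, and non-negativity and integrality of the $c_{a,b}^g$ --- are routine.
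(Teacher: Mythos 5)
Your proof is correct and follows essentially the same route as the paper: decompose each factor as $I_A(a)=\sum_{\Delta_a}I_{A\cup\Delta_a}(a)$ (and likewise for $b$), observe the product is $\mathrm{Stab}(A\cup B)$-invariant, and recover the coefficients by applying $E_{A\cup B}^{-1}$ to the coordinatewise product of the evaluation vectors. You simply spell out the evaluation-and-inversion step and the integrality of the $c_{a,b}^g$, which the paper leaves implicit.
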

\begin{proof}
We write
\begin{equation}
I_{A}(a)=\sum_{\Delta_a \in V_{a}^{ab}/\equiv_a} I_{A \cup \Delta_a}(a)
\end{equation}
and $I_{B}(b)$ similarly. As the product is invariant only in $\mathrm{Stab}(A
\cup B)$ we must find the linear combination in $\E_{A \cup B}$. This can be
done by taking the inverse of the product of the two vector sums given above.
\end{proof}

\section{Ramsey Invariants}\label{sec:ram}
We want to express $I({\overline K}_k)$ in terms of basic graph
invariants. We start with the following lemma.

\begin{lemma}\label{lem:anti}
Let $g$ be the structure of the monomial $a_{\tau_1}a_{\tau_2}\cdots a_{\tau_d}$. Then 
\begin{eqnarray}
\sum_{\rho \in S_n/Stab(a_{\tau_1}a_{\tau_2}\cdots a_{\tau_d}) }(1-a_{\rho(\tau_1)})(1-a_{\rho(\tau_2)})\cdots (1-a_{\rho(\tau_d)}) \\ \nonumber
=\sum_{a \subseteq g}(-1)^{|a|}\frac{I(a)(g)|Stab(a)|}{|Stab(g)|}I(a), \label{eq:kaks}
\end{eqnarray}
where $|a|$ is the number of edges in $a$ and the sum is over all unlabeled subgraphs of the graph $g$. 
\end{lemma}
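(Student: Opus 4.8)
The plan is to prove this as an identity of polynomials in the $a_{ij}$ by expanding the left-hand side into a signed sum of monomials and collecting those that lie in a common $S_n$-orbit. Writing $E(g)=\{\tau_1,\dots,\tau_d\}$ for the edge set, I would first expand, for a fixed representative $\rho$,
\[
(1-a_{\rho(\tau_1)})\cdots(1-a_{\rho(\tau_d)})=\sum_{b\subseteq E(g)}(-1)^{|b|}\,\rho(b),
\]
where $\rho(b)$ denotes the monomial of the labelled subgraph obtained by applying $\rho$ to the edges of $b$. Since $\mathrm{Stab}(g)$ merely permutes $E(g)$, this inner sum, hence the whole summand, depends on $\rho$ only through the coset $\rho\,\mathrm{Stab}(g)$; converting the sum over cosets into a sum over all of $S_n$ therefore produces a factor $1/|\mathrm{Stab}(g)|$, giving
\[
\text{LHS}=\frac{1}{|\mathrm{Stab}(g)|}\sum_{\rho\in S_n}\ \sum_{b\subseteq E(g)}(-1)^{|b|}\,\rho(b).
\]

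Next I would compare coefficients of a fixed monomial. The double sum above is manifestly $S_n$-invariant, so all monomials in a given orbit receive the same coefficient; it thus suffices, for each unlabelled subgraph $a\subseteq g$, to compute the coefficient of one monomial $m$ of structure $a$, which equals $(-1)^{|a|}/|\mathrm{Stab}(g)|$ times
\[
N(m):=\#\{(\rho,b)\ :\ b\subseteq E(g),\ \rho(b)=m\}.
\]
To evaluate $N(m)$: such a $b$ must be one of the $I(a)(g)$ edge-subsets of $g$ isomorphic to $a$, and for each fixed such $b$ a permutation $\rho$ with $\rho(b)=m$ is the same thing as a graph isomorphism from $b$ onto $m$ — there are $|\mathrm{Aut}(a)|$ of these — together with a completely arbitrary bijection of the remaining $n-cv(a)$ vertices; this correspondence is a bijection onto the set of admissible $\rho$. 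Hence $N(m)=I(a)(g)\,|\mathrm{Aut}(a)|\,(n-cv(a))!$. Since a permutation fixing $m$ restricts to an automorphism of $a$ on the $cv(a)$ vertices carrying edges and to an arbitrary permutation off them, $|\mathrm{Stab}(a)|=|\mathrm{Aut}(a)|\,(n-cv(a))!$, so $N(m)=I(a)(g)\,|\mathrm{Stab}(a)|$.

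Combining, every monomial of structure $a$ occurs in the left-hand side with coefficient $(-1)^{|a|}I(a)(g)\,|\mathrm{Stab}(a)|/|\mathrm{Stab}(g)|$, and as $I(a)$ is precisely the orbit sum of such monomials (with $a$ ranging exactly over the subgraphs of $g$, the empty one supplying the constant term), the left-hand side equals $\sum_{a\subseteq g}(-1)^{|a|}\frac{I(a)(g)\,|\mathrm{Stab}(a)|}{|\mathrm{Stab}(g)|}I(a)$, as claimed. The points I expect to need the most care are the well-definedness of the summand modulo $\mathrm{Stab}(g)$ (which is what legitimises the factor $1/|\mathrm{Stab}(g)|$) and the counting of $N(m)$ — in particular verifying that extending an isomorphism $b\to m$ by an arbitrary permutation of the complementary vertices hits each admissible $\rho$ exactly once; everything else is routine.
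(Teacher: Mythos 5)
Your proposal is correct and follows essentially the same route as the paper: the paper's (much terser) proof also counts occurrences of monomials of structure $a$ across the $n!/|\mathrm{Stab}(g)|$ expanded terms and divides by the orbit size $n!/|\mathrm{Stab}(a)|$ to obtain the coefficient $I(a)(g)|\mathrm{Stab}(a)|/|\mathrm{Stab}(g)|$. Your version merely makes explicit the two points the paper leaves implicit — the $S_n$-invariance that justifies uniformity of the coefficients, and the count $N(m)=I(a)(g)\,|\mathrm{Stab}(a)|$ — so it is a sound, more detailed rendering of the same argument.
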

\begin{proof}
 The number of terms in the first sum is $n!/|Stab(g)|$. Each of these terms contains $I(a)(g)$ monomials of the invariant $I(a)$. Since the number of monomials in $I(a)$ is $n!/|Stab(a)|$ we get the coefficient $\frac{I(a)(g)|Stab(a)|}{|Stab(g)|}$ for $I(a)$. 
\end{proof}

By using this we have
\begin{eqnarray}
I({\overline K}_k)=\sum_{\rho \in S_n/Stab(a_{12}a_{13}\cdots a_{k-1 \ k})} (1-a_{\rho(12)})(1-a_{\rho(13)})\cdots (1-a_{\rho(k-1 \ k)}) \\ \nonumber
= \sum_{A \subseteq K_k}(-1)^{|A|}\frac{I(A)(K_k)|Stab(A)|}{|Stab(K_k)|}I(A).
\end{eqnarray}
This yields
\begin{eqnarray}
I({\overline K}_k)=\sum_{A \subseteq K_k}(-1)^{|A|}\frac{|Stab_{S_n}(A)|}{(n-k)!|Stab_{S_k}(A)|}I(A),
\end{eqnarray}
following from $I(A)(K_k)=k!/|Stab_{S_k}(A)|$ and $Stab(K_k)=(n-k)!k!$. Let $cv(A)$ denote the number of vertices connected to edges in the graph $A$. Since $|Stab_{S_n}(A)|=|Stab_{S_{|A|}}(A)|(n-cv(A))!$ and $|Stab_{S_k}(A)|=|Stab_{S_{|A|}}(A)|(k-cv(A))!$, we obtain the following result.

%\newtheorem{Tomi2}{Proposition}[section]
%\begin{Tomi2}
\begin{proposition}\label{pro:anti}
We have
\begin{equation}
I({\overline K}_k)=\sum_{A \subseteq K_k} (-1)^{|A|}{n-cv(A) \choose k-cv(A)} I(A)
\end{equation}
\end{proposition}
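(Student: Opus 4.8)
The plan is to derive Proposition~\ref{pro:anti} directly from the computation displayed immediately above it, simplifying the combinatorial constant using the structure of the stabilizer subgroups. Starting from Lemma~\ref{lem:anti} applied to $g = K_k$, the expansion of $I({\overline K}_k) = \sum_{\rho} \prod_i (1 - a_{\rho(\tau_i)})$ over the edges $\tau_i$ of $K_k$ yields
\begin{equation}
I({\overline K}_k)=\sum_{A \subseteq K_k}(-1)^{|A|}\frac{I(A)(K_k)\,|Stab(A)|}{|Stab(K_k)|}I(A),
\end{equation}
so the entire content of the proposition is the identity
\begin{equation}
\frac{I(A)(K_k)\,|Stab_{S_n}(A)|}{|Stab_{S_n}(K_k)|} = {n-cv(A) \choose k-cv(A)}.
\end{equation}

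First I would record the three elementary facts used: $I(A)(K_k) = k!/|Stab_{S_k}(A)|$, since the number of embeddings of the unlabeled graph $A$ as a subgraph of $K_k$ is the number of ways to place its labeled version, namely $k!$, divided by the internal symmetries $|Stab_{S_k}(A)|$; $|Stab_{S_n}(K_k)| = (n-k)!\,k!$, since a permutation fixing $K_k$ may permute the $k$ chosen vertices arbitrarily among themselves and the remaining $n-k$ vertices arbitrarily; and the factorization $|Stab_{S_n}(A)| = |Stab_{S_{cv(A)}}(A)|\,(n-cv(A))!$ together with $|Stab_{S_k}(A)| = |Stab_{S_{cv(A)}}(A)|\,(k-cv(A))!$. (The excerpt writes $|A|$ in the subscript; since $A$ ranges over simple graphs with no isolated vertices in this sum, $S_{|A|}$ there should read $S_{cv(A)}$, and I would make that correction explicit, or simply note that only the cardinality of the vertex support matters.) These factorizations hold because a permutation of the full vertex set that stabilizes $A$ splits as a permutation of the $cv(A)$ support vertices stabilizing $A$ and an unrestricted permutation of the other vertices, which act on no edge of $A$.

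Substituting, $\frac{I(A)(K_k)|Stab_{S_n}(A)|}{|Stab_{S_n}(K_k)|} = \frac{k!}{|Stab_{S_k}(A)|}\cdot\frac{|Stab_{S_{cv(A)}}(A)|(n-cv(A))!}{(n-k)!\,k!}$; the $k!$ cancels, and using $|Stab_{S_k}(A)| = |Stab_{S_{cv(A)}}(A)|(k-cv(A))!$ the symmetry factor $|Stab_{S_{cv(A)}}(A)|$ cancels as well, leaving $\frac{(n-cv(A))!}{(n-k)!\,(k-cv(A))!} = {n-cv(A) \choose k-cv(A)}$, which is exactly the claimed coefficient. This is essentially the chain of equalities already sketched in the two displayed equations preceding the proposition, so I would present it as a clean consolidation of that derivation.

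I do not expect a serious obstacle here; the statement is bookkeeping once Lemma~\ref{lem:anti} is in hand. The only point requiring genuine care is the stabilizer factorization, i.e.\ justifying that $|Stab_{S_m}(A)| = |Stab_{S_{cv(A)}}(A)|\,(m-cv(A))!$ for $m = n$ and $m = k$; this rests on the observation that the isolated vertices of an $m$-vertex host graph play no role in any automorphism of $A$ as an (unlabeled, isolated-vertex-free) subgraph, so they may be permuted freely, and the argument must be phrased so that it applies uniformly for every $m \ge k \ge cv(A)$. I would also remark that the sum is genuinely over unlabeled subgraphs of $K_k$ (including $A=\emptyset$, which contributes the term ${n \choose k}$), matching the statement of the proposition.
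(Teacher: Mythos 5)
Your proposal is correct and follows essentially the same route as the paper: apply Lemma~\ref{lem:anti} with $g=K_k$, substitute $I(A)(K_k)=k!/|Stab_{S_k}(A)|$ and $|Stab(K_k)|=(n-k)!\,k!$, and cancel via the stabilizer factorizations to obtain the binomial coefficient. Your observation that the paper's subscript $S_{|A|}$ should read $S_{cv(A)}$ (since $|A|$ denotes the edge count) is a correct and worthwhile clarification, but the argument is otherwise the same.
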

\begin{example}
$I({\overline K}_3)={n \choose 3}$-$(n-2)I(a_{12})+I(a_{12}a_{13})$$-I(a_{12}a_{13}a_{23})$.
\end{example}

\subsection{Ramsey Problem}
Let $f\in \mathbb{Z}^m$ be assigned according to
\begin{displaymath}
f_i=\left\{ \begin{array}{cc}
(-1)^{e(g_i)}{n-cv(g_i) \choose k-cv(g_i)} & \mathrm{if} \  cv(g_i)\leq k \ \mathrm{and} \ |g_i|\neq {n \choose 2} \\
1+(-1)^{e(g_i)}{n-cv(g_i) \choose k-cv(g_i)} & \mathrm{if} \ cv(g_i)=k \ \mathrm{and} \ |g_i|={n \choose 2}\\
0 & \ \mathrm{if} \ cv(g_i)>k. \\

\end{array}\right.
\end{displaymath}
Then the invariant vectors $z$ of the invariants of the $G$-poset $\E(r)$
satisfy
\begin{equation}
f^Tz=I(K_k)+I({\overline K_k})
\end{equation}
in the $G$-poset $\E(n)$ according to Proposition \ref{pro:anti}.

\begin{theorem}
An upper-bound for the Ramsey number $r(k)$ can be obtained with the help of a
medium size graph $G$-poset $\E(r)$, $k\leq r \leq n$, by finding the minimal $n$ s.t.
\begin{displaymath}
min \ f^Tz > 0,
\end{displaymath}
where $z\geq 0,z\in\mathbb{Z}$ and the minimization is carried out subject to the constraints
\begin{eqnarray}
-(E^{-1})^TDz \leq 0 \ \ \ (I)\\ \nonumber
z_iz_j-\sum_k c^k_{ij}z_k=0 \ \forall i,j\  \mathrm{s.t.}\ cv(g_i)+cv(g_j)\leq r \ \  (II)\\ \nonumber
\end{eqnarray}
where the coefficients $c_{ij}^k$ are calculated in the $G$-poset $\E(r)$,
\begin{equation}
D=diag\left( {n \choose r},{n-cv(g_1) \choose r-cv(g_1)},\ldots,{n-cv(g_m) \choose r-cv(g_m)}\right).
\end{equation}
and $E$ is the  $E$-transform of $\E(r)$.
\end{theorem}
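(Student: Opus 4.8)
The plan is to recognize the integer program as a \emph{relaxation} of the true extremal problem over graphs: Theorem~\ref{the:rajoitteet} guarantees that the invariant vector of any graph in $\E(n)$ lies in the feasible set of the program, while Proposition~\ref{pro:anti} shows that the linear functional $f^Tz$ evaluates, on such vectors, to exactly the Ramsey invariant $I(K_k)+I(\overline K_k)$. Minimizing $f^Tz$ over the (possibly larger) feasible set can therefore only \emph{underestimate} $\min_H\,(I(K_k)(H)+I(\overline K_k)(H))$, so strict positivity of the relaxed minimum forces every $n$-vertex graph to contain a clique or independent set of order $k$, giving $r(k)\le n$.

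First I would check that $f^Tz$ computes the Ramsey invariant. Since $k\le r$, the graph $K_k$ and all its unlabeled subgraphs belong to $\E(r)$; in particular $I(K_k)$ is itself one of the basic invariants $I(g_i)$ (the one with $g_i=K_k$), and the unlabeled subgraphs $A\subseteq K_k$ are precisely the $g_i$ with $cv(g_i)\le k$, while every $g_i$ with $cv(g_i)>k$ fails to embed in $K_k$ and so is irrelevant. Proposition~\ref{pro:anti} writes $I(\overline K_k)=\sum_{A\subseteq K_k}(-1)^{|A|}\binom{n-cv(A)}{k-cv(A)}I(A)$, whose constant term (from $A=\emptyset$) is $\binom{n}{k}$ and whose $K_k$-term has coefficient $(-1)^{e(K_k)}$. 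Comparing term by term with the three cases defining $f$ — the first case reproducing the generic coefficient $(-1)^{e(g_i)}\binom{n-cv(g_i)}{k-cv(g_i)}$, the ``$1+$'' in the second case absorbing the extra copy of $I(K_k)$ at the single index $g_i=K_k$, and the third case zeroing out the rest — yields $f^Tz=I(K_k)+I(\overline K_k)$ whenever $z=[1,I(g_1)(H),\ldots,I(g_m)(H)]$ is the invariant vector of a graph $H\in\E(n)$.

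Next I would invoke Theorem~\ref{the:rajoitteet}: for $n\ge r$, the vector $z(H)$ of any $H\in\E(n)$ satisfies $z\ge0$ integral, $(E^T)^{-1}Dz\ge0$ (which is exactly constraint $(I)$, as $(E^T)^{-1}=(E^{-1})^T$), and the product relations $(II)$ for all pairs with $cv(g_i)+cv(g_j)\le r$. Hence $\{z(H):H\in\E(n)\}$ is contained in the feasible set $P$ of the program, and
\[
\min_{H\in\E(n)}\bigl(I(K_k)(H)+I(\overline K_k)(H)\bigr)=\min_{H\in\E(n)}f^Tz(H)\ \ge\ \min_{z\in P}f^Tz,
\]
the right-hand minimum being attained because $P$ is a bounded polyhedron intersected with the integer lattice (as noted in Section~\ref{sec:loop}). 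Consequently, if $\min_{z\in P}f^Tz>0$, then $I(K_k)(H)+I(\overline K_k)(H)>0$ for every $H$ on $n$ vertices; being a count of subgraphs this is a non-negative integer, hence $\ge1$, so each such $H$ contains a clique or an independent set of order $k$. By the definition of the Ramsey number this means $r(k)\le n$, and taking the smallest such $n$ gives the asserted upper bound.

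The substantive steps, rather than genuine obstacles, are the coefficient bookkeeping in the second paragraph — verifying that the index set $\{g_i:cv(g_i)\le k\}$ matches the summation range $A\subseteq K_k$ of Proposition~\ref{pro:anti} and that the special middle case of $f$ fires exactly once — and the conceptual point that $P$ contains ``weakly graphic'' vectors with no underlying graph. The latter means $\min_{z\in P}f^Tz$ is only a lower bound for the true graph minimum, so the $n$ produced is an \emph{upper} bound for $r(k)$ that need not be tight, and the method delivers a bound only when such an $n$ exists; that it does for suitable $r$ is illustrated by the derivation of $r(3)=6$ from $\E(4)$ mentioned in the introduction.
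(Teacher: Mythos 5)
Your proposal is correct and is exactly the argument the paper intends: the theorem is presented as an immediate consequence of Theorem \ref{the:rajoitteet} (every graphic vector of $\E(n)$ lies in the feasible set, so the program is a relaxation) together with Proposition \ref{pro:anti} (so $f^Tz$ equals the Ramsey invariant on graphic vectors), and strict positivity of the relaxed minimum forces $r(k)\le n$. The paper leaves this derivation implicit after stating $f^Tz=I(K_k)+I(\overline{K_k})$; your write-up, including the bookkeeping for the three cases of $f$ and the observation that the relaxation only underestimates the true minimum, fills in precisely those steps.
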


We draw a couple of examples s.t. the $x$-axis is the value of the first graph
invariant $I(a_{12})$. The curves are calculated by finding the minimum by linear programming and sequential quadratic programming in Matlab.

\begin{figure}[!htk]
\begin{center}
\includegraphics[width=7cm,height=5cm]{./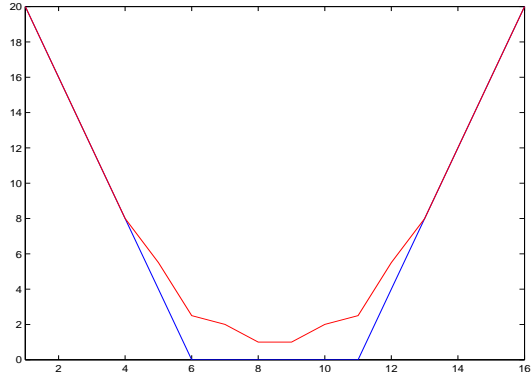}\label{fig:R3_4}
\end{center}
\vspace{-0.5cm}
\caption{Lower bounds for $I(K_3)+I(\overline{K_3})$ in $\E(6)$. These bound are calculated in $\E(4)$. The better bound is calculated subject to the constraints I and II. The worse is subject to the linear constraint I only. The nonlinear bound already shows that $r(3)\leq6$.}
\end{figure}
\begin{figure}[!htk]
\begin{center}
\includegraphics[width=7cm,height=5cm]{./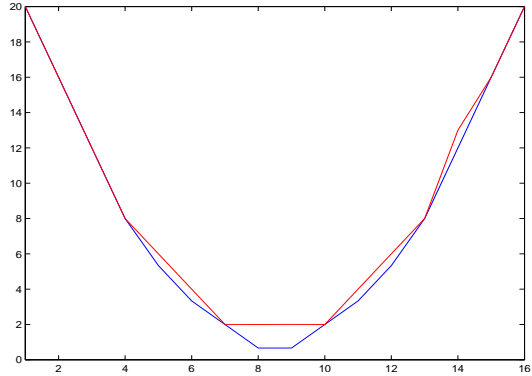}\label{fig:R3_5}
\end{center}
\vspace{-0.5cm}
\caption{Lower bounds for $I(K_3)+I(\overline{K_3})$ in $\E(6)$. These bound are calculated in $\E(5)$. The better bound is calculated subject to the constraints I and II. The worse is subject to the linear constraint I only. Now also the linear bound calculated in $\E(5)$ shows that $r(3)\leq6$. }
\end{figure}

We depict the lower bound for $I(K_4)+I(\overline{K_4})$ in $\E(5)$ in Figure \ref{fig:R4_5}.
\begin{figure}[!htk]
\begin{center}
\includegraphics[width=7cm,height=5cm]{./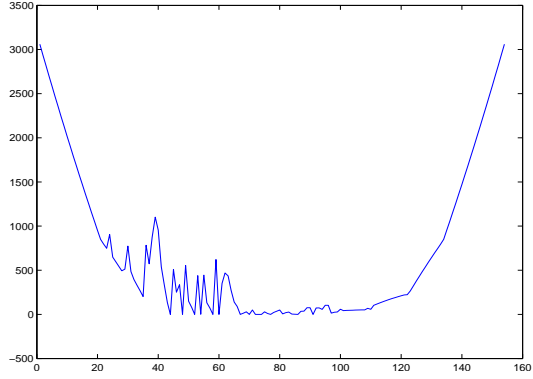}\label{fig:R4_5}
\end{center}
\vspace{-0.5cm}
\caption{The lower bound for $I(K_4)+I(\overline{K_4})$ in $\E(18)$. This bound is calculated in $\E(5)$ subject to the constraints I and II. The maximum number of iterations have been exceeded.}
\end{figure}
As the figure shows, numerical methods fail to confirm $r(4)=18$. To proceed we have to develop integer programming
techniques for restricting the solutions. 

\subsection{Upper Bounds Using Integer Programming}\label{sec:larger}
Let $G=(E^{-1})^TD$, where $E$ is the  $E$-transform of $\E(r)$ and
$a=f^TG^{-1}$. Notice that the condition $Gz \geq 0$ is implied by the
condition $x\geq0$ when $z=G^{-1}x$. The function $I(K_k)+I({\overline K_k})$
in the new coordinates is $f^Tz=a\cdot x$. It turns out that $a_i \geq
0$. This is because the orthogonal parameters $x$ are always $\geq 0$ since
$x_i$ is the number of $r$-subsets containing a graph isomorphic to $g_i$ and
clearly the Ramsey invariant must be satisfy $\forall i: a \cdot e_i \geq 0$, where
$e_i$ is the elementary unit vector. This helps to reduce the number of free
variables significantly. More precisely the number of non-zero components
$a_i$ equals the number of graphs $g \in \E(r)$
s.t. $I(K_k)(g)+I(\overline{K_k}) >0$.

Thus in order to find a zero of $a\cdot x$ we must have $x_i=0$ for all $i$ s.t. $a_i\neq 0$. Let $\Z$ be the set of indices s.t. $a_i \neq 0$. Let $G^*$ be the minor of $G$ containing all the rows of $G$ with indices in $\Z$. In the original coordinates $z$ the condition $x_i=0,i\in \Z$ amounts to
\begin{equation}
G^*z=0.
\end{equation}
This helps us to reduce the number of free variables significantly when most of
$\E(r)$ belongs to $\mathcal{Z}$ i.e. for moderate size $r$, $k < r < n$.

To handle the non-linear constraints we linearize the nonlinear constraints by assigning all the
possible values of the minimal set of graph invariants whose terms appear in
every nonlinear term. In $\E(r)$ we must loop over all the possible values of
invariants $I(g_i)$ s.t. $cv(g_i)\leq \lfloor r/2 \rfloor$ because if the
product $I(g_i)I(g_j)$ satisfies $cv(g_i)+cv(g_j)\leq r$ then the other one,
say $cv(g_i)$, must be smaller or equalt to $\lfloor r/2 \rfloor$. Since at the minimum of the Ramsey-invariant
$z_1=\lfloor{n \choose 2}/2 \rfloor$ and $z_0=1$ the number of free variables
is smaller. We will call the variables $z_i$ s.t. $cv(z_i)\leq \lfloor r/2 \rfloor$ \emph{the assigned variables}.

Let $G=(E^{-1})^TD$, where $E$ is the $E$-transform of $\E(r)$. Let $a=f^TD^{-1}E^T$ and $\Z$ the set of indices s.t. $a_i \neq 0$. Let $G^*$ be the minor of $G$ containing all the rows of $G$ with indices in $\Z$. Let 
\begin{equation}
K=\left [ \begin{array}{c}
G^* \\
P\end{array} \right],
\end{equation}
where $P$ is the matrix of the linearized nonlinear constraints i.e. it contains
rows giving the dependencies $z_iz_j-\sum_k c_{ij}^k z_k=0$, where at least
one of the $z_i$ or $z_j$ is assigned. Explicitly if the $z_i$ is the assigned
variable but $z_j$ is not, then add the value of $z_i$ in the column
corresponding to $z_j$ on the same row. Let $p$ be a column vector of the same height as $K$, initially set to zero. If both variables are assigned then put the product $z_iz_j$ in the $k^{\mathrm{th}}$ element of $p$, where $k$ is the index of the row corresponding to the product $z_iz_j$ in $K$.

Rearrange the columns of $K$ so that the assigned variables are in columns
from $0$ to $m-1$. Let $\pi$ denote this permutation matrix acting from the right.

The constraint $Kz=0$ becomes $H^*z^*+h^*=0$, where $H^*$ is the minor of $K$ not containing the columns corresponding to the assigned variables and $h^*=p+\sum_{i=0}^{m-1} K_iz_i$, where $K_i$ denote the $i^{\mathrm{th}}$ column of the matrix $K$.

Once we find a solution $z^0$ to $H^*z^0+h^*=0$, all the solutions of $Kz=0$ are given by $z^*=[z^a;z^0+ker(H^*)]$, where $z^a$ is the vector of assigned variables and $ker(H^*)$ is the kernel of the matrix $H^*$. We denote by $V$ the $\mathbb{Z}$-basis of the kernel i.e. $ker(H^*)=Vx,x\in \mathbb{Z}^r$ for some $r$. See \cite{Cohen} how to calculate it.

Let $G^l$ be the remaining rows of the matrix $G$ after the rows in $G^*$ have been erased and the columns have been rearranged. Let $H^l$ be the minor of $G^l$ not containing the columns corresponding to the assigned variables and $h^l=\sum_{i=0}^{m-1} G_i^lz_i$, where $G_i^l$ is the $i^{\mathrm{th}}$ column of the matrix $G^l$.
\begin{proposition}
All the $r$-graphic solutions of $Gz\geq 0$ s.t. $f^Tz=0$ and $z\geq 0$ are given by the inequality
\begin{equation}
\left [ \begin{array}{c}
I \\
H^l\end{array} \right]\left( z^0+Vc  \right)+\left [ \begin{array}{c}
0 \\
h^l\end{array} \right] \geq 0
\end{equation}
in the new coordinates $c$. The solution $z$ in the original coordinates is $z=\pi[z^a;z^0+Vc]$, where $z^a$ is the vector of assigned variables and $\pi$ is the permutation used to rearrange the columns of the problem.
\end{proposition}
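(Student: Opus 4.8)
The plan is to rewrite each defining property of an $r$-graphic zero of the Ramsey invariant as a constraint on the new parameter $c$, and then to check that this rewriting is reversible. Recall that $z$ being $r$-graphic means, by Theorem~\ref{the:rajoitteet}, that $z$ is a non-negative integer vector with $Gz \geq 0$ (this is $(E^{-1})^TDz \geq 0$) satisfying the product constraints $z_iz_j - \sum_k c_{ij}^k z_k = 0$ whenever $cv(g_i)+cv(g_j) \leq r$; to this we append $f^Tz = 0$.

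First I would dispose of $f^Tz = 0$. Putting $x = Gz$, the constraint $Gz \geq 0$ reads $x \geq 0$, and since $G^{-1} = D^{-1}E^T$ we have $f^Tz = f^TG^{-1}x = a \cdot x$ with $a = f^TD^{-1}E^T$. Combinatorially $x_i$ counts the $r$-vertex subsets of isomorphism type $g_i$ and $a_i$ equals the value of the Ramsey invariant on the graph $g_i$, so, as recorded above, $a \geq 0$ and $x \geq 0$. Hence $f^Tz = \sum_i a_i x_i = 0$ if and only if $x_i = 0$ for every index $i$ with $a_i \neq 0$, i.e. if and only if $G^*z = 0$. In particular, on the rows indexed by $\Z$ the inequality $Gz \geq 0$ automatically becomes an equality, so once $G^*z = 0$ is imposed the only surviving part of $Gz \geq 0$ is $G^l z \geq 0$.

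Next I would linearize the product constraints. The key point is that $cv(g_i)+cv(g_j) \leq r$ forces $\min(cv(g_i),cv(g_j)) \leq \lfloor r/2 \rfloor$, so once the assigned variables --- those $z_i$ with $cv(g_i) \leq \lfloor r/2 \rfloor$, including $z_0 = 1$ and, at the minimum, $z_1 = \lfloor {n \choose 2}/2 \rfloor$ --- have been fixed, every constraint $z_iz_j - \sum_k c_{ij}^k z_k = 0$ is affine-linear in the remaining variables. These linearized rows are exactly the rows of $P$, a genuinely constant contribution arising only when both $z_i$ and $z_j$ are assigned, which is why such products are collected into the vector $p$. Stacking $P$ on top of $G^*$ gives the affine system $Kz = 0$, and moving the assigned columns to the right yields $H^*z^* + h^* = 0$ with $h^* = p + \sum_{i<m} K_i z_i$. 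Choosing any particular integer solution $z^0$ and an integer basis $V$ of $\ker H^*$ (Hermite or Smith normal form, see \cite{Cohen}), the full integer solution set of the equality constraints is $z = \pi[z^a; z^0 + Vc]$, $c \in \mathbb{Z}$, where $z^a$ is the fixed vector of assigned values.

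Finally I would translate the remaining inequalities into $c$. Since $z^a \geq 0$ is a fixed admissible choice, $z \geq 0$ reduces to $z^0 + Vc \geq 0$, which is the top block $I(z^0+Vc) \geq 0$. Splitting $G^l$ along the same column rearrangement into its assigned part, contributing $h^l = \sum_{i<m} G_i^l z_i$, and its remaining part $H^l$, the constraint $G^l z \geq 0$ becomes $H^l(z^0+Vc) + h^l \geq 0$, the bottom block; stacking the two gives the asserted inequality. For the converse, any $c$ satisfying it yields $z = \pi[z^a; z^0 + Vc]$ obeying $Kz = 0$ --- hence both the product constraints and $G^*z = 0$, hence $f^Tz = a \cdot x = 0$ because $x$ vanishes on $\Z$ --- together with $Gz \geq 0$ (zero on $\Z$, and $G^l z = H^l(z^0+Vc)+h^l \geq 0$ elsewhere) and $z \geq 0$, so $z$ is an $r$-graphic zero of the Ramsey invariant. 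The bulk of the work is this bookkeeping under the permutation $\pi$; the only steps that need genuine care are verifying that every product appearing in constraint~(II) involves at least one assigned variable, so that the linearization is legitimate, and that $V$ parametrizes all integer --- not merely rational --- solutions of $H^*z^* = -h^*$.
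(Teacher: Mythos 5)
Your argument is correct and follows exactly the construction the paper sets up in the surrounding text; the paper in fact states this proposition without any explicit proof, so your write-up simply makes the implicit justification precise (the reduction $f^Tz=0\Leftrightarrow G^*z=0$ via $a\geq 0$, $x\geq 0$; the observation that $cv(g_i)+cv(g_j)\leq r$ forces one factor to be assigned, so the product constraints linearize; and the affine parametrization $z^0+Vc$ over the integer kernel). Nothing in your reasoning deviates from or adds to what the paper intends, and the two points you flag as needing care are indeed the only ones worth checking.
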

To restrict the loops over assigned variables we use the lower and upper bounds described in Section \ref{sec:loop} which are compactly denoted here by $L_j \leq z_j \leq U_j$.

Now we are ready to describe the algorithm for computing the possible upper bound for the Ramsey numbers implied by the $G$-poset $\E(r)$. The algorithm prints all the zeros of the Ramsey invariant which are $r$-graphic.

The input matrices $G^l$ and $G^*$ have been rearranged with the permutation
$\pi$. The algorithm uses subroutines MLLL and Inverse\_Image in \cite{Cohen}.
\begin{table}[!htk]
\fbox{\hspace{0.2cm}\begin{minipage}[h]{13cm}
\begin{itemize}
\item[] \textsc{r-graphic zeros of the Ramsey Invariant}
\item[Input:] $n$,$G^l$,$G^*$,$L_k$,$U_k$, $\pi$
\item[1] Set $z_0=1$ and $z_1=\lfloor{n \choose 2}/2 \rfloor$ 
\item[2] Loop through all the remaining assigned variables s.t. $L_j \leq z_j \leq U_j$ for $j=2\ldots m-1$.
\subitem 3 Calculate $P$ and $p$, set $K=\left [ \begin{array}{c}G^* \\P\end{array} \right]$.
\subitem 4 Calculate $H^l$ and $H^*$.
\subitem 5 Calculate the $\mathbb{Z}$-basis $V$ of $ker(H^*)$ by MLLL-algorithm.
\subitem 6 Calculate $p$, $h^*=p+\sum_{i=0}^{m-1} K_iz_i$ and $h^l=\sum_{i=0}^{m-1} G_i^lz_i$.
\subitem 7 Calculate $z^0=$Inverse\_Image$(H^*,h^*)$ i.e. find $z^0$ s.t. $H^*z^0+h^*=0$.
\subitem 8 Set $h=\left [ \begin{array}{c}I \\H^l\end{array} \right]z^0+\left [ \begin{array}{c}0 \\h^l\end{array} \right]$ and $B=\left [ \begin{array}{c}V \\H^lV \end{array} \right]$.
\subitem 9 Find all solutions to $Bc+h\geq 0$. If solutions exits, print(``Solutions found $z=\pi[z^a;z^0+Vc]$.'') for all solutions $c$.
\item[10] If there were no solutions, print(``$r(k)\leq n$'');
\end{itemize}
\end{minipage}\hspace{0.2cm}}
\end{table}

This algorithm essentially transforms the Ramsey-problem into several integer
polyhedron problems. There is a huge literature of papers discussing how to
find the integer points in the polyhedron. There is for instance a polynomial
time algorithm for counting the number of integer points in polyhedra when the
dimension is fixed due to A. I. Barvinok \cite{int1}. The number of free
variables in the problem is roughly equal to the size of the set
\begin{equation}
\{g \in \E(r)| I(K_k)(g)+I(\overline{K_k})(g)=0\}.
\end{equation}

We found that $\E(5)$ is too weak for finding an upper bound for $r(4)$. There
are interior points in the polyhedron given by the constraints.
One solution with $n=18$ and $z_1=76$ is
\begin{eqnarray}
z^T=[1,76,0,2850,144943,40,49161,2559,23294,82,15,121162,\\ \nonumber
41864,9033,104781,107484,77509,3789,1,89219,237949,324866,\\ \nonumber
16203,27998,78056,0,540733,95195,0,3,0,70440,0,0].
\end{eqnarray}
This vector is not graphic since we know that $r(4)=18$ but it is $5$-graphic.
There are $156$ variables/graphs in $\E(6)$ making this approach
unattractive. There are, however, invariants which are sufficient for cliques
and do not grow exponentially on $n$.  

\subsection{Clique-Theoretic Newton Relations}\label{sec:newton}
We notice that the coefficients of basic graph invariants in Proposition
\ref{pro:anti} depend only on the parameters $e=|A|$ and $v=cv(A)$ and thus we
may write the result as follows.
\begin{proposition}\label{the:T3}
We have
\begin{equation}
I({\overline K}_k)={n \choose k}+\sum_{v=2}^{k} \sum_{e=\lceil v/2 \rceil}^{{v \choose 2}} (-1)^e {n-v \choose k-v}\sigma_e^v,
\end{equation}
where 
\begin{equation}
\sigma_e^v({\mathcal G})=\sum_{A \subseteq {\mathcal G},cv(A)=v,|A|=e} I(A)({\mathcal G}).
\end{equation}
Secondly 
\begin{equation}
I(K_k)=\sigma_{{k \choose 2}}^k.
\end{equation}
\end{proposition}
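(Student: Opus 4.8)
The plan is to derive this directly from Proposition \ref{pro:anti} by regrouping its sum over unlabeled subgraphs of $K_k$ according to the pair of parameters $v=cv(A)$ and $e=|A|$, using that the coefficient $(-1)^{|A|}{n-cv(A) \choose k-cv(A)}$ depends on $A$ only through $v$ and $e$.

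First I would split off the term $A=\emptyset$, which has $v=0$, $e=0$ and $I(\emptyset)=1$, hence contributes exactly ${n \choose k}$; this is the leading summand. For the remaining terms I would collect those with a fixed pair $(v,e)$: writing $\mathcal{A}_{v,e}$ for the set of isomorphism classes of graphs on $v$ connected vertices with $e$ edges, and noting that every such graph with $v\le k$ embeds in $K_k$, the slice $\mathcal{A}_{v,e}$ is precisely the relevant part of the index set of Proposition \ref{pro:anti}. Pulling the common factor $(-1)^e{n-v \choose k-v}$ out of each slice leaves $\sum_{A\in\mathcal{A}_{v,e}}I(A)=\sigma_e^v$, where the identification with $\sigma_e^v(\mathcal{G})$ is legitimate because $I(A)(\mathcal{G})=0$ whenever $A$ is not a subgraph of $\mathcal{G}$, so enlarging the index set from "$A\subseteq\mathcal{G}$" to "all $A$ with $cv(A)=v,|A|=e$" changes nothing. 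This already produces the double sum $\sum_{v,e}(-1)^e{n-v \choose k-v}\sigma_e^v$.

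It then remains to pin down the summation ranges. No graph has exactly one vertex of positive degree, so $v\ge 2$, and $A\subseteq K_k$ forces $v\le k$; for fixed $v$, a graph on $v$ vertices with all degrees at least $1$ has at least $\lceil v/2\rceil$ edges (a (near-)perfect matching is extremal) and at most ${v \choose 2}$ edges ($K_v$), so $\lceil v/2\rceil\le e\le {v \choose 2}$, and for $e$ outside this window $\mathcal{A}_{v,e}=\emptyset$ and $\sigma_e^v\equiv 0$, so including or excluding such indices is harmless. This yields the first formula. For the second, the only graph with $cv(A)=k$ and $|A|={k \choose 2}$ is $K_k$ itself, whence $\sigma_{{k \choose 2}}^k=I(K_k)$ straight from the definition of $\sigma_e^v$. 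The whole argument is bookkeeping; the one place that needs a moment's care is the identification of $\mathcal{A}_{v,e}$ with the full set of graphs of those parameters — so that the inner sum is genuinely $\sigma_e^v$ rather than a sub-sum — together with the extremal edge-count bounds $\lceil v/2\rceil$ and ${v \choose 2}$ that fix the inner summation range.
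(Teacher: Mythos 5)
Your proposal is correct and is essentially the paper's own argument: the paper proves Proposition \ref{the:T3} by the single observation that the coefficient $(-1)^{|A|}\binom{n-cv(A)}{k-cv(A)}$ in Proposition \ref{pro:anti} depends on $A$ only through $v=cv(A)$ and $e=|A|$, and then regroups the sum accordingly. Your write-up just makes explicit the bookkeeping (the empty-graph term, the vanishing of $I(A)(\mathcal{G})$ for non-subgraphs, and the edge-count range $\lceil v/2\rceil\le e\le\binom{v}{2}$) that the paper leaves implicit.
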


The parameters $\sigma_e^v$ have a similar relationship with the power sums 
\begin{equation}
h_e^v(\mathcal{G}) := \sum_{A \subseteq \mathcal{G},v(A)=v} |A|^e 
\end{equation}
as the classical elementary symmetric polynomials $\sigma_e$ have with the power sums $h_e$. These relations are linear when the variables $a_{ij}\in \{0,1\}$.
\begin{theorem}\label{the:T4}
When $a_{ij}\in \{0,1\}$ the parameters $\sigma_e^v$ and $h_e^v$ are in linear correspondence
\begin{equation}
\sigma_e^v=\sum_{w=\omega(e)}^{v} \sum_{f=1}^{e} (-1)^{e-f+v-w}\frac{\sigma_{e-f}([e-1]) \Pi_{j=1}^{v-w} n-v+j}{e!(v-w)!}h_f^w,
\end{equation}
where $\omega(x)=\lceil(1/2+\sqrt{1+8x}/2\rceil$ and
$\sigma_a([b])=\sigma_a(1,2,\ldots,b)$, where $\sigma_a$ is the classical
elementary symmetric polynomial.
\end{theorem}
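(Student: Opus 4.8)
The plan is to reduce the statement to the classical Newton identities relating the elementary symmetric polynomials $\sigma_a$ and the power sums $h_a$ in a finite set of variables, applied to the multiset of edge-counts of the relevant subgraphs. Fix $v$ and consider all the (labeled) vertex-subsets of cardinality $v$ in the host graph $\mathcal{G}$; to each such subset $W$ associate the number $m_W = |E(\mathcal{G}[W])|$ of induced edges, a value in $\{0,1,\ldots,\binom{v}{2}\}$. When $a_{ij}\in\{0,1\}$ the quantity $\sum_{A\subseteq\mathcal G,\ cv(A)=v,\ |A|=e}I(A)(\mathcal G)$ counts, with multiplicity, the $e$-subsets of edges lying inside some $v$-set, which (after the standard bookkeeping of Lemma \ref{lem:suht}-type overcounting by the $\binom{n-v}{k-v}$ factors, here appearing as the product $\Pi_{j=1}^{v-w}(n-v+j)$ together with the $(v-w)!$ in the denominator) is expressible through $\sigma_e$ evaluated at the multiset $\{m_W\}_W$ — and $h_e^w$ is literally the power sum of the same data restricted to $w$ connected vertices. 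So the first step is to make this dictionary precise: identify $\sigma_e^v$ and $h_e^v$ with (scalar multiples of) $\sigma_e$ and $h_e$ of an explicit finite family of nonnegative integers.

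The second step is to invoke the Newton identities in their explicit (non-recursive) form: $\sigma_e = \sum_{f=1}^{e}\frac{(-1)^{e-f}}{e!}\,\sigma_{e-f}([e-1])\,h_f$ up to the usual combinatorial constant — this is where the factor $\sigma_{e-f}([e-1])/e!$ in the statement comes from, $\sigma_{e-f}([e-1])$ being the unsigned Stirling-type coefficient that expresses $e\cdot\sigma_e$ as a polynomial in the power sums. One substitutes this into the dictionary of Step 1. The extra summation over $w$ from $\omega(e)$ to $v$ and the alternating sign $(-1)^{v-w}$ together with the telescoping product $\Pi_{j=1}^{v-w}(n-v+j)$ arise because passing from "edges inside a $v$-set" to "edges spanning exactly $w$ connected vertices" requires an inclusion–exclusion over the number of genuinely used vertices; the lower limit $\omega(e)=\lceil 1/2+\sqrt{1+8e}/2\rceil$ is simply the least $w$ with $\binom{w}{2}\ge e$, i.e. the fewest vertices that $e$ edges can touch.

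The third step is purely bookkeeping: collect the two nested inclusion–exclusions (Newton over $f$, vertex-count over $w$) and check that the constants multiply to $(-1)^{e-f+v-w}\sigma_{e-f}([e-1])\Pi_{j=1}^{v-w}(n-v+j)\big/\big(e!(v-w)!\big)$, matching the claimed formula. I expect the main obstacle to be precisely this constant-tracking: one must verify that the overcount of an $e$-edge configuration on a fixed $w$-vertex set, when embedded into a $v$-set and then into the full $n$-vertex graph, contributes exactly $\Pi_{j=1}^{v-w}(n-v+j)/(v-w)!=\binom{n-w}{v-w}$ in a way that is independent of which configuration we picked — this independence (which uses $a_{ij}\in\{0,1\}$ crucially, since otherwise $I(A)(\mathcal G)$ would not simply count edge-subsets) is what makes the relation linear rather than merely polynomial, and it is the one place where a careful argument, rather than a routine computation, is needed. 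Everything else follows by assembling Lemma \ref{lem:suht}, Proposition \ref{the:T3}, and the classical Newton identities.
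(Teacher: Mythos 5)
Your overall architecture --- an inclusion--exclusion over the number $w$ of vertices actually touched by the edges, composed with a Stirling-type expansion that produces the factor $\sigma_{e-f}([e-1])/e!$ --- matches the paper's proof, and you correctly identify $\omega(e)$ and the role of the hypothesis $a_{ij}\in\{0,1\}$. But Steps 1--2 contain an error that would sink a literal execution. The identity you invoke, $e!\,\sigma_e=\sum_{f}(-1)^{e-f}\sigma_{e-f}([e-1])\,h_f$, is \emph{not} a Newton identity and is false for the elementary symmetric polynomials and power sums of a common multiset: Newton's relations are nonlinear (e.g.\ $2\sigma_2=h_1^2-h_2$), which is precisely why your proposed dictionary identifying $\sigma_e^v$ with $\sigma_e$ of the multiset $\{m_W\}_W$ cannot yield the \emph{linear} correspondence the theorem asserts. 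The object that actually satisfies your displayed formula is not an elementary symmetric function of $\{m_W\}$ but the binomial moment $b_e^v=\sum_{|W|=v}\binom{m_W}{e}$, and the identity behind it is the one-variable falling-factorial expansion $\binom{x}{e}=\sum_{f=1}^e(-1)^{e-f}\frac{\sigma_{e-f}([e-1])}{e!}\,x^f$ applied to $x=m_W$ and summed over $W$; summing a polynomial identity in $m_W$ over all $v$-sets is exactly what makes the result linear in the power sums $h_f^v=\sum_W m_W^f$. This intermediate object is missing from your proposal, and without it the two inclusion--exclusions do not compose: the $w$-sum relates $\sigma_e^v$ to $b_e^w$ (via $b_e^v=\sum_{w}\binom{n-w}{v-w}\sigma_e^w$, since an $e$-edge set touching exactly $w$ vertices lies inside $\binom{n-w}{v-w}$ of the $v$-sets), not to the $h_f^w$ directly.

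This is exactly how the paper proceeds: it introduces $b_e^v$ as a mediator, inverts the triangular system $b_e^v=\sum_{w}\binom{n-w}{v-w}\sigma_e^w$ to obtain $\sigma_e^v=\sum_{w}(-1)^{v-w}\binom{n-w}{v-w}b_e^w$ (the paper does this by solving a recursion for the coefficients $d_i^v$; a direct verification that $\sum_{u}(-1)^{v-u}\binom{n-u}{v-u}\binom{n-w}{u-w}=\delta_{vw}$ also works, using $\binom{N}{i}\binom{N-i}{k-i}=\binom{N}{k}\binom{k}{i}$), and then expands each $b_e^w$ in the $h_f^w$ by the falling-factorial identity. Once you replace ``elementary symmetric polynomial of $\{m_W\}$'' and ``Newton identity'' by ``binomial moment'' and ``expansion of $\binom{x}{e}$ in powers of $x$'', your Step 3 bookkeeping does go through and the constants assemble as claimed; as written, however, the central identity you rely on is false in the generality in which you state it.
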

\begin{proof}
We use binomial sums $b_e^v({\mathcal G})=\sum_{v(A)=v,A\subseteq{\mathcal G}} {|A| \choose e}$ as mediators. The connection to $\sigma_e^v$ is given by
\begin{equation}\label{eq:eq7}
\sigma_e^v=b_e^v-\sum_{w=\omega(e)}^{v-1} {n-w \choose v-w} \sigma_e^w.
\end{equation}
This result follows from splitting the binomial sum in parts s.t. the number of vertices connected to the edges is $w$. For each such subgraph the remaining vertices can be selected in ${n-w \choose v-w}$ different ways. The sum is over all possible values of $(e,w)$ s.t. there exists a graph with those parameters.
\begin{lemma}\label{the:T5}
We can use the equation (\ref{eq:eq7}) to solve
\begin{equation}
\sigma_e^v= \sum_{w=\omega(e)}^{v} d_{v-w}^{v}b_e^w,
\end{equation}
where
\begin{equation}\label{lala}
d_i^v=\frac{(-1)^i}{i!} \prod_{j=1}^{i}(n-v+j).
\end{equation}
\end{lemma}
\begin{proof}
First notice $\sigma_e^{\omega(e)}=b_e^{\omega(e)}$ by (\ref{eq:eq7}) which means that
$d_0^{\omega(e)}=1$. Also $d_0^v=1$ according to (\ref{eq:eq7}). Write then
\begin{equation}
\sigma_e^{\omega(e)+k}= d_0^{\omega(e)+k} b_e^{\omega(e)+k} + d_1^{\omega(e)+k} b_e^{\omega(e)+k-1} + \cdots + d_{k}^{\omega(e)+k}b_e^{\omega(e)}
\end{equation}
and compare the coefficients with the expanded (\ref{eq:eq7}):
\begin{eqnarray}
\sigma_e^{\omega(e)+k}&=&b_e^{\omega(e)+k} -(n-\omega(e)-k+1)\sigma_e^{\omega(e)+k-1} \\ \nonumber
&-&{n-\omega(e)-k+2 \choose 2}
\sigma_e^{\omega(e)+k-2}- \cdots -{n-\omega(e) \choose k}\sigma_e^{\omega(e)}.
\end{eqnarray}
The coefficients $d_i^v$ satisfy the recursion
\begin{equation}
d_i^v=-\sum_{j=1}^i {n-v+j \choose j}d_{i-j}^{v-j}.
\end{equation}
The solution to this recursion is (\ref{lala}) with the initial values
$d_0^v=1$, which can be seen by the substitution
\begin{eqnarray}
-\sum_{j=1}^i {n-v+j \choose j}\frac{(-1)^{i-j}}{(i-j)!}
 \prod_{k=1}^{i-j}(n-v+j+k) \\ \nonumber
=\sum_{j=1}^i (-1)^{i-j+1} \frac{(n-v+j)!}{(n-v)!j! (i-j)!} \prod_{k=1}^{i-j}
 (n-v+j+k) \\ \nonumber
=\sum_{j=1}^i (-1)^{i-j+1}\frac{(n-v+i)!}{(n-v)!j! (i-j)!} \\ \nonumber
=\prod_{k=1}^i (n-v+k) \sum_{j=1}^i(-1)^{i-j+1} \frac{(n-v)!}{(n-v)!j! (i-j)!}
 \\ \nonumber
=\prod_{k=1}^i (n-v+k) \sum_{j=1}^i(-1)^{i-j+1} \frac{1}{j! (i-j)!} \\  \nonumber
= \prod_{k=1}^i (n-v+k) \sum_{j=1}^i(-1)^{i-j+1} {i \choose j}/i! \\ \nonumber
= \frac{(-1)^i}{i!} \prod_{j=1}^{i}(n-v+j),
\end{eqnarray}
where we used  $(-1)^i=\sum_{j=1}^i(-1)^{i-j+1} {i \choose j}$ which follows
by similar reasoning to Lemma \ref{the:apu}.
\end{proof}

Next we express the $b_e^v$ by using power sums. 

\begin{lemma}\label{the:T6}
\begin{equation}
b_e^v=\sum_{f=1}^e (-1)^{e-f} \frac{\sigma_{e-f}([e-1])}{e!}h_f^v.
\end{equation}
\end{lemma}
\begin{proof}
Since 
\begin{equation}
{x \choose e}=\sum_{f=1}^e (-1)^{e-f} \frac{\sigma_{e-f}([e-1])}{e!}x^f,
\end{equation}
we may substitute this in
\begin{eqnarray}
b_e^v=\sum_{V(A)=v}{|A| \choose e}=\sum_{V(A)=v} (-1)^{e-f} \frac{\sigma_{e-f}([e-1])}{e!}|A|^f \\ \nonumber
=\sum_{f=1}^e (-1)^{e-f} \frac{\sigma_{e-f}([e-1])}{e!}h_f^v.
\end{eqnarray}
and obtain the result.
\end{proof}

We remark that $\sigma_a([b])$ can be computed recursively
\begin{equation}
\sigma_a([b])=b\sigma_{a-1}([b-1])+\sigma_a([b-1]),
\end{equation}
where $\forall b:\sigma_0([b])=1$ and $\forall a>b:\sigma_a([b])=0$.

Finally we combine Lemmas \ref{the:T5} and \ref{the:T6} to obtain the result.
\end{proof}

\begin{example}
\begin{eqnarray}
I(\overline{K_3})= & {n\choose 3}-\left (n-2\right )h_{{1,2}}-5/6\,h_{{1,3}}+h_{{2,3}}-1/6\,h_{{3,3}} \\ \nonumber
I(\overline{K_4})= & {n\choose 4}-{n-2\choose 2}h_{{1,2}}-{\frac {29}{20}}\,h_{{1,4}}+{\frac {203}{90}}\,h_{{2,4}}-{\frac {49}{48}}\,h_{{3,4}} \\ \nonumber
 & +{\frac {35}{144}}\,h_{{4,4}}-{\frac {7}{240}}\,h_{{5,4}}+{\frac {1}{720}}\,h_{{6,4}} \\ \nonumber
I(\overline{K_5})= & {\frac {1}{120}}\,{n}^{5}-1/12\,{n}^{4}-1/6\,{n}^{3}h_{{1,2}}+{\frac {7}{24}}\,{n}^{3}+3/2\,{n}^{2}h_{{1,2}}\\ \nonumber
 & +1/4\,{n}^{2}h_{{1,3}}-1/4\,{n}^{2}h_{{2,3}}-{\frac {5}{12}}\,{n}^{2}-13/3\,nh_{{1,2}}-7/4\,nh_{{1,3}} \\ \nonumber
 & +7/4\,nh_{{2,3}}-1/2\,nh_{{1,4}}+1/2\,nh_{{2,4}}+1/5\,n+4\,h_{{1,2}}+3\,h_{{1,3}}-3\,h_{{2,3}}+2\,h_{{1,4}} \\ \nonumber
 & -2\,h_{{2,4}}-{\frac {3601}{2520}}\,h_{{1,5}}+{\frac {151933}{50400}}\,h_{{2,5}}-{\frac {84095}{36288}}\,h_{{3,5}} \\ \nonumber
 & +{\frac {341693}{362880}}\,h_{{4,5}}-{\frac {8591}{34560}}\,h_{{5,5}}+{\frac {7513}{172800}}\,h_{{6,5}} \\ \nonumber
 & -{\frac {121}{24192}}\,h_{{7,5}}+{\frac {11}{30240}}\,h_{{8,5}}-{\frac {11}{725760}}\,h_{{9,5}}+{\frac {1}{3628800}}\,h_{{10,5}}
\end{eqnarray}
\end{example}

\subsection{Syzygies for Symmetric Polynomials}
Do the parameters $\sigma_e^v$ satisfy some algebraic relations? This is non-trivial since the product of $\sigma_{e_1}^{v_1}\sigma_{e_2}^{v_2}$ is not closed in the set of symmetric polynomials $\sigma_e^v$. However by computer search in $\E(8)$ we were able to find the following dependencies. These are actually the only ones in $\E(8)$ and smaller except for the equivalent dependencies were the leading monomials are the same but the remaining monomials vary.
\begin{theorem}
The parameters $\sigma_e^v$ satisfy at least the following general syzygies:
\begin{tiny}
\begin{eqnarray}
-\sigma_1^2\sigma_1^2\sigma_1^2\sigma_1^2+2\sigma_2^4\sigma_1^2\sigma_1^2+2\sigma_2^3\sigma_1^2\sigma_1^2+\sigma_1^2\sigma_1^2\sigma_1^2=0\\
-\sigma_6^4\sigma_1^2\sigma_1^2+2\sigma_6^4\sigma_2^4+2\sigma_6^4\sigma_2^3+\sigma_6^4\sigma_1^2=0\\
-\sigma_5^4\sigma_1^2\sigma_1^2+2\sigma_5^4\sigma_2^4+2\sigma_5^4\sigma_2^3+\sigma_5^4\sigma_1^2=0\\
-\sigma_4^4\sigma_1^2\sigma_1^2+2\sigma_4^4\sigma_2^4+2\sigma_4^4\sigma_2^3+\sigma_4^4\sigma_1^2=0\\
-\sigma_3^4\sigma_1^2\sigma_1^2-1/3\sigma_2^4\sigma_1^2\sigma_1^2-\sigma_3^3\sigma_1^2\sigma_1^2-1/3\sigma_2^3\sigma_1^2\sigma_1^2+2\sigma_3^4\sigma_2^4+4/3\sigma_2^4\sigma_2^4\\ \nonumber
+2\sigma_2^4\sigma_3^3+2\sigma_3^4\sigma_2^3+8/3\sigma_2^4\sigma_2^3+2\sigma_3^3\sigma_2^3+4/3\sigma_2^3\sigma_2^3-\sigma_3^6\sigma_1^2-\sigma_3^5\sigma_1^2=0\\
-\sigma_2^4\sigma_1^2\sigma_1^2-3\sigma_3^3\sigma_1^2\sigma_1^2-\sigma_2^3\sigma_1^2\sigma_1^2+4\sigma_2^4\sigma_2^4+6\sigma_2^4\sigma_3^3+8\sigma_2^4\sigma_2^3\\ \nonumber
+6\sigma_3^3\sigma_2^3+4\sigma_2^3\sigma_2^3-3\sigma_3^6\sigma_1^2-3\sigma_3^5\sigma_1^2-3\sigma_3^4\sigma_1^2=0\\
-\sigma_3^3\sigma_1^2\sigma_1^2-1/3\sigma_2^3\sigma_1^2\sigma_1^2+2/3\sigma_2^4\sigma_2^4+2\sigma_2^4\sigma_3^3+2\sigma_2^4\sigma_2^3+2\sigma_3^3\sigma_2^3\\ \nonumber
+4/3\sigma_2^3\sigma_2^3-\sigma_3^6\sigma_1^2-\sigma_3^5\sigma_1^2-\sigma_3^4\sigma_1^2-1/3\sigma_2^4\sigma_1^2=0\\
-\sigma_2^3\sigma_1^2\sigma_1^2+2\sigma_2^4\sigma_2^4+6\sigma_2^4\sigma_2^3+4\sigma_2^3\sigma_2^3-3\sigma_3^6\sigma_1^2-3\sigma_3^5\sigma_1^2-3\sigma_3^4\sigma_1^2-\sigma_2^4\sigma_1^2-3\sigma_3^3\sigma_1^2=0\\
-\sigma_1^2\sigma_1^2\sigma_1^2+4\sigma_2^4\sigma_2^4+8\sigma_2^4\sigma_2^3+4\sigma_2^3\sigma_2^3-6\sigma_3^6\sigma_1^2\\ \nonumber
-6\sigma_3^5\sigma_1^2-6\sigma_3^4\sigma_1^2-6\sigma_3^3\sigma_1^2+\sigma_1^2\sigma_1^2=0\\
-\sigma_2^4\sigma_2^4-2\sigma_2^4\sigma_2^3-\sigma_2^3\sigma_2^3+3/2\sigma_3^6\sigma_1^2+3/2\sigma_3^5\sigma_1^2\\ \nonumber
+3/2\sigma_3^4\sigma_1^2+1/2\sigma_2^4\sigma_1^2+3/2\sigma_3^3\sigma_1^2+1/2\sigma_2^3\sigma_1^2=0\\
-\sigma_3^6\sigma_1^2-\sigma_3^5\sigma_1^2-\sigma_3^4\sigma_1^2-\sigma_3^3\sigma_1^2+4\sigma_4^8+4\sigma_4^7+4\sigma_4^6+3\sigma_3^6+4\sigma_4^5+3\sigma_3^5+4\sigma_4^4+3\sigma_3^4+3\sigma_3^3=0\\
-\sigma_2^4\sigma_1^2-\sigma_2^3\sigma_1^2+3\sigma_3^6+3\sigma_3^5+3\sigma_3^4+2\sigma_2^4+3\sigma_3^3+2\sigma_2^3=0\\
-\sigma_1^2\sigma_1^2+2\sigma_2^4+2\sigma_2^3+\sigma_1^2=0.
\end{eqnarray}
\end{tiny}
\end{theorem}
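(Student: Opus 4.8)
The plan is to reduce each listed relation to a finite identity among basic graph invariants and then invoke the generality of the product formula to conclude that it holds for every graph. First I would write each parameter $\sigma_e^v$ occurring in the statement as an explicit linear combination of basic graph invariants: by definition $\sigma_e^v(\mathcal{G})=\sum_A I(A)(\mathcal{G})$, the sum over unlabelled graphs $A$ with $cv(A)=v$ and $|A|=e$, and for the small indices appearing here this is a short sum (often a single invariant, e.g.\ $\sigma_1^2=I(a_{12})$, $\sigma_2^4=I(a_{12}a_{34})$, $\sigma_2^3=I(a_{12}a_{13})$, and similarly for the handful of graphs with up to eight connected vertices and a prescribed edge count). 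Thus every monomial $\sigma_{e_1}^{v_1}\cdots\sigma_{e_m}^{v_m}$ appearing in a syzygy becomes a product of explicit linear combinations of basic invariants.

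Next I would expand these products into linear combinations of basic graph invariants by iterating the product formula (\ref{eq:29}) (equivalently Mnukhin's or Fleischmann's formula) factor by factor. The key observation making this finite and uniform is that every monomial in the list satisfies $\sum_i v_i\le 8$: if the product is formed as $(\cdots((I(g_1)I(g_2))I(g_3))\cdots)$, each partial product is a combination of $I(g)$ with $cv(g)$ bounded by the running sum $v_1+\cdots+v_k$, so at every multiplication step $cv(g)+v_{k+1}\le\sum_i v_i\le 8$ and the product formula of $\E(8)$ is already the general one. Carrying this out for a given syzygy produces a combination $\sum_k \mu_k I(g_k)$ with $g_k\in\E(8)$, and since the basic graph invariants are linearly independent, the syzygy holds identically exactly when every $\mu_k=0$ — a concrete finite check (this is the ``computer search in $\E(8)$'' mentioned above, performed with the $E$-transform of $\E(8)$ and the resulting structure constants).

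Finally I would settle generality. Because each product involved is entirely covered by the product formula of $\E(8)$, and that formula is exact and valid in $\E(n)$ for every $n$ once the relevant $cv$-sums are at most $n$ (it remains valid after adjoining all graphs, with graphs having too many connected vertices simply evaluating to zero), the vanishing of all $\mu_k$ forces the relation to hold whenever the $\sigma_e^v$ are evaluated on an arbitrary graph; hence the syzygies are general. The main obstacle is organizational rather than conceptual: one must build $\E(8)$, its $E$-transform, and the associated multiplication constants, and then verify that each listed rational combination of expanded monomials collapses to zero — a sizeable but purely mechanical computation. (If one also wants the parenthetical completeness claim, that up to this size these are essentially the only relations, one additionally computes a Gr\"obner-type basis for the ideal of relations among the $\sigma_e^v$ inside $\E(8)$, again a finite computation over the same data.)
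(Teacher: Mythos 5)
The paper offers no proof of this theorem at all: the list of syzygies is presented as the outcome of a computer search in $\mathcal{E}(8)$, with no argument for why relations verified on graphs with at most $8$ vertices hold identically for all graphs. Your proposal is correct and supplies exactly the missing justification. The two essential points you make — (i) every $\sigma_e^v$ in the list is an explicit, $n$-independent sum of basic invariants, and (ii) every monomial $\sigma_{e_1}^{v_1}\cdots\sigma_{e_m}^{v_m}$ in the list satisfies $\sum_i v_i\le 8$, so that each step of the iterated product formula (\ref{eq:29}) only ever produces invariants $I(g)$ with $cv(g)$ bounded by the running sum and is therefore already the \emph{general} product formula, valid in $\mathcal{E}(n)$ for every $n$ — together reduce each claimed identity to the vanishing of finitely many $n$-independent structure constants, which the triangularity (hence invertibility) of the $E$-transform of $\mathcal{E}(8)$ lets one read off from the values on $\mathcal{E}(8)$ alone. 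This turns the paper's bare assertion into a theorem: the computation is the same one the author presumably ran, but your argument is what licenses promoting its output from ``relations observed in $\mathcal{E}(8)$'' to ``general syzygies.'' The only caveat is that the parenthetical uniqueness claim (``these are the only ones'') is not established by this argument and would indeed require the additional ideal-membership computation you mention at the end.
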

Since most of the products  $\sigma_{e_1}^{v_1}\sigma_{e_2}^{v_2}$ are not
closed under multiplication in the parameters $\sigma_e^v$, the Ramsey problem
for instance cannot be necessarily solved purely in terms of these
parameters. Moreover in order to calculate the required syzygies for $r(5)$ we
need calculations in $\E(10)$ to ensure that the products $\sigma_{e_1}^5\sigma_{e_2}^5$ are covered. 
Our implementation of the required algorithms seems to consume several gigabytes of memory thus exceeding today's desktop computers' capabilities.

\section{Open Problems}
We have shown that $\E(4)$ is strong enough to prove $r(3)\leq 6$ and $\E(5)$ is too weak for proving $r(4)\leq 18$. This leads to the following questions.

\newtheorem{op1}{Problems}[section]
\begin{op1}
Solve the following questions:
\begin{itemize}
\item[i] How large an $G$-poset $\E(r)$ is required to find an upper bound for $r(k)$?
\item[ii] Can you solve $r(4)$ and perhaps $r(5)$ by utilizing the local
  parameters $z(i)$, $z(i_1,i_2)$, $\ldots$, $z(i_1,\ldots,i_k)$ to give stronger constraints?
\item[iii] By using Theorem \ref{the:graafisuus1}, is it possible to find new lower
  bounds to Ramsey numbers by reconstruction?
\end{itemize}
\end{op1}
The parameters in the Ramsey problem can be reduced to polynomial size
by introducing the $\sigma_{e}^v$-polynomials. However the structure of the inequalities
is not easy. Let $E_{\sigma}$ be the evaluated values for the vector
\begin{equation}
\sigma=[\sigma_{1}^2,\sigma_{2}^{3},\sigma_3^3,\ldots,\sigma_{{r \choose 2}}^r]
\end{equation}
over the $G$-poset $\E(r)$. Then similarly to Proposition \ref{pro:kat}, we have 
\begin{equation}
\forall c \in \mathbb{R}^{m} \ \mathrm{s.t.} \  E_\sigma c \leq 0 : c^T D
\sigma \leq 0.
\end{equation}
Since $E_\sigma$ is not a square matrix, we cannot find easily characterization to
weakly graphic vectors $\sigma$.

\newtheorem{op2}[op1]{Problem}
\begin{op2}
Find inequalities and general syzygies for the $\sigma_e^v$ parameters or
alternatively for the power sums $h_e^v$.
\end{op2}


\begin{thebibliography}{99}

\bibitem{int1}
A. I. Barvinok,
A Polynomial Time Algorithm for Counting Integral Points in Polyhedra when the Dimension Is Fixed,
Math. Oper. Res., 19, pp. 769-779, (1994).

\bibitem{Bondy}
Bondy J. A.,
Counting subgraphs - a new approach to Cacetta-H$\mathrm{\ddot{a}}$ggvist conjecture, Discrete
Math, 165/166, pp. 71-80 (1997).

\bibitem{Buchwalder}
Buchwalder, X.
Sur les sous-graphes d'un graphe et la conjecture de reconstruction,
Memoire de Master de recherche 2eme annee, (2005).

\bibitem{Chrobak}
M. Chrobak, 
Reconstructing Polyatomic Structures From Discrete X-rays:NP-Completeness
Proof for Three Atoms, 
Theoretical Computer Science, 259, pp. 81-98, (2001).

\bibitem{Cohen}
H. Cohen,
A Course in Computational Algebraic Number Theory,
Springer-Verlag Berlin Heidelberg (1993).

\bibitem{Fleischmann}
P. Fleischmann,
A New Degree Bound for Vector Invariants of Symmetric Groups,
Trans. Amer. Math. Soc. {\textbf 305}, pp. 1703-1712, (1998).

\bibitem{Hakimi}
Hakimi, S. 
On the Realizability of a Set of Integers as Degrees of the Vertices of a
Graph., SIAM J. Appl. Math. 10, pp. 496-506, (1962).
 
\bibitem{Havel}
Havel, V. 
A Remark on the Existence of Finite Graphs, [Czech], Casopis Pest. Mat. 80, pp. 477-480, (1955).

\bibitem{Harary}
F. Harary,
Graph Theory,
Addison-Wesley Publishing Company, Inc. (1969).

\bibitem{Kocay}
W. L. Kocay,
Some new methods in reconstruction theory,
Combinatorial Mathematics, IX, Brisbane, Springer Berlin, pp. 89-114 (1982).

\bibitem{Kocay2}
W. L. Kocay,
On reconstructing spanning subgraphs,
Ars Combinbinatoria, 11, pp. 301-313 (1981).

\bibitem{McKay}
B. McKay,
Nauty - program for isomorphism and automorphism of graphs.
http://cs.anu.edu.au/people/bdm/

\bibitem{Pouzet}
M. Pouz$\mathrm{\acute{e}}$t, N. M. Thi$\mathrm{\acute{e}}$ry,
Invariants Alg$\mathrm{\acute{e}}$briques de graphes. {\em Comptes Rendus de l'Academie des
  Sciences} 3330 (9), pp. 821-826 (2001).

\bibitem{Thiery}
N. M. Thi$\mathrm{\acute{e}}$ry,
Albebraic invariants of graphs; a study based on computer exploration, 
SIGSAM Bulletin, 34 (3), 9-20 (2000).

\bibitem{Tomi3}
T. Mikkonen,
The Ring of Graph Invariants - Upper and Lower Bounds for Minimal Generators,
Graphs and Combinatorics, (submitted).

\bibitem{Tomi4}
T. Mikkonen,
The Ring of Graph Invariants,
Thesis, to appear.
\end{thebibliography}
\end{document}